\def\NN{\mathbb{N}}
\def\RR{\mathbb{R}}
\newcommand{\bfm}[1]{\bm{#1}}
\newcommand{\mc}[1]{\mathcal{#1}}
\newcommand{\wt}[1]{\widetilde{#1}}
\newcommand{\defset}[2]{\left\{ #1: \ #2 \right\}}
\newcommand{\sprod}[2]{\left\langle#1, #2\right\rangle}
\newcommand{\norm}[1]{\| #1 \|}
\newcommand{\C}[1]{{C}^{#1}}
\newcommand{\G}[1]{{G}^{#1}}
\newcommand{\bE}{\boldsymbol{\mc{E}}}
\newcommand{\pd}{d}
\newcommand{\pdt}{\pd_{\td}}
\newcommand{\pdw}{\pd_{\w}}
\newcommand{\cF}{\bfm{C}}
\newcommand{\poly}{\mathbb{P}}
\newcommand{\bern}[2]{B^{#1}_{#2}}
\newcommand{\bernT}[2]{B^{\mbox{{\tiny{$\triangle$}}}, #1}_{#2}}
\newcommand{\bernB}[2]{{B}^{#1}_{#2}}
\newcommand{\w}{\omega}
\newcommand{\td}{\tau}
\newcommand{\tr}{\theta}
\newcommand{\dr}[1]{#1'}
\newcommand{\q}{\gamma}
\newcommand{\set}[1]{\left\{ #1 \right\}}
\newcommand{\parDomT}{\triangle_0}
\newcommand{\parDomQ}{\Box_0}
\newcommand{\parDer}[2]{D_{#1}^{#2}}
\newcommand{\jac}[1]{\mbox{J} {#1}}
\newcommand{\orth}[1]{{#1}^{\perp}}
\newcommand{\cf}{{\rm cf}}
\newtheorem{theorem}{Theorem}
\newtheorem{proposition}[theorem]{Proposition}
\newtheorem{lemma}[theorem]{Lemma}
\newtheorem{corollary}[theorem]{Corollary}
\newtheorem{remark}{Remark}
\newtheorem{example}{Example}
\journal{}
\begin{document}

\begin{frontmatter}

\title{$\C{1}$-smooth isogeometric spline functions of general degree over planar mixed meshes: The case of two quadratic mesh elements} 

\author[UL,UR]{Jan Gro\v{s}elj}
\ead{jan.groselj@fmf.uni-lj.si}
\author[CUAS]{Mario Kapl}
\ead{m.kapl@fh-kaernten.at}
\author[UL,UR]{Marjeta Knez} 
\ead{marjetka.knez@fmf.uni-lj.si}
\author[RICAM]{\corref{cor}Thomas Takacs}
\ead{thomas.takacs@ricam.oeaw.ac.at}
\author[UP]{Vito Vitrih}
\ead{vito.vitrih@upr.si}
\address[UL]{FMF, University of Ljubljana, Jadranska 19, 1000 Ljubljana, Slovenia}
\address[UR]{IMFM, Jadranska 19, 1000 Ljubljana, Slovenia} 
\address[CUAS]{ADMiRE Research Center, Carinthia University of Applied Sciences, Europastra\ss{}e 4, 9524 Villach, Austria}
\address[RICAM]{Johann Radon Institute for Computational and Applied Mathematics, Austrian Academy of Sciences, Altenberger Str. 69, 4040 Linz, Austria}
\address[UP]{UP FAMNIT and UP IAM, University of Primorska, Glagolja\v ska 8, 6000 Koper, Slovenia}

\cortext[cor]{Corresponding author.}

\begin{abstract}
Splines over triangulations and splines over quadrangulations (tensor product splines) are two common ways to extend bivariate polynomials to splines. However, combination of both approaches leads to splines defined over mixed triangle and quadrilateral meshes using the isogeometric approach. Mixed meshes are especially useful for representing complicated geometries obtained e.g. from trimming. As (bi-)linearly parameterized mesh elements are not flexible enough to cover smooth domains, we 
focus in this work on the case of planar mixed meshes parameterized by (bi-)quadratic geometry mappings. In particular we study in detail the space of $\C{1}$-smooth isogeometric spline functions of general polynomial degree over two such mixed mesh elements. We present the theoretical framework to analyze the smoothness conditions over the common interface for all possible configurations of mesh elements. This comprises the investigation of the dimension as well as the construction of a basis of the corresponding $\C{1}$-smooth isogeometric spline space over the domain described by two elements. Several examples of interest are presented in detail.
\end{abstract}

\begin{keyword}
Isogeometric analysis \sep $\C{1}$-smoothness \sep $\C{1}$ space \sep mixed triangle and quadrilateral mesh \sep quadratic triangle \sep biquadratic quadrilateral 
\end{keyword}

\end{frontmatter}

\section{Introduction}
\label{sec:intro}

Planar triangle or quadrilateral meshes are two common concepts for the modeling of the geometry of complicated planar domains. To solve fourth order partial differential equations (PDEs) such as the biharmonic equation, e.g.~\cite{TaDeQu14}, the Kirchhoff--Love shell problem, e.g.~\cite{KiBlLi09,KiBaHs10}, problems of strain gradient elasticity, e.g.~\cite{gradientElast2011,KhakaloNiiranenC1}, or the Cahn--Hilliard equation, e.g.~\cite{GoCaBa08}, via their weak form and Galerkin discretization over these meshes, globally $\C{1}$-smooth functions are needed. The construction of globally $\C{1}$-smooth spaces over triangle or quadrilateral meshes has been of interest since the origin of the finite element method (FEM) and has gained even more importance since the introduction of isogeometric analysis (IGA)~\cite{HuCoBa05,CoHuBa09}. Many of the developed methods, in particular in the framework of IGA, employ the fact that a function is $\C{1}$-smooth over a given mesh if and only if the associated graph surface is $\G{1}$-smooth~\cite{GrPe15}, i.e. possessing a uniquely defined tangent plane at each point~\cite{Pe02}. To simplify the construction and to make it independent of the geometry of the mesh, some existing approaches require that the constructed $\C{1}$-smooth functions are additionally $\C{2}$-smooth at the vertices.     
 
Two first $\C{1}$-smooth triangular finite elements have been the Argyris element~\cite{ArFrSc68} and the Bell element~\cite{Be69}, see also~\cite{Ci02,BrSc07}, where in both cases, $\C{1}$-smooth splines over linearly parameterized triangles are constructed, which are polynomial functions of degree~$\pd \geq 5$ on the individual triangles. The construction of $\C{1}$-smooth triangular spline spaces of lower polynomial degree often relies on the use of triangle meshes with specific configurations or splitting of the triangles, cf. the book~\cite{LaSc07}. Examples of recently developed $\C{1}$-smooth triangular splines are~\cite{GrMa22,GrSp21,SpMaPeSa12,Sp13,JaQi14}.
   
A first quadrilateral $\C{1}$-smooth finite element construction over bilinear meshes has been the Bogner--Fox--Schmit element~\cite{BoFoSc65}, which works for polynomial degree~$\pd \geq 3$, but which is limited to tensor-product meshes. Examples of $\C{1}$-smooth finite elements over general bilinear quadrilateral meshes are the Brenner--Sung element~\cite{BrSu05} for $\pd \geq 6$ and the constructions~\cite{Ma01,BeMa14,KaSaTa20} for $\pd \geq 5$. While the methods~\cite{BrSu05,KaSaTa20} generate $\C{1}$-smooth spline functions which are additionally $\C{2}$-smooth at the vertices, the obtained spline functions in~\cite{Ma01,BeMa14} are in general just $\C{1}$-smooth everywhere.

In the framework of IGA, $\C{1}$-smooth spline spaces over quadrilateral meshes are generated, where the individual quadrilateral patches need not be bilinearly parameterized. Depending on the employed multi-patch parameterization of the considered planar quadrilateral mesh, different strategies for the construction of $\C{1}$-smooth spline spaces have been developed, cf. the survey articles~\cite{KaSaTa18,HuSaTaTo21}. Examples of proposed parameterizations are $\C{1}$-smooth parameterizations with singularities~\cite{NgPe16,ToSpHu17} or $\G{1}$-caps~\cite{KaNgPe17,KaPe17,KaPe18} at the extraordinary vertices, analysis-suitable $\G{1}$ multi-patch parameterizations~\cite{KaViJu15,KaBuBeJu16,KaSaTa17,KaSaTa19}, which form a particular class of regular $\C{0}$ multi-patch geometries~\cite{CoSaTa16,KaSaTa17b}, or other general multi-patch configurations~\cite{ChAnRa18}.

The recent paper~\cite{GrKaKnTaVi2020} deals with the construction of $\C{1}$-smooth spline spaces over planar mixed triangle and quadrilateral meshes. The use of mixed triangle and quadrilateral meshes is of high practical relevance, since they appear in and can be advantageous for many applications. One important example is the untrimming of trimmed tensor-product splines. There, mixed meshes are beneficial in representing the geometry, cf.~\cite{To22}. The technique~\cite{GrKaKnTaVi2020} is based on a mixed mesh, where the individual triangles and quadrilaterals are linearly and bilinearly parameterized, respectively. It generates $\C{1}$-smooth splines, which are polynomial functions of degree~$\pd \geq 5$ on the single element. A further recent construction of smooth spline spaces over mixed triangle and quadrilateral meshes is the work~\cite{To22}. However, there the spline spaces of polynomial degree~$\pd=2$ are just $\C{0}$-smooth in the vicinity of extraordinary vertices. For the case of purely quadrilateral meshes, the construction from~\cite{To22} has recently been extended in~\cite{TaTo23} to splines that are $\C{1}$ at all extraordinary vertices but still remain $C^0$ in the neighborhood of extraordinary vertices.

The goal of this paper is to study the $\C{1}$-smoothness conditions over mixed planar partitions composed of B\'ezier triangles and tensor-product B\'ezier quadrilaterals of (bi-)degree~$\delta \geq 1$ in an isogeometric setting. This extends the work~\cite{GrKaKnTaVi2020}, where (bi-)linearly parameterized elements have been considered. Over the considered elements of (bi-)degree~$\delta$, mapped polynomial function spaces of some degree~$\pd \geq \delta$ can be defined. A similar study has been performed in~\cite{MoViVi16}, where a general dimension formula and basis construction is presented for $\C{1}$-smooth splines over mixed triangle and quadrilateral meshes. While one can easily generate $\C{0}$-smooth isogeometric spline functions over such partitions, the dimension count and basis constructions for $\C{1}$-smooth spaces become highly nontrivial and, as developed in~\cite{MoViVi16}, requires the computation of generators of syzygy modules for each edge. The respective syzygies are defined through the gluing data of the edge. Being an algebraic approach, it is very general and covers, in principle, any element segmentation and any combination of degrees $\delta$ and $d$. However, in such a setting it is difficult to analyze how the geometry, i.e., the element parameterizations, influences the dimension of the $C^1$-smooth space and its basis structure. In our work we want to provide more geometric insight and especially want to focus on the local polynomial reproduction properties of the space.

In this paper, we focus on a single interface between two elements, which are allowed to be triangular or quadrilateral. Firstly, the space of $\C{1}$-smooth isogometric spline functions defined on two elements of general (bi-)degree~$\delta$ is considered. We investigate the $\C{1}$-smoothness conditions of the functions across the interface of the two elements and analyze their representation in the vicinity of the interface, where we focus on conditions related to the trace and normal derivative along the interface. We then restrict ourselves to the case of quadratic triangles and biquadratic quadrilaterals, i.e., to the case of $\delta=2$, where we further study the structure of the corresponding $\C{1}$-smooth isogeometric spline space, determining its dimension and providing a basis construction for it. We aim at an exhaustive representation covering all cases. While this is of theoretical interest in itself, the study of polynomial reproduction properties of traces and normal derivatives has several practical implications for meshing and refinement, and will, in future research, serve as the basis for the construction and numerical analysis of $\C{1}$-smooth isogeometric spline spaces over partitions composed of multiple elements. A possible application is then to generate $\C{1}$-smooth isogeometric spline spaces over mixed triangle and quadrilateral meshes, that are obtained by untrimming, and to use the resulting spaces to solve fourth order PDEs.

The remainder of the paper is organized as follows. Section~\ref{sec:C1continuity} introduces the class of planar mixed meshes composed of two mesh elements which can be B\'{e}zier triangles and {B\'{e}zier} quadrilaterals of (bi-)degree~$\delta$. We define the associated $\C{1}$-smooth isogeometric spline space and study the $\C{1}$-smoothness condition of an isogeometric spline function across the interface of the two mesh elements. In Section~\ref{sec:quadratic-elements} we restrict ourselves to element mappings of degree two, that is, to quadratic triangles and biquadratic quadrilaterals, and study the specific smoothness conditions over the two mesh elements for all possible cases. The obtained results are summarized in Section~\ref{sec:basis}, where the dimension of the $\C{1}$-smooth isogeometric spline space is presented and a basis is constructed. Section~\ref{sec:examples} further presents several examples of different configurations of the two mesh elements and illustrates the corresponding $\C{1}$-smooth isogeometric basis functions over them. Finally, we conclude the paper in Section~\ref{sec:Conclusion}. Concerning the notation and basic concepts, we mostly follow the recent paper~\cite{GrKaKnTaVi2020}.

\section{$\C{1}$-smooth isogeometric functions over two mixed (triangular and quadrilateral) elements} \label{sec:C1continuity}

In the following we introduce the isogeometric spaces that we consider in this paper and derive the $\C{1}$-smoothness conditions for those spaces. We then describe the $\C{1}$-smoothness in terms of conditions on the traces and normal derivatives for a fixed element interface.

\subsection{Isogeometric space over two mixed elements} 

Let $\Omega \subset \RR^2$ be  a given  open 
domain with a $\C{0}$ boundary, such that its  closure~$\overline{\Omega}$ is the 
union of the closures of triangular or quadrilateral elements~$\Omega^{(\ell)}$,  $\ell \in \{1,2\}$.
The elements are assumed to be open sets having the bijective and regular parameterizations
\[
\bfm{F}^{(\ell)}: \mathcal{D}^{(\ell)} \to \overline{\Omega^{(\ell)}},
\]
where $\mathcal{D}^{(\ell)}$ equals
\[
\parDomT :=\defset{(u,v)\in \RR^2}{u\in \left[0,1\right], \; 0 \leq v \leq 1-u} 
\quad \mbox{ or } \quad \parDomQ:=\left[0,1\right]^2
\]
for the triangular and quadrilateral elements, respectively.
Let us denote by $\poly^1_\delta$, $\poly^2_\delta$ and $\poly^2_{\delta,\delta}$ the univariate, triangle and tensor-product polynomial spaces of (bi-)degree $\delta$, respectively. We assume
\begin{equation} \label{eq:geometry_mapping}
\bfm{F}^{(\ell)} \in
 \begin{cases}
    (\poly^2_{\delta,\delta})^2 & \mbox{ if }    
    \mathcal{D}^{(\ell)}= \parDomQ \\
    (\poly^2_\delta)^2 & \mbox{ if }
    \mathcal{D}^{(\ell)}= \parDomT .
 \end{cases}
\end{equation}
Let $\bE = (\overline{\Omega^{(1)}}\cap \overline{\Omega^{(2)}})^\circ$ be the common interface, and, without loss of generality, we assume it equals
$
\bE = \left\{\bfm{F}^{(1)}(0,t) = \bfm{F}^{(2)}(0,t): t\in \left(0,1\right) \right\}.
$
Here the notation $(\cdot)^\circ$ denotes the edge without endpoints, i.e., the open curve segment interpreted as a one dimensional manifold.

We define a function $\varphi$ over the union of the two elements $\overline\Omega = \overline{\Omega^{(1)}} \cup \overline{\Omega^{(2)}}$ as 
\[ 
\varphi:\overline\Omega \to \RR, \quad
\varphi(x,y) = \begin{cases}
\varphi^{(1)}(x,y), & (x,y)\in \overline{\Omega^{(1)}}\\
\varphi^{(2)}(x,y), & (x,y)\in \overline{\Omega^{(2)}}\setminus \overline{\bE}
\end{cases},
\]
and consider its graph $\Phi \subset \overline\Omega \times \RR$ 
as the union of two patches given by parameterizations
\[
\Phi^{(1)} := \begin{bmatrix}
\bfm{F}^{(1)}\\
f^{(1)}
\end{bmatrix}: \mathcal{D}^{(1)} \to \RR^3,\quad 
\Phi^{(2)} := \begin{bmatrix}
\bfm{F}^{(2)}\\
f^{(2)}
\end{bmatrix}: \mathcal{D}^{(2)} \to \RR^3,
\]
where $f^{(\ell)} = \varphi^{(\ell)} \circ \bfm{F}^{(\ell)}$. 
The isogeometric space $\mathcal{V}_\pd(\Omega)$ of degree $\pd \geq \delta$ over the domain $\Omega$ is defined as 
\[
 \mathcal{V}_\pd(\Omega) := \left\{ \varphi : \overline{\Omega}\rightarrow \RR , \quad  \varphi \circ \bfm{F}^{(\ell)} = f^{(\ell)} \in
 \begin{cases}
\poly^2_{\pd,\pd} & \mbox{ if } 
\mathcal{D}^{(\ell)}= \parDomQ\\
\poly^2_{\pd} & \mbox{ if } 
\mathcal{D}^{(\ell)}= \parDomT
 \end{cases}, \mbox{ for }\ell \in \{1,2\}
 \right\}.
\]
The space $\mathcal{V}_\pd(\Omega)$ can, in principle, be defined for any (bi-)degree $\pd$. However, it is considered \emph{isogeometric} and reproduces, in general, linear functions only if $\pd\geq \delta$. We define the $\C{1}$-smooth isogeometric space to be $\mathcal{V}^1_d(\Omega) := \mathcal{V}_d(\Omega) \cap \C{1}(\overline{\Omega})$. In the following we study the continuity conditions that describe this subspace.

\subsection{Continuity conditions} 

It is well known (\cite{KaViJu15,GrPe15,CoSaTa16}) that along the common interface the function $\varphi$ is $\C{1}$ continuous if and only if its graph $\Phi$ is $\G{1}$ continuous. The later is true if and only if
\begin{equation} \label{eq:G1cond-1}
\Phi^{(1)}(0,v) = \Phi^{(2)}(0,v), \quad \det
\left[
\parDer{u}{}{\Phi^{(2)}}(0,v),\,  \parDer{u}{}{\Phi^{(1)}}(0,v),\,  \parDer{v}{}{\Phi^{(1)}}(0,v)
\right]= 0.
\end{equation}
We introduce the {\it gluing functions} for the interface $\bE$,
\begin{equation} \label{def-alpha}
\wt{\alpha}_1(v):=  \det \jac{\bfm{F}^{(1)}}(0,v), \quad 
\wt{\alpha}_2(v):=  \det \jac{\bfm{F}^{(2)}}(0,v), \quad
\alpha(v):= \det\left[\parDer{u}{}{\bfm{F}^{(2)}}(0,v), \, \parDer{u}{}{\bfm{F}^{(1)}(0,v)}
\right].
\end{equation}
Here $\jac{\bfm{F}} = [\parDer{u}{}{\bfm{F}},\parDer{v}{}{\bfm{F}}]$ denotes the Jacobian of the mapping $\bfm{F}$.
Let $\q = {\rm gcd}(\wt{\alpha}_1,\wt{\alpha}_2)$ be the (polynomial) greatest common divisor of polynomials $\wt{\alpha}_1$ and $\wt{\alpha}_2$, and let
\[
\alpha_\ell := \frac{1}{\q} \wt{\alpha}_\ell, \quad \ell \in \{1,2\}.
\]
Since the polynomial gcd is not unique, we assume without loss of generality $\q(0)=1$. Then, condition \eqref{eq:G1cond-1} is equivalent to
\begin{linenomath}
\begin{align} 
& f^{(1)}(0,v) = f^{(2)}(0,v), \label{eq:geom-cont-0}\\
& \q(v){\alpha}_1(v) \parDer{u}{}{f^{(2)}}(0,v) - \q(v){\alpha}_2(v) \parDer{u}{}{f^{(1)}}(0,v) + \alpha(v) \parDer{v}{}{f^{(1)}}(0,v) = 0,\label{eq:geom-cont}
\end{align}
\end{linenomath}
where~\eqref{eq:geom-cont} follows from performing the Laplace expansion of the determinant in~\eqref{eq:G1cond-1} along the last row. 
Note that $\parDer{v}{}{f^{(1)}}(0,v)=\parDer{v}{}{f^{(2)}}(0,v)$ and that $\q(v)\neq 0$, ${\alpha}_\ell(v) \neq 0$, for $v\in [0,1]$, $\ell \in \{1,2\}$. 

Along the interface $\bE$ we define the vector-valued function 
\[
\bfm{n}:[0,1]\to \RR^2, \quad \bfm{n}(v)= \orth{\left(\parDer{v}{}{\bfm{F}^{(1)}} (0,v)\right)},
\]
where $\orth{(x,y)} := (y,-x)$, 
which prescribes the direction vector at every point of the interface (in the direction of the normal). By defining 
\begin{equation}\label{def-beta}
\beta(v) = \norm{\bfm{n}(v)}^2, \quad 
{\beta_\ell(v) = \sprod{\orth{(\parDer{u}{}{\bfm{F}^{(\ell)}} (0,v))}}{\bfm{n}(v)}, \quad \ell \in \{1,2\},}
\end{equation}
and using \cite[Lemma 1]{GrKaKnTaVi2020}, we get that
\begin{equation}\label{rel-alpha-beta}
\beta(v) \alpha(v) = \q(v) \alpha_2(v) \beta_1(v) - \q(v) \alpha_1(v) \beta_2(v).
\end{equation}
This equality can also be directly checked by noting that 
\begin{equation}\label{def-alpha-1}
\q(v)\alpha_\ell(v) = \det \jac{\bfm{F}^{(\ell)}}(0,v) = \sprod{{\parDer{u}{}{\bfm{F}^{(\ell)}} (0,v)}}{\orth{\left(\parDer{v}{}{\bfm{F}^{(1)}} (0,v)\right)}} = 
\sprod{{\parDer{u}{}{\bfm{F}^{(\ell)}} (0,v)}}{\bfm{n}(v)},
\end{equation}
for $\ell \in \{1,2\}$. With this equality equation \eqref{eq:geom-cont} can be rewritten to obtain
\begin{equation}\label{eq:geom-cont-1}
\alpha_2(v)\left(\beta(v) \parDer{u}{}{f^{(1)}} (0,v)-\beta_1(v) \parDer{v}{}{f^{(1)}} (0,v)\right)=
\alpha_1(v)\left(\beta(v) \parDer{u}{}{f^{(2)}} (0,v)-\beta_2(v) \parDer{v}{}{f^{(2)}} (0,v)\right).
\end{equation}
Moreover, in \cite[Lemma 2]{GrKaKnTaVi2020} it is proven that for a fixed vector $\bfm{d}$ the directional derivative 
$\parDer{\bfm{d}}{}{\varphi^{(\ell)}}(x,y)$ at a point $(x,y) = \bfm{F}^{(\ell)}(u,v)$ is in local coordinates equal to
\[
\begin{split}
{\w}_{\bfm{d}}^{(\ell)}(u,v) & :=  \sprod{\bfm{d}}{ 
\bfm{G}^{(\ell)}(u,v)
},\\
 \bfm{G}^{(\ell)}(u,v)  &:=  \frac{1}{\det \jac{\bfm{F}^{(\ell)}}(u,v)} \left(
\parDer{u}{}{f^{(\ell)}(u,v)} \orth{\left(\parDer{v}{}{\bfm{F}^{(\ell)}}(u,v) \right)}
- \parDer{v}{}{f^{(\ell)}(u,v)} \orth{\left(\parDer{u}{}{\bfm{F}^{(\ell)}}(u,v) \right)}
\right).
\end{split}
\]
We choose $\bfm{d}$ to be the normal vector~$\bfm{n}(v)$, which is orthogonal to the interface at every point and therefore depends on $v$. 
From \eqref{def-alpha} and \eqref{def-beta} it then follows that along the interface, the normal derivative rewrites to  
\begin{equation}\label{equal-wn-0}
{\w}_{\bfm{n}(v)}^{(\ell)}(0,v)  = \frac{1}{\q(v) \alpha_{\ell}(v)}\left(\beta(v) \parDer{u}{}{f^{(\ell)}} (0,v)-\beta_\ell(v) \parDer{v}{}{f^{(\ell)}} (0,v)\right),
\end{equation}
so the $\G{1}$ continuity condition \eqref{eq:geom-cont-1} equals 
\begin{equation} \label{equal-wn}
{\w}_{\bfm{n}(v)}^{(1)}(0,v) = {\w}_{\bfm{n}(v)}^{(2)}(0,v).
\end{equation}
From~\eqref{eq:geom-cont-1} as well as from~\eqref{equal-wn} we see that the polynomial
$ \beta \parDer{u}{}{f^{(\ell)}} (0,\cdot)-\beta_\ell \parDer{v}{}{f^{(\ell)}} (0,\cdot) $ must be divisible by $\alpha_\ell$, and the common normal derivative \eqref{equal-wn} must be a (rational) function of the form 
\begin{equation}\label{eq-w-tilde}
  {\w}_{\bfm{n}} := \frac{\wt{\w}_{\bfm{n}}}{\q}, \qquad \wt{\w}_{\bfm{n}}(v):=\frac{1}{\alpha_{\ell}(v)}\left(\beta(v) \parDer{u}{}{f^{(\ell)}} (0,v)-\beta_\ell(v) \parDer{v}{}{f^{(\ell)}} (0,v)\right), \quad \ell=1,2.
\end{equation}
In what follows, to simplify the terminology, we use the term \emph{normal derivative} also for the $\q$-scaled (polynomial) normal derivative, which we denote, for simplicity, by
\[
 \w(v):={\wt{\w}}_{\bfm{n}}(v) = \q(v)\;{\w}_{\bfm{n}}(v).
\]
Moreover, we denote by 
\[ 
 \tr(v):= f^{(1)} (0,v) = f^{(2)} (0,v)
\]
the trace and by
\begin{equation}\label{eq:def-tau}
 \td(v):= \dr{\tr}(v) = \parDer{v}{}{f^{(1)}} (0,v) = \parDer{v}{}{f^{(2)}} (0,v)
\end{equation}
the tangential derivative of the isogeometric function $\varphi$ along the trace.

To distinguish between the triangular and quadrilateral elements and to simplify further notation, we define
\[
 \sigma_\ell := 
 \begin{cases}
    1 & \mbox{ if } 
    \mathcal{D}^{(\ell)}= \parDomQ\\
    0 & \mbox{ if }
    \mathcal{D}^{(\ell)}= \parDomT.
 \end{cases}
\]
Note that, by definition, the degree of $\tr$ is bounded by $\pd$ and the degree of $\w$ cannot exceed $d+2\delta-2$. This is due to
\[
 \deg(\alpha_\ell) \leq 2\delta - 2+\sigma_\ell, \quad \deg(\beta_\ell) \leq 2\delta - 2+\sigma_\ell, \; \mbox{ and } \; \deg(\beta) \leq 2\delta - 2,
\]
and the fact that the degree of $\w$ is bounded by the degree of the numerator in~\eqref{equal-wn-0}. In the following theorem we characterize the $\C{1}$ conditions in terms of the gluing functions.
\begin{theorem} \label{main-thm-1}
Let $\varphi \in \mathcal{V}_\pd(\Omega)$ be an isogeometric function. Let $\tr \in \poly^1_\pd$ and $\w \in \poly^1_{\pd+2\delta-2}$ be given polynomials that determine the trace function and the normal derivative. We define the polynomial functions
\[
r_\ell := \alpha_{\ell} \,\w + \beta_\ell \, \td,\quad \ell \in \{1,2\},
\]
where $\td=\dr{\tr}$ as defined in~\eqref{eq:def-tau}. Then $\varphi$ is $\C{1}$-smooth if and only if 
\begin{description}
\item[A1:]
$\beta$ divides the polynomials $r_1$ and $r_2$, i.e., there exist polynomials $\eta_\ell$, such that
\[
 r_\ell = \beta \, \eta_\ell, \quad \mbox{ for }\;\ell\in\{1,2\};
\]
\item[A2:]
the degrees of the polynomials $r_1$ and $r_2$ are bounded by 
\[
\deg(r_\ell) \leq d-1+\sigma_\ell + \deg(\beta), \quad \mbox{ for }\; \ell\in\{1,2\};
\]
\end{description}
and that under these conditions the functions $f^{(\ell)} = \varphi \circ \bfm{F}^{(\ell)}$, for $\ell \in \{1,2\}$, satisfy
\begin{equation}\label{eq:expansion-f-ell}
 f^{(\ell)} (u,v) = \tr(v) + u \cdot \eta_\ell(v) + u^2 \cdot R_\ell(u,v),
\end{equation}
where $R_\ell(u,v) \in \poly^2_{\pd-2,\pd}$, if $\mathcal{D}^{(\ell)}= \parDomQ$, or $R_\ell(u,v) \in \poly^2_{\pd-2}$, if $\mathcal{D}^{(\ell)}= \parDomT$. Thus, we have
\begin{equation}\label{eq-D0}
 D_0 :=  \binom{\pd}{2} (2-\sigma_1-\sigma_2) + (\pd-1)(\pd+1)(\sigma_1+\sigma_2)
\end{equation}
degrees of freedom that have no influence on the $\C{1}$ condition at the interface.
\end{theorem}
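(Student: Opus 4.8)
The plan is to extract the whole statement from the single identity relating the (scaled) normal derivative to $\parDer{u}{}{f^{(\ell)}}$ on the interface, and then to organize each $f^{(\ell)}$ by its Taylor expansion in $u$ at $u=0$. Concretely, I would start from \eqref{eq-w-tilde} (equivalently \eqref{eq:geom-cont-1}), multiply by $\alpha_\ell$, and use $\parDer{v}{}{f^{(\ell)}}(0,\cdot)=\td$ to arrive at $\beta\,\parDer{u}{}{f^{(\ell)}}(0,\cdot)=\alpha_\ell\,\w+\beta_\ell\,\td=r_\ell$ for $\ell\in\{1,2\}$. Since $\beta$ and $\alpha_\ell$ are nonvanishing on $[0,1]$, this rearrangement is legitimate and shows that the gluing data force $\parDer{u}{}{f^{(\ell)}}(0,\cdot)=r_\ell/\beta$; the content of the theorem is then precisely that this quotient must be a genuine polynomial of admissible degree, realizable as the interface value of $\parDer{u}{}{f^{(\ell)}}$ for some $f^{(\ell)}$ in the prescribed space.

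For the forward implication I would assume $\varphi$ is $\C{1}$, so the trace condition $f^{(1)}(0,\cdot)=f^{(2)}(0,\cdot)=\tr$ and the matching \eqref{equal-wn} hold and the identity above is valid. Because $\eta_\ell:=\parDer{u}{}{f^{(\ell)}}(0,\cdot)$ is an honest polynomial, $r_\ell=\beta\,\eta_\ell$ follows, which is exactly \textbf{A1}. For \textbf{A2} I would bound $\deg\eta_\ell$: it is at most $\pd$ in $v$ when $\mathcal{D}^{(\ell)}=\parDomQ$ (the $v$-degree of $f^{(\ell)}$) and at most $\pd-1$ when $\mathcal{D}^{(\ell)}=\parDomT$ (the $u$-derivative lowers the total degree by one), i.e.\ $\deg\eta_\ell\le \pd-1+\sigma_\ell$, whence $\deg r_\ell\le \pd-1+\sigma_\ell+\deg\beta$. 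For the converse I would reverse these steps: given \textbf{A1} and \textbf{A2}, the polynomial $\eta_\ell:=r_\ell/\beta$ satisfies $\deg\eta_\ell\le \pd-1+\sigma_\ell$, and prescribing $f^{(\ell)}(0,\cdot)=\tr$ together with $\parDer{u}{}{f^{(\ell)}}(0,\cdot)=\eta_\ell$ restores both the trace condition and, by back-substitution into \eqref{eq-w-tilde}, the common normal derivative $\w$, hence $\G{1}$- and therefore $\C{1}$-smoothness.

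Finally I would establish \eqref{eq:expansion-f-ell} and count the free parameters. As $f^{(\ell)}$ is polynomial in $u$, Taylor expansion at $u=0$ gives $f^{(\ell)}(u,v)=f^{(\ell)}(0,v)+u\,\parDer{u}{}{f^{(\ell)}}(0,v)+u^2R_\ell(u,v)$ with $R_\ell:=u^{-2}\bigl(f^{(\ell)}-\tr-u\,\eta_\ell\bigr)$ a polynomial, and inserting the two interface coefficients $\tr$ and $\eta_\ell$ yields \eqref{eq:expansion-f-ell}. Degree tracking then places $R_\ell$ in $\poly^2_{\pd-2,\pd}$ for a quadrilateral and, via the total-degree bound, in $\poly^2_{\pd-2}$ for a triangle; since only $R_\ell$ is unconstrained by the interface data, it contributes $\dim\poly^2_{\pd-2,\pd}=(\pd-1)(\pd+1)$ respectively $\dim\poly^2_{\pd-2}=\binom{\pd}{2}$ free coefficients per element, and summing over the two elements grouped by type gives $D_0$ as in \eqref{eq-D0}. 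I expect the main obstacle to be the degree bookkeeping in the triangular case, where the total-degree constraint couples the $u$- and $v$-degrees: for the converse I must verify that any $\eta_\ell$ with $\deg\eta_\ell\le\pd-1$ is realizable as $\parDer{u}{}{f^{(\ell)}}(0,\cdot)$ for some $f^{(\ell)}\in\poly^2_\pd$ while leaving the entire block $R_\ell\in\poly^2_{\pd-2}$ free, so that \textbf{A2} is sufficient and not merely necessary; the remaining manipulations are routine algebra on the nonvanishing gluing functions.
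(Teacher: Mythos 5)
Your proposal is correct and follows essentially the same route as the paper's proof: both reduce the $\C{1}$ condition to $f^{(\ell)}(0,\cdot)=\tr$ and $\beta\,\parDer{u}{}{f^{(\ell)}}(0,\cdot)=r_\ell$ via \eqref{equal-wn-0}, read off \textbf{A1}--\textbf{A2} from polynomiality and the degree bounds $\deg(\parDer{u}{}{f^{(\ell)}}(0,\cdot))\leq \pd-1+\sigma_\ell$, and then obtain \eqref{eq:expansion-f-ell} and the count $D_0$ from the Taylor expansion in $u$. Your degree bookkeeping for the triangular versus quadrilateral cases and the realizability check for the converse are exactly the (routine) details the paper leaves implicit.
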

\begin{proof}
Since $\tr$ is the trace and $\w$ represents the (scaled) normal derivative as in~\eqref{eq-w-tilde}, the continuity conditions~\eqref{eq:geom-cont-0}
and~\eqref{equal-wn} are equivalent to 
\begin{linenomath}\begin{align}
& f^{(\ell)}(0,v)=  \tr(v), \quad \ell \in \{1,2\},\label{cond-Tr}\\
& \beta(v) \parDer{u}{}{f^{(\ell)}} (0,v)= r_\ell(v), 
\quad \ell \in \{1,2\}.  \label{cond-w}
\end{align}\end{linenomath}
Equations \eqref{cond-w} follow from \eqref{equal-wn-0} and equations ${\w}_{\bfm{n}(v)}^{(\ell)}(0,v)=\w(v)$, $\ell \in \{1,2\}$. Then by \eqref{cond-w} $\varphi$ is $\C{1}$-smooth if and only if $\tr$ and $\w$ are chosen such that the two polynomials $r_\ell$, $\ell \in \{1,2\}$, are divisible by the polynomial $\beta$ and the degree of $r_\ell$ is not greater than $d-1+\sigma_\ell + \deg(\beta)$. 
So the quotient  $\eta_\ell$ between $r_\ell$ and $\beta$ is a polynomial of degree 
$d-1+\sigma_\ell$. Equations~\eqref{cond-Tr} and~\eqref{cond-w} directly imply~\eqref{eq:expansion-f-ell}, and~\eqref{eq-D0} collects the dimensions of the spaces of $\poly^2_{\pd-2}$ and $\poly^2_{\pd-2,\pd}$, which concludes the proof.
\end{proof}

We are interested in the dimension of the space $\mathcal{V}_\pd^1(\Omega)$, which is obviously bounded from below by $D_0$. It is however not directly clear how the dimension depends on the underlying geometry.

\subsection{Construction of traces and normal derivatives} 

Let us now examine how one can choose $\tr$ and $\w$, to achieve that polynomials $r_1$ and $r_2$ are both divisible by $\beta$, satisfying condition {\bf A1}, and are of a maximal degree as specified in condition {\bf A2}. To simplify the analysis of the divisibility of certain polynomials by $\beta$, in what follows, we use for a polynomial function~$g$ the simplifying notation
\[
 g = g^* \beta + \hat{g},
\]
where ${g}^* = {\rm quot}(g,\beta)$ is the quotient  and $\hat{g} = {\rm rem}(g,\beta)$ the remainder of $g$ after division by $\beta$, hence, $\deg(g^*) = \deg (g) - \deg (\beta)$ and $\deg(\hat{g})<\deg (\beta)$. Keep in mind that this representation is unique, i.e., for each $g$ there exists exactly one pair $(g^*,\hat{g})$, with $\deg(\hat{g})<\deg (\beta)$, and vice versa.

Here and in what follows we use the notation 
\begin{equation*} 
 \beta \; | \; \sigma,
\end{equation*}
to state that the polynomial $\beta$ divides the polynomial $\sigma$, which means that there exists a polynomial $q = {\rm quot}(\sigma,\beta)$, such that $\beta \cdot q = \sigma$. This is equivalent to $\hat{\sigma}\equiv0$.
\begin{theorem}\label{main-thm-2}
The assumptions of Theorem~\ref{main-thm-1} are fulfilled if and only if 
the trace function $\tr \in \poly^1_\pd$ and the normal derivative $\w \in \poly^1_{\pd+2\delta-2}$ are chosen as
\begin{equation}\label{def-trAndw-Theorem1}
\tr(v) = \tr_0 + \int_{0}^v \left(\td^*(\xi) \beta(\xi) + \hat{\td}(\xi)\right) d\xi, \quad 
\w(v) = \w^*(v) \beta(v) + \hat{\w}(v),
\end{equation}
where the functions $\td^*$ and $\w^*$ are split in low degree contributions $\td^*_{low}$ and $\w^*_{low}$ and high degree contributions $\td^*_{high}$ and $\w^*_{high}$, respectively, such that
\begin{description}
 \item[(1)] the low degree contributions $\td^*_{low}$ and $\w^*_{low}$ satisfy
\begin{equation}\label{eq:deg-t-low}
\deg(\td^*_{low}) \leq d_\td, \mbox{ with } \; d_\td:= \min \left\{ \pd-1-\deg(\beta), \pd-1+ \sigma_1 - \deg(\beta_1),\pd-1+ \sigma_2 - \deg(\beta_2)  \right\},
\end{equation}
and
\begin{equation}\label{eq:deg-w-low}
\deg(\w^*_{low}) \leq \pdw, \mbox{ with } \; \pdw :=\min_{\ell \in \{1,2\}}\{ \pd-1+ \sigma_\ell - \deg(\alpha_\ell) \},
\end{equation}
 \item[(2)] the high degree contributions $\td^*_{high}$ and $\w^*_{high}$ satisfy $\td^*_{high}(v) = v^{d_\td+1} q_\td(v)$ and $\w^*_{high}(v) = v^{\pdw+1} q_\w(v)$, for some polynomials $q_\td$ and $q_\w$, respectively, and together with $\hat{\td}$, $\hat{\w}$ satisfy
\begin{linenomath}\begin{align}
 \deg(\beta \, \td^*_{high} + \hat{\td}) &\leq  \pd-1,  \label{eq:high-and-hat-0} \\
 \deg(\alpha_\ell (\beta \, \w^*_{high} + \hat{\w}) + \beta_\ell (\beta \, \td^*_{high} + \hat{\td})) &\leq \pd-1+ \sigma_\ell + \deg(\beta), \label{eq:high-and-hat-1}
\end{align}\end{linenomath}
 and 
 \item[(3)] the polynomials $\hat{\td}$ and $\hat{\w}$ moreover satisfy 
\begin{equation} \label{eq:beta-div-hat-functions}
 \beta \; | \; \alpha_\ell \hat{\w} + \beta_\ell \hat{\td}.
\end{equation}
\end{description}
\end{theorem}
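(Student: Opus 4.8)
The plan is to prove the equivalence by reparametrizing the pair $(\tr,\w)$ through division-with-remainder by $\beta$. Writing $\td=\dr\tr=\td^*\beta+\hat\td$ and $\w=\w^*\beta+\hat\w$ with $\deg\hat\td,\deg\hat\w<\deg\beta$, the displayed form~\eqref{def-trAndw-Theorem1} is nothing but this decomposition combined with the antiderivative $\tr(v)=\tr_0+\int_0^v\td$. Since the quotient--remainder pair is unique, prescribing $(\tr_0,\td^*,\hat\td,\w^*,\hat\w)$ is the same as prescribing $(\tr,\w)$, so the task reduces to translating the conditions \textbf{A1} and \textbf{A2} of Theorem~\ref{main-thm-1} (together with the space memberships) into conditions on these components. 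The key computation is to substitute the decomposition into $r_\ell=\alpha_\ell\w+\beta_\ell\td$ and collect the terms visibly divisible by $\beta$, giving
\begin{equation*}
r_\ell=\beta\,(\alpha_\ell\w^*+\beta_\ell\td^*)+(\alpha_\ell\hat\w+\beta_\ell\hat\td).
\end{equation*}

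First I would dispatch the divisibility condition \textbf{A1}. Because the first summand is a multiple of $\beta$, we have $\beta\mid r_\ell$ if and only if $\beta\mid(\alpha_\ell\hat\w+\beta_\ell\hat\td)$, which is exactly condition~(3), equation~\eqref{eq:beta-div-hat-functions}. This step involves only the remainders and is insensitive to the low/high splitting of $\td^*$ and $\w^*$.

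Next I would handle the degree conditions, where the low/high splitting does its work. Inserting $\td^*=\td^*_{low}+\td^*_{high}$ and $\w^*=\w^*_{low}+\w^*_{high}$ and regrouping, I split
\begin{equation*}
r_\ell=\underbrace{\beta\,(\alpha_\ell\w^*_{low}+\beta_\ell\td^*_{low})}_{\text{low part}}+\underbrace{\bigl(\alpha_\ell(\beta\w^*_{high}+\hat\w)+\beta_\ell(\beta\td^*_{high}+\hat\td)\bigr)}_{\text{high part}}.
\end{equation*}
The crux is that the low part automatically obeys the bound of \textbf{A2}: since $d_\td$ and $\pdw$ are defined as minima over $\ell$, we have $\pdw\le\pd-1+\sigma_\ell-\deg\alpha_\ell$ and $d_\td\le\pd-1+\sigma_\ell-\deg\beta_\ell$ for each $\ell$, so \eqref{eq:deg-t-low} and \eqref{eq:deg-w-low} force $\deg(\alpha_\ell\w^*_{low}+\beta_\ell\td^*_{low})\le\pd-1+\sigma_\ell$, hence the low part has degree at most $\pd-1+\sigma_\ell+\deg\beta$. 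Because the low part always meets the bound, $\deg r_\ell\le\pd-1+\sigma_\ell+\deg\beta$ holds if and only if the high part does, i.e. if and only if \eqref{eq:high-and-hat-1}. An identical argument on $\td=\td^*_{low}\beta+(\beta\td^*_{high}+\hat\td)$ shows $\tr\in\poly^1_\pd$, equivalently $\deg\td\le\pd-1$, reduces to \eqref{eq:high-and-hat-0}, the term $\td^*_{low}\beta$ being automatically of degree at most $\pd-1$ by \eqref{eq:deg-t-low}. Conditions~(1), together with the prescribed shapes $\td^*_{high}=v^{d_\td+1}q_\td$ and $\w^*_{high}=v^{\pdw+1}q_\w$ in~(2), merely fix the canonical split and carry no further content.

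Finally I would check that $\w\in\poly^1_{\pd+2\delta-2}$ is consistent with, and in fact implied by, (1)--(3), so that no information is lost. Using $\deg\beta_\ell\le2\delta-2+\sigma_\ell$ and $\deg(\beta\td^*_{high}+\hat\td)\le\pd-1$ from \eqref{eq:high-and-hat-0}, the term $\beta_\ell(\beta\td^*_{high}+\hat\td)$ in \eqref{eq:high-and-hat-1} has degree at most $\pd+2\delta-3+\sigma_\ell$; combined with \eqref{eq:high-and-hat-1} this bounds $\deg(\alpha_\ell(\beta\w^*_{high}+\hat\w))$ by the same quantity, and since $\deg\hat\w<\deg\beta$ it yields $\deg\w\le\pd+2\delta-3+\sigma_\ell-\deg\alpha_\ell\le\pd+2\delta-2$. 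Assembling the two directions then gives the result: for the forward implication I read off (1)--(3) from the canonical quotient--remainder and degree decompositions, and for the converse I reassemble $\tr,\w$ from the prescribed data and invoke the equivalences above to recover \textbf{A1}, \textbf{A2} and the space memberships. I expect the main obstacle to be the degree bookkeeping—specifically, making rigorous that the min-definitions of $d_\td$ and $\pdw$ are precisely what guarantees the low part respects the $\ell$-dependent \textbf{A2} bound simultaneously for $\ell=1$ and $\ell=2$, and that the low/high split genuinely decouples the automatically-satisfied degree bounds from the delicate leading-term cancellation encoded in \eqref{eq:high-and-hat-1}.
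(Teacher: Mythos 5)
Your proposal is correct and follows essentially the same route as the paper's proof: substitute the quotient--remainder decomposition into $r_\ell$, observe that \textbf{A1} reduces to the divisibility condition on the remainders, and use the min-definitions of $d_\td$ and $\pdw$ to show the low-degree part automatically satisfies the \textbf{A2} bound, so that \textbf{A2} and the membership $\tr\in\poly^1_\pd$ reduce to \eqref{eq:high-and-hat-1} and \eqref{eq:high-and-hat-0}, respectively. Your closing check that $\w\in\poly^1_{\pd+2\delta-2}$ is implied is a small addition the paper leaves implicit, but the argument is otherwise the same.
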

\begin{proof}
In this theorem we split the tangential derivative $\td$ and the normal derivative $\w$ into 
\begin{itemize}
 \item parts that are divisible by $\beta$ and are of low degree ($\td^*_{low}$ and $\w^*_{low}$), 
 \item parts that are divisible by $\beta$ and are of high degree ($\td^*_{high}$ and $\w^*_{high}$), and 
 \item parts that are not divisible by $\beta$ ($\hat{\td}$ and $\hat{\w}$).
\end{itemize}
This split is unique for each pair $\td$ and $\w$. Hence, we need to check if the conditions stated here are equivalent to the conditions {\bf A1} and {\bf A2} stated in Theorem~\ref{main-thm-1}. 
By definition we have
\begin{linenomath}\begin{align*}
r_\ell = \alpha_\ell \w + \beta_\ell \td & = \alpha_\ell  (\w^*_{low} \beta + \w^*_{high} \beta +\hat{\w}) + \beta_\ell (\td^*_{low} \beta + \td^*_{high} \beta +\hat{\td}) \\
& = \beta (\alpha_\ell (\w^*_{low} + \w^*_{high}) + \beta_\ell  (\td^*_{low} + \td^*_{high})) +
\alpha_\ell \hat{\w} + \beta_\ell \hat{\td}.
\end{align*}\end{linenomath}
Thus, {\bf A1} is equivalent to~\eqref{eq:beta-div-hat-functions}. What remains is to analyze the degree of $r_\ell$. From~\eqref{eq:deg-t-low} and~\eqref{eq:deg-w-low} we obtain the following bounds 
\[
 0 \leq \deg(\beta \, \alpha_\ell \, \w^*_{low}) \leq \deg(\beta) + \deg(\alpha_\ell) + \min_{\ell' \in \{1,2\}}\{ d-1+ \sigma_{\ell'} - \deg(\alpha_{\ell'}) \} \leq d-1+ \sigma_\ell + \deg(\beta)
\]
and
\[
 0 \leq \deg(\beta \, \beta_\ell \, \td^*_{low}) \leq \deg(\beta) + \deg(\beta_\ell) + \min_{\ell' \in \{1,2\}}\{ d-1+ \sigma_{\ell'} - \deg(\beta_{\ell'}) \} \leq d-1+ \sigma_\ell + \deg(\beta).
\]
Therefore, {\bf A2} is equivalent to~\eqref{eq:high-and-hat-1}.

Finally, we need to check that $\tr \in \poly^1_\pd$, which follows from~\eqref{eq:deg-t-low}, i.e.,  $\deg(\td^*_{low}) \leq  d-1-\deg(\beta)$, and from~\eqref{eq:high-and-hat-0}. This concludes the proof.
\end{proof}
\begin{remark}\label{rem:condition-delta-d}
 Note that this theorem requires no relation between the degree $\delta$ of the geometry mappings $\bfm{F}^{(\ell)}$ and the degree $\pd$ of the functions $f^{(\ell)}$. If we assume $\pd \geq 2\delta-2$, then conditions~\eqref{eq:high-and-hat-0}--\eqref{eq:high-and-hat-1} simplify to
\begin{linenomath}\begin{align*}
 \deg(\td^*_{high}) &\leq  d-1-\deg(\beta), \\
 \deg( \alpha_\ell \, \w^*_{high} + \beta_\ell \, \td^*_{high}) &\leq d-1+ \sigma_\ell.
\end{align*}\end{linenomath}
\end{remark}
From now on, we restrict ourselves to the case $\delta=2$. While Theorem~\ref{main-thm-2}~(1) is easy to analyze for arbitrary degree $\delta$, the contributions from parts~(2) and~(3) are significantly more complicated to study.

\section{The case of (bi-)quadratic elements}\label{sec:quadratic-elements}

In the remainder of the paper we analyze the case of (bi-)quadratic element mappings, i.e., $\delta=2$. Thus, we have $\pd\geq 2$ and, as a consequence, the conditions of Theorem~\ref{main-thm-2} simplify as specified in Remark~\ref{rem:condition-delta-d}. Before we analyze the $\C{1}$-smoothness conditions in detail, we introduce the B\'ezier representations of polynomial functions.

\subsection{Control point representation}
Let
\[
\bern{\pd}{i}(u) = \binom{\pd}{i} u^i(1-u)^{\pd-i}, \quad i=0,1,\ldots,\pd,
\]
\[
 \bernT{\pd}{i,j}(u,v) = \frac{\pd!}{i! j! (\pd-i-j)!} u^i v^j(1-u-v)^{\pd-i-j}, \quad i, j = 0,1,\ldots, \pd, \; i+j\leq \pd,
\]
and
\[
\bernB{ \scalebox{0.6}{$\Box$}, \pd}{i,j}(u,v) = \bern{\pd}{i}(u)\bern{\pd}{j}(v), \quad i,j=0,1,\ldots,\pd,
\]
denote the univariate, triangle and tensor-product Bernstein bases for the spaces $\poly^1_\pd$, $\poly^2_\pd$ and $\poly^2_{\pd,\pd}$, respectively. 

For $\mathcal{D}^{(\ell)}= \parDomT$, i.e., for a triangular element, the geometry mapping is given as
\[
\bfm{F}^{(\ell)}(u,v) = \sum_{0\leq i+j\leq 2} \cF_{i,j}^{(\ell)} \bernT{2}{i,j}(u,v),
\]
and for $\mathcal{D}^{(\ell)}= \parDomQ$, i.e., for a quadrilateral element, it is defined as
\[
\bfm{F}^{(\ell)}(u,v) = \sum_{i,j=0}^2 \cF_{i,j}^{(\ell)} \bernB{ \scalebox{0.6}{$\Box$},2}{i,j}(u,v).
\]
The interface $\bE$ is a curve, which is parameterized by a quadratic polynomial,
\begin{equation} \label{eq:edgePar1}
\bE = \left\{ (1-t)^2\cF_{0} + 2t(1-t) \cF_{1}+ t^2 \cF_{2}: t\in \left(0,1\right) \right\},
\end{equation}
with control points $\cF_{i} \in\RR^2$, $i\in\{0,1,2\}$. Thus, given control points
\begin{equation}\label{eq:control-pts-elements}
\cF^{(\ell)} = \begin{bmatrix}
\cF_{0,0}^{(\ell)} & \cF_{0,1}^{(\ell)} & \cF_{0,2}^{(\ell)}\\[1mm]
\cF_{1,0}^{(\ell)} & \cF_{1,1}^{(\ell)} &\\[1mm]
\cF_{2,0}^{(\ell)} &  & 
\end{bmatrix} \quad \mbox{ or } \quad
\cF^{(\ell)} = \begin{bmatrix}
\cF_{0,0}^{(\ell)} & \cF_{0,1}^{(\ell)} & \cF_{0,2}^{(\ell)}\\[1mm]
\cF_{1,0}^{(\ell)} & \cF_{1,1}^{(\ell)} & \cF_{1,2}^{(\ell)}\\[1mm]
\cF_{2,0}^{(\ell)} & \cF_{2,1}^{(\ell)} & \cF_{2,2}^{(\ell)}
\end{bmatrix}
\end{equation} 
for $\ell\in\{1,2\}$, depending if $\mathcal{D}^{(\ell)}= \parDomT$ or $\mathcal{D}^{(\ell)}= \parDomQ$, respectively, the continuity condition $\bfm{F}^{(1)}(0,v) = \bfm{F}^{(2)}(0,v)$ implies that the control points of $\bfm{F}^{(1)}$ and $\bfm{F}^{(2)}$ corresponding to the edge are the same, i.e., 
\[
 \cF_{0} := \cF_{0,0}^{(1)} = \cF_{0,0}^{(2)}, \quad
 \cF_{1} := \cF_{0,1}^{(1)} = \cF_{0,1}^{(2)}, \quad \mbox{and} \quad
 \cF_{2} := \cF_{0,2}^{(1)} = \cF_{0,2}^{(2)}.
\]

\subsection{Smoothness conditions}

The question of divisibility by $\beta$ depends on the degree of $\beta$, which in turn depends on the interface $\bE$, parameterized as in \eqref{eq:edgePar1}. Three different cases can happen:
\begin{description}
\item{Case (a): uniformly parameterized linear interface}. In this case $\cF_{1}  =\frac{1}{2}\cF_{0} + \frac{1}{2}\cF_{2} $,  and $\beta$ is a constant;
\item{Case (b): non-uniformly parameterized linear interface}. In this case $\cF_{1}  = (1-\lambda)\cF_{0} + \lambda \cF_{2} $ for some $\lambda\in (0,1)$, $\lambda\not = \frac{1}{2}$, 
and $\beta \in \poly^1_2$ is the square of a linear polynomial;  
\item{Case (c): parabolic interface}. In this case control points  $\cF_{0}$, $\cF_{1}$ and $\cF_{2}$ are not collinear, and $\beta$ is an irreducible, quadratic polynomial (with a non-zero leading coefficient).
\end{description}
In the next subsection we consider Case (a), which is the simplest case to be analyzed, since in that case $\beta$ is a constant. Cases (b) and (c) are considered in the Subsections~\ref{sec:non-uniform} and~\ref{sec:parabolic}, respectively. The special case of (bi-)linear elements handled in~\cite{GrKaKnTaVi2020} is completely covered by Case (a).

\subsubsection{Uniformly parameterized linear interface} 

When considering a uniformly parameterized linear interface, the gluing data simplifies significantly. We then have that
\[
 \bfm{n}(v)= \orth{\left(\parDer{v}{}{\bfm{F}^{(1)}} (0,v)\right)} =  \orth{\left(\cF_{2}-\cF_{0}\right)}
\]
is a constant vector and consequently $\beta = \norm{\bfm{n}}^2 \in \RR^+$. Moreover, we obtain 
\[
\wt{\alpha}_\ell(v)=  \sprod{\parDer{u}{}{\bfm{F}^{(\ell)}}(0,v)}{\bfm{n}}, \quad 
\beta_\ell(v)=  \sprod{\orth{(\parDer{u}{}{\bfm{F}^{(\ell)}} (0,v))}}{\bfm{n}},
\]
for $\ell \in \{1,2\}$, and it follows directly from the above equations that
\[
\deg(\alpha_\ell) \leq 1 + \sigma_\ell, \quad \deg(\beta_\ell) \leq 1 + \sigma_\ell, \quad \ell \in \{1,2\}.
\]
The following proposition gives sufficient and necessary conditions on the trace $\tr$ and normal derivative $\w$ to fulfill the assumptions {\bf A1}-{\bf A2} of Theorem~\ref{main-thm-1} in the case of uniformly parameterized linear interface. To shorten the
notation, we denote the coefficient of a univariate polynomial $p$ at the power $j$ by $\cf(p;j)$.
\begin{proposition} \label{proposition-linear-edge}
Suppose that we are in Case (a), that is, the interface $\bE$ is a line, parameterized uniformly, and let $\varphi$ be an isogeometric function. We can distinguish two cases:
\begin{description}
 \item[(1)] If
\begin{equation}\label{eq:Proposition3-case1}
 \deg ({\alpha}_{1}\beta_2-\alpha_2\beta_1) = \max(\deg ({\alpha}_{1}) +\sigma_2, \deg(\alpha_2) + \sigma_1) +1,
\end{equation}
 then $\varphi\in \mathcal{V}^1_\pd(\Omega)$ is equivalent to the trace and normal derivative satisfying $\tr \in \poly^1_{\pd-1}$ and $\w\in\poly^1_{\pdw}$,
 \item[(2)] else $\varphi\in \mathcal{V}^1_\pd(\Omega)$ if and only if the trace and normal derivative satisfy $\tr \in \poly^1_{\pd}$, $\w\in\poly^1_{\pdw+1}$, where the leading coefficients of $\tr$ and $\w$ satisfy a linear constraint. 
\end{description}
Here $\pdw$ is defined as in Theorem~\ref{main-thm-2}.
\end{proposition}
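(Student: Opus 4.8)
The plan is to invoke the characterization of Theorem~\ref{main-thm-1} and to exploit that in Case~(a) the polynomial $\beta$ is a nonzero constant. Since $\beta\in\RR^+$, condition \textbf{A1} is automatically satisfied, so that $\varphi\in\mathcal{V}^1_\pd(\Omega)$ is governed by \textbf{A2} alone, namely $\deg(r_\ell)\le \pd-1+\sigma_\ell$ for $\ell\in\{1,2\}$, where $r_\ell=\alpha_\ell\,\w+\beta_\ell\,\td$ and $\td=\dr{\tr}$. Throughout I would use the degree bounds $\deg(\alpha_\ell)\le 1+\sigma_\ell$ and $\deg(\beta_\ell)\le 1+\sigma_\ell$ recorded above, abbreviating $a_\ell:=\deg(\alpha_\ell)$ and writing the relevant leading coefficients $A_\ell:=\cf(\alpha_\ell;a_\ell)\neq0$ and $B_\ell:=\cf(\beta_\ell;1+\sigma_\ell)$, where $B_\ell=0$ exactly when $\deg(\beta_\ell)<1+\sigma_\ell$.

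First I would split \textbf{A2} into a degree bound on $\w$ and finitely many top-coefficient conditions. A priori $\w\in\poly^1_{\pd+2}$ and $\td\in\poly^1_{\pd-1}$, hence $\deg(\beta_\ell\,\td)\le \pd+\sigma_\ell$; therefore for every $k>\pd+\sigma_\ell$ only $\alpha_\ell\,\w$ contributes to the coefficient of $r_\ell$ at power $k$, and the vanishing of all such coefficients for both $\ell$ is equivalent to $\deg(\alpha_\ell\,\w)\le\pd+\sigma_\ell$, i.e. to $\deg(\w)\le\pd+\sigma_\ell-a_\ell$ for both indices, i.e. to $\deg(\w)\le\pdw+1$ by the definition~\eqref{eq:deg-w-low} of $\pdw$. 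Once $\deg(\w)\le\pdw+1$ and $\deg(\td)\le\pd-1$ are imposed, a one-line degree count gives $\deg(r_\ell)\le\pd+\sigma_\ell$, so the only condition left in \textbf{A2} is the vanishing of $\cf(r_\ell;\pd+\sigma_\ell)$ for $\ell\in\{1,2\}$.

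These two conditions form a homogeneous linear system in the two top coefficients $s:=\cf(\td;\pd-1)$ and $w:=\cf(\w;\pdw+1)$ (note $\cf(\tr;\pd)=s/\pd$, which is why the constraint is stated on the leading coefficients of $\tr$ and $\w$). Expanding, I would show that the $w$-contribution to $\cf(r_\ell;\pd+\sigma_\ell)$ survives only for those $\ell$ attaining the minimum in~\eqref{eq:deg-w-low}, where it equals $A_\ell\,w$, while the $s$-contribution survives only when $\deg(\beta_\ell)=1+\sigma_\ell$, where it equals $B_\ell\,s$. The crux is to identify the determinant of the resulting $2\times2$ matrix. Here I would use the elementary equivalence $a_1+\sigma_2\gtrless a_2+\sigma_1 \iff a_1-\sigma_1\gtrless a_2-\sigma_2$, which shows that the index $\ell$ attaining the minimum in~\eqref{eq:deg-w-low} is precisely the one for which $a_\ell+\sigma_{\ell'}$ (with $\ell'\neq\ell$) attains $\mu:=\max(a_1+\sigma_2,a_2+\sigma_1)$; a comparison of leading coefficients then yields the key identity that this determinant coincides, up to sign, with $\cf(\alpha_1\beta_2-\alpha_2\beta_1;\,\mu+1)$.

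It then remains to read off the two cases. Since $\deg(\alpha_1\beta_2-\alpha_2\beta_1)\le\mu+1$ always, condition~\eqref{eq:Proposition3-case1} is exactly the statement $\deg(\alpha_1\beta_2-\alpha_2\beta_1)=\mu+1$, i.e. that the above coefficient and hence the determinant is nonzero; then the system forces $s=w=0$, leaving $\tr\in\poly^1_{\pd-1}$ and $\w\in\poly^1_{\pdw}$ with all lower coefficients free and unconstrained, which is case~(1). If instead the determinant vanishes, then because some index always attains the minimum in~\eqref{eq:deg-w-low} and the corresponding $A_\ell\neq0$, the system matrix is nonzero and its kernel is exactly one-dimensional; this yields a single extra degree of freedom, so that $\tr\in\poly^1_\pd$ and $\w\in\poly^1_{\pdw+1}$ become admissible subject to the one linear relation between their leading coefficients prescribed by the kernel, which is case~(2). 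I expect the main obstacle to be the degree bookkeeping in the third paragraph: establishing the determinant identity requires carefully tracking the possibly deficient degrees of $\alpha_\ell$ and $\beta_\ell$ and correctly handling a tie in the minimum defining $\pdw$.
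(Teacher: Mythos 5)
Your proposal is correct and takes essentially the same route as the paper's proof: both reduce the $\C{1}$ conditions to a homogeneous $2\times 2$ linear system in the leading coefficients of $\td$ and $\w$ and identify its determinant with $\cf(\alpha_1\beta_2-\alpha_2\beta_1;\,\max(\deg(\alpha_1)+\sigma_2,\deg(\alpha_2)+\sigma_1)+1)$, reading off cases (1) and (2) from whether this determinant vanishes. The only organizational difference is that you argue directly from conditions {\bf A1}--{\bf A2} of Theorem~\ref{main-thm-1}, first establishing $\deg(\w)\leq\pdw+1$, whereas the paper goes through the low/high split of Theorem~\ref{main-thm-2} and must separately rule out a non-constant $q_\w$; the substance is the same.
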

\begin{proof}
We follow the structure of Theorem~\ref{main-thm-2}. Since $\beta$ is a constant, it is clear that $\td^* = \td/\beta$, $\w^* = \w/\beta$, $\hat{\td}=0$ and $\hat{\w}=0$. So~\eqref{eq:beta-div-hat-functions} is always satisfied. We then have that $\varphi\in \mathcal{V}^1_\pd(\Omega)$ if and only if $\td^* = \td^*_{low} + \td^*_{high}$ and $\w^* = \w^*_{low} + \w^*_{high}$, where $\td^*_{low} \in \poly^1_{\pd}$, $\w^*_{low} \in \poly^1_{\pdw}$, $\td^*_{high}(v) = v^{d_\td+1} q_\td(v)$ and $\w^*_{high}(v) = v^{\pdw+1} q_\w(v)$, which must satisfy~\eqref{eq:high-and-hat-0}, which reduces to
\[
 \deg(\td^*_{high}) \leq \pd-1,
\]
as well as~\eqref{eq:high-and-hat-1}, which reduces to
\[
 \deg(\alpha_\ell \, \w^*_{high} + \beta_\ell \, \td^*_{high}) \leq d-1+ \sigma_\ell.
\]

Let $\pdt$ be given as in Theorem~\ref{main-thm-2} and assume first that $\pdt=\pd-1$, which is equivalent to $\deg(\beta_1) \leq \sigma_1$ and $\deg(\beta_2) \leq \sigma_2$. Then~\eqref{eq:high-and-hat-0} implies $\td^*_{high}=0$ and, consequently,~\eqref{eq:high-and-hat-1} reduces to
\[
 \deg(\alpha_\ell \, \w^*_{high}) \leq \pd-1+ \sigma_\ell
\]
which implies $\w^*_{high}=0$. Thus, $\td^* = \td^*_{low} \in \poly^1_{\pd-1}$ and $\w^* = \w^*_{low} \in \poly^1_{\pdw}$. This is covered by case (2), where the linear constraint is given as
\[
 \cf(\w;  \pdw+1) = 0. 
\]

Thus, from now on we assume $\pdt<\pd-1$, with the only possible option $\pdt=\pd-2$. Thus $\td^*_{low} \in \poly^1_{\pd-2}$ and~\eqref{eq:high-and-hat-0} implies $\deg(\td^*_{high}) \leq  d-1$. Consequently $\td^*_{high}(v) = v^{d-1} q_\td$, where $q_\td$ is a constant. Moreover,~\eqref{eq:high-and-hat-1} simplifies to
\[
 \deg(\alpha_\ell(v) \, v^{\pdw+1} q_\w(v) + \beta_\ell(v) \, v^{d-1} q_\td) \leq d-1+ \sigma_\ell.
\]
Since $\pdw \geq \pd - 2$, this reduces to
\begin{equation}\label{eq:degree-bound-sigma-ell}
 \deg(\alpha_\ell(v) \, v^{\pdw-d+2} q_\w(v) + \beta_\ell(v) \, q_\td) \leq \sigma_\ell.
\end{equation}
Assume, w.l.o.g., $\deg ({\alpha}_{1}) +\sigma_2 \geq \deg(\alpha_2) + \sigma_1$. We then have $\pdw-d+2 = 1+\sigma_1 - \deg(\alpha_1)$ and~\eqref{eq:degree-bound-sigma-ell} yields for $\ell=1$ that
\[
 \cf\left(\alpha_1(v) \, v^{1+\sigma_1 - \deg(\alpha_1)} q_\w(v) + \beta_1(v) \, q_\td \, ; \, \sigma_1 + k\right) = 0, \quad \forall \; k\geq 1.
\]
Since $\deg(\beta_1) \leq \sigma_1+1$, we have $\cf\left(\beta_1(v) \, q_\td \, ; \, \sigma_1 + k\right) = 0$ for all $k\geq 2$. 

Assuming $q_\w$ is a polynomial with $\deg(q_w) \geq 1$ and checking the maximum coefficient $K=1+\deg(q_w) \geq 2$ we obtain
\[
 \cf\left(\alpha_1(v) \, v^{1+\sigma_1 - \deg(\alpha_1)} q_\w(v) + \beta_1(v) \, q_\td \, ; \, \sigma_1 + K\right) = \cf(\alpha_1 \,;\, \deg(\alpha_1)) \cf(q_\w \,;\, \deg(q_\w)) = 0,
\]
which implies $\cf(q_\w \,;\, \deg(q_\w)) =0$, which is in contrast to our assumption. 
Hence, $q_\w (v) = q_\w$ must be a constant. For $k=1$ we then obtain the condition
\[
 \cf\left(\alpha_1(v) \, v^{1+\sigma_1 - \deg(\alpha_1)} q_\w + \beta_1(v) \, q_\td \, ; \, \sigma_1 + 1\right) = \cf(\alpha_1 \, ; \, \deg(\alpha_1)) \; q_\w + \cf(\beta_1 \, ; \, \sigma_1 + 1) \, q_\td = 0.
\]
For $\ell=2$ we obtain with the same reasoning that~\eqref{eq:degree-bound-sigma-ell} is equivalent to 
\[
 \cf\left(\alpha_2(v) \, v^{1+\sigma_1 - \deg(\alpha_1)} q_\w + \beta_2(v) \, q_\td \, ; \, \sigma_2 + 1\right) = \cf({\alpha}_{2}; \deg ({\alpha}_{1})-\sigma_1+\sigma_2) \, q_\w + \cf(\beta_2 \, ; \, \sigma_2 + 1) \, q_\td = 0.
\]
To summarize, we obtain that~\eqref{eq:high-and-hat-0}-\eqref{eq:high-and-hat-1} is equivalent to $\td^*_{high}(v) = v^{d-1} q_\td$ and $\w^*_{high}(v) = v^{\pdw+1} q_\w$ satisfying
\begin{eqnarray}
 \cf(\alpha_1 \, ; \, \deg(\alpha_1)) q_\w + \cf(\beta_1 \, ; \, \sigma_1 + 1) q_\td &=&0, \label{eq:condition-qw-qt-1} \\
 \cf(\alpha_2 \, ; \, \deg(\alpha_1)-\sigma_1+\sigma_2) q_\w + \cf(\beta_2 \, ; \, \sigma_2 + 1) q_\td&=&0. \label{eq:condition-qw-qt-2}
\end{eqnarray}
We trivially have $\cf(\alpha_1 \, ; \, \deg(\alpha_1)) \neq 0$, so the first equation~\eqref{eq:condition-qw-qt-1} is equivalent to
\begin{equation}\label{eq:linear-constraint-qw-qt}
 q_\w = - \frac{\cf(\beta_1 \, ; \, \sigma_1 + 1)}{\cf(\alpha_1 \, ; \, \deg(\alpha_1))} q_\td.
\end{equation}
If moreover the determinant $D$ of the linear system \eqref{eq:condition-qw-qt-1}--\eqref{eq:condition-qw-qt-2} satisfies
\begin{eqnarray*}
  0 \neq D &=& \cf(\alpha_1 \, ; \, \deg(\alpha_1)) \cf(\beta_2 \, ; \, \sigma_2 + 1) - \cf(\alpha_2 \, ; \, \deg(\alpha_1)-\sigma_1+\sigma_2) \cf(\beta_1 \, ; \, \sigma_1 + 1) \\ &=& \cf(\alpha_1 \beta_2 - \alpha_2 \beta_1 \, ; \, \deg(\alpha_1) + \sigma_2 + 1),
\end{eqnarray*}
which is equivalent to~\eqref{eq:Proposition3-case1}, then $q_\w = q_\td = 0$ and we are in case~(1). Else, we have $D=0$, which implies that any solution of~\eqref{eq:linear-constraint-qw-qt} also solves~\eqref{eq:condition-qw-qt-2}. Thus we are in case~(2), where the leading coefficients of $\w$ and $\tr$ satisfy the linear constraint 
\begin{equation}\label{eq:linear-constraint-w-tr}
 \cf(\w;  \pdw+1) = -d \frac{\cf(\beta_1 \, ; \, \sigma_1 + 1)}{\cf(\alpha_1 \, ; \, \deg(\alpha_1))} \,\cf(\tr; d),
\end{equation}
which is equivalent to~\eqref{eq:linear-constraint-qw-qt}. This completes the proof.
\end{proof}
\color{black}
Note that, in case (2), if and only if $\cf(\beta_1 \, ; \, \sigma_1 + 1)=\cf(\beta_2 \, ; \, \sigma_2 + 1)=0$, the trace and normal derivative are polynomials of degree $\pd$ and $\pdw$, respectively, which are completely independent of each other. Otherwise, they are of degree $\pd$ and $\pdw+1$, respectively, and are coupled through the extra condition on the leading coefficients.

\begin{remark}
Under assumptions of Proposition~\ref{proposition-linear-edge} we have at least
$\pd$ degrees of freedom for the construction of the trace and $\pdw + 1$ degrees of freedom for the construction of the normal derivative. 
In the generic case, that is, when $\pdw =d-2$ ($\alpha_1$ and $\alpha_2$ have no common factors and at least one is of a maximal possible degree $1+\sigma_\ell$) and~\eqref{eq:Proposition3-case1} is satisfied, 
the number of degrees of freedom is $2\pd-1$.   
\end{remark}

\subsubsection{Non-uniformly parameterized linear interface} \label{sec:non-uniform}

We now consider Case (b). So, we have $\cF_{1}  = (1-\lambda)\cF_{0}+ \lambda \cF_{2}$, with $\lambda \not = \frac{1}{2}$. Let 
\[
\rho(v) := 2 \lambda + 2(1-2\lambda) v. 
\]
Then it is straightforward to compute that
\[
\parDer{v}{}{\bfm{F}^{(\ell)}} (0,v) = \rho(v)\left(\cF_{2}-\cF_{0}\right), \quad \ell \in \{1,2\}.
\]
Let $\bfm{n}_0 = \orth{(\cF_{2}-\cF_{0})}$, which implies $\bfm{n}(v)= \rho(v)\bfm{n}_0$ and further $\beta(v) = \rho^2(v) \norm{\bfm{n}_0}^2$ and 
\begin{equation}\label{eq:gluing-data-case-b}
\wt{\alpha}_\ell(v) = \rho(v) \sprod{\parDer{u}{}{\bfm{F}^{(\ell)}} (0,v)}{\bfm{n}_0},  \quad
\beta_\ell(v) = \rho(v) \sprod{\orth{(\parDer{u}{}{\bfm{F}^{(\ell)}} (0,v))}}{\bfm{n}_0},
\end{equation}
for $\ell \in \{1,2\}$. Hence, we have that $\deg(\beta) = 2$, $\deg(\alpha_\ell) \leq 1+ \sigma_\ell$, $\deg(\beta_\ell) \leq 2+ \sigma_\ell$, and $\pdw\geq \pd-2$.

\begin{lemma}\label{lemma-case-b-high-low}
Suppose that we are in Case (b). Then the high degree contributions $\td^*_{high}$ and $\w^*_{high}$ of any $\C{1}$-smooth isogeometric function $\varphi$ must vanish. Furthermore, the low degree contributions $\td^*_{low}$ and $\w^*_{low}$ must satisfy $\td^*_{low} \in \poly^1_{\pd-3}$ and $\w^*_{low} \in \poly^1_{\pdw}$. Note that $\poly^1_{k}=\{0\}$ for negative $k$.
\end{lemma}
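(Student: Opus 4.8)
The plan is to start from the unique decomposition of the tangential and normal derivatives $\td,\w$ guaranteed by Theorem~\ref{main-thm-2}, and then eliminate the high-degree contributions $\td^*_{high},\w^*_{high}$ by a pure degree count, exploiting the fact that in Case~(b) the polynomial $\beta$ has degree exactly two. Since any $\C{1}$-smooth $\varphi$ has a trace and normal derivative satisfying conditions~(1)--(3) of Theorem~\ref{main-thm-2}, it suffices to see which of these contributions survive.

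First I would record the degree data of Case~(b). Because $\lambda\neq\frac12$ the factor $1-2\lambda$ is nonzero, so $\rho$ is genuinely linear, and since $\cF_{0}\neq\cF_{2}$ we have $\bfm{n}_0=\orth{(\cF_{2}-\cF_{0})}\neq\bfm{0}$; hence $\beta=\rho^2\norm{\bfm{n}_0}^2$ has degree \emph{exactly} $2$, while \eqref{eq:gluing-data-case-b} gives $\deg(\alpha_\ell)\leq1+\sigma_\ell$ and $\deg(\beta_\ell)\leq2+\sigma_\ell$. Substituting $\deg(\beta)=2$ and $\deg(\beta_\ell)\leq2+\sigma_\ell$ into the definition of $\pdt$ in \eqref{eq:deg-t-low} shows that the first term $\pd-1-\deg(\beta)=\pd-3$ realizes the minimum, so $\pdt=\pd-3$, whereas $\pdw\geq\pd-2$ as already noted.

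Next, since we work with $\delta=2$ and $\pd\geq2=2\delta-2$, Remark~\ref{rem:condition-delta-d} lets me replace \eqref{eq:high-and-hat-0}--\eqref{eq:high-and-hat-1} by the simpler bounds $\deg(\td^*_{high})\leq\pd-1-\deg(\beta)=\pd-3$ and $\deg(\alpha_\ell\,\w^*_{high}+\beta_\ell\,\td^*_{high})\leq\pd-1+\sigma_\ell$. By Theorem~\ref{main-thm-2}~(2) the high contribution has the form $\td^*_{high}(v)=v^{\pdt+1}q_\td(v)=v^{\pd-2}q_\td(v)$, so a nonzero $\td^*_{high}$ would satisfy $\deg(\td^*_{high})\geq\pd-2>\pd-3$, contradicting the first simplified bound; hence $\td^*_{high}=0$. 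With $\td^*_{high}=0$ the second simplified bound reduces to $\deg(\alpha_\ell\,\w^*_{high})\leq\pd-1+\sigma_\ell$. Writing $\w^*_{high}(v)=v^{\pdw+1}q_\w(v)$ and choosing the index $\ell_0$ attaining the minimum defining $\pdw$, one has $\deg(\alpha_{\ell_0})=\pd-1+\sigma_{\ell_0}-\pdw$, so a nonzero $q_\w$ would force $\deg(\alpha_{\ell_0}\,\w^*_{high})\geq\deg(\alpha_{\ell_0})+\pdw+1=\pd+\sigma_{\ell_0}$, exceeding $\pd-1+\sigma_{\ell_0}$; thus $\w^*_{high}=0$. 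The remaining assertions $\td^*_{low}\in\poly^1_{\pd-3}$ and $\w^*_{low}\in\poly^1_{\pdw}$ are then exactly Theorem~\ref{main-thm-2}~(1) read with $\pdt=\pd-3$.

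I expect no serious obstacle: the argument is degree bookkeeping. The only genuinely load-bearing point is the very first step, namely that $\deg(\beta)=2$ rather than merely $\leq 2$; the whole elimination collapses if $\beta$ has lower degree. This is precisely where the non-uniform parameterization of Case~(b) (equivalently $\lambda\neq\frac12$, keeping $\rho$ linear) enters, and it is what cleanly separates the present situation from the constant-$\beta$ setting of Case~(a).
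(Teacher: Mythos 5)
Your proof is correct and follows essentially the same route as the paper's: compute $\pdt=\pd-3$ from $\deg(\beta)=2$ and the degree bounds on $\beta_\ell$, kill $\td^*_{high}$ by the mismatch $\pd-2>\pd-3$, then kill $\w^*_{high}$ by the degree of $\alpha_{\ell}\,\w^*_{high}$ at the index attaining the minimum defining $\pdw$. The only difference is presentational — you make explicit the steps (that $\deg(\beta)$ is exactly $2$, and the choice of $\ell_0$) that the paper leaves implicit.
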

\begin{proof}
Let $\td$ and $\w$ be the tangential and normal derivative of the isogeometric function $\varphi$. According to Theorem~\ref{main-thm-2} and Remark~\ref{rem:condition-delta-d}, the high degree contributions $\td^*_{high}(v) = v^{d_\td+1} q_\td(v)$ and $\w^*_{high}(v) = v^{\pdw+1} q_\w(v)$ must satisfy
\[
 \deg(\td^*_{high}) \leq  d-3
\]
and
\[
 \deg(\alpha_\ell \, \w^*_{high} + \beta_\ell \, \td^*_{high}) \leq d -  1+ \sigma_\ell.
\]
However, the degree bounds of the gluing functions imply $\pdt = \pd-3$ and therefore $\td^*_{high}=0$. Consequenty, $\w^*_{high}$ must also vanish, since any non-zero $\w^*_{high}$ would yield
\[
 \deg(\alpha_\ell \, \w^*_{high}) \geq  \deg(\alpha_\ell) + \min_{\ell' \in \{1,2\}}\{ \pd-1+ \sigma_\ell' - \deg(\alpha_\ell') \} +1,
\]
which contradicts the degree bound. The given degrees for $\td^*_{low}$ and $\w^*_{low}$ directly follow from the degrees of the gluing functions, which completes the proof.
\end{proof}
We can now analyze the $\C{1}$-smooth space $\mathcal{V}^1_\pd(\Omega)$ for Case (b).
\begin{proposition} \label{proposition-nonuniform-edge}
Suppose that we are in Case (b), that is, the interface $\bE$ is a line, parameterized non-uniformly, and let $\varphi$ be an isogeometric function. Then $\varphi\in \mathcal{V}^1_\pd(\Omega)$, if and only if $\td^*\in \poly^1_{\pd-3}$, $\w^*\in\poly^1_{\pdw}$ and
\begin{description}
 \item[(1)] $\hat{\td}(v)=\mu_1 \rho(v)$, $\hat{\w}(v)=0$, for $\mu_1\in\RR$, if the function $\beta$ does not divide ${\alpha}_1 {\beta}_2 - {\alpha}_2 {\beta}_1$, 
 else
 \item[(2)] $\hat{\td}(v) =  \mu_1 \rho(v) + a_L \mu_2$, $\hat{\w}(v) = -b_L \mu_2 \rho(v)$, for $\mu_1,\mu_2\in\RR$. Here
\[
{a}_\ell = {\rm rem}(\alpha_\ell,\rho), \quad {b}_\ell = {\rm rem}\left(\beta_\ell/\rho,\rho\right), \quad \ell \in \{1,2\},
\]
where $\beta_\ell/\rho$ is a polynomial as can be seen in~\eqref{eq:gluing-data-case-b} and $L\in\{1,2\}$ is chosen so that ${a}_L \not =0$.
\end{description}
\end{proposition}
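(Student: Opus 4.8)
The plan is to reduce everything to condition~(3) of Theorem~\ref{main-thm-2}, since the degree requirements are already disposed of. By Lemma~\ref{lemma-case-b-high-low} the high-degree contributions vanish and $\td^*=\td^*_{low}\in\poly^1_{\pd-3}$, $\w^*=\w^*_{low}\in\poly^1_{\pdw}$, which is exactly the first assertion of the proposition. Moreover, with $\td^*_{high}=\w^*_{high}=0$ and $\deg\hat{\td},\deg\hat{\w}\leq1$, the degree bounds~\eqref{eq:high-and-hat-0}--\eqref{eq:high-and-hat-1} hold automatically for $\pd\geq2$ (the left-hand side of~\eqref{eq:high-and-hat-1} has degree at most $3+\sigma_\ell\leq\pd+1+\sigma_\ell$). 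Hence the only remaining requirement is the divisibility~\eqref{eq:beta-div-hat-functions}, i.e. $\beta\mid\alpha_\ell\hat{\w}+\beta_\ell\hat{\td}$ for $\ell\in\{1,2\}$. Because $\beta=\norm{\bfm{n}_0}^2\rho^2$, this is equivalent to $\rho^2\mid\alpha_\ell\hat{\w}+\beta_\ell\hat{\td}$, and since $\deg\hat{\td},\deg\hat{\w}<2=\deg\beta$ the unknowns $\hat{\td},\hat{\w}$ are (at most) linear polynomials.

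Next I would translate $\rho^2\mid p_\ell$, with $p_\ell:=\alpha_\ell\hat{\w}+\beta_\ell\hat{\td}$, into the two double-root conditions $p_\ell(v_0)=0$ and $\dr{p_\ell}(v_0)=0$ at the real root $v_0$ of $\rho$, using that $\rho$ is linear and $\dr{\rho}(v_0)=2(1-2\lambda)\neq0$ by the Case~(b) hypothesis $\lambda\neq\tfrac12$. Writing $\beta_\ell=\rho\,(\beta_\ell/\rho)$ as in~\eqref{eq:gluing-data-case-b} gives $\beta_\ell(v_0)=0$, $a_\ell={\rm rem}(\alpha_\ell,\rho)=\alpha_\ell(v_0)$ and $b_\ell={\rm rem}(\beta_\ell/\rho,\rho)=(\beta_\ell/\rho)(v_0)$, so the value condition collapses to $a_\ell\,\hat{\w}(v_0)=0$. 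The coprimality $\gcd(\alpha_1,\alpha_2)=1$ (inherited from $\alpha_\ell=\wt{\alpha}_\ell/\q$) forbids $a_1=a_2=0$, hence some $a_L\neq0$; this both justifies the choice of $L$ in the statement and forces $\hat{\w}(v_0)=0$, i.e. $\hat{\w}=c\,\rho$ for some $c\in\RR$.

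Substituting $\hat{\w}=c\rho$ into the derivative condition and using $\dr{\beta_\ell}(v_0)=\dr{\rho}(v_0)\,b_\ell$ and $\dr{\hat{\w}}(v_0)=c\,\dr{\rho}(v_0)$, the nonzero factor $\dr{\rho}(v_0)$ cancels and leaves the linear system $a_\ell\,c+b_\ell\,\hat{\td}(v_0)=0$, $\ell\in\{1,2\}$, in the scalars $c$ and $\hat{\td}(v_0)$, with determinant $a_1b_2-a_2b_1$. I would close the loop by showing $\beta\mid\alpha_1\beta_2-\alpha_2\beta_1$ iff this determinant vanishes: since $\alpha_1\beta_2-\alpha_2\beta_1=\rho\bigl(\alpha_1(\beta_2/\rho)-\alpha_2(\beta_1/\rho)\bigr)$, divisibility by $\rho^2$ is equivalent to the bracket vanishing at $v_0$, i.e. to $a_1b_2-a_2b_1=0$. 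In the non-divisible case the system has only the trivial solution, giving $c=0$, $\hat{\w}=0$ and $\hat{\td}(v_0)=0$, hence $\hat{\td}=\mu_1\rho$, which is case~(1). In the divisible case the two equations are proportional; scaling the one-dimensional solution by $\mu_2$ yields $c=-b_L\mu_2$ and $\hat{\td}(v_0)=a_L\mu_2$, and restoring the free slope of $\hat{\td}$ by $\mu_1\rho$ produces exactly $\hat{\td}=\mu_1\rho+a_L\mu_2$, $\hat{\w}=-b_L\mu_2\rho$, which is case~(2); I would then verify directly that this pair solves both equations using $a_1b_2=a_2b_1$.

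The main obstacle is the careful bookkeeping of where the single factor $\rho$ sits: $\beta$ carries $\rho^2$, while each $\beta_\ell$ carries only one $\rho$ and $\alpha_\ell$ generically carries none, so the two divisibility requirements are genuinely \emph{one} value-condition plus \emph{one} derivative-condition rather than four independent constraints. The crux is recognizing that this collapses the whole problem to the single $2\times2$ system above and that its determinant is precisely the reduced quantity $a_1b_2-a_2b_1$ governing the dichotomy; everything else — the identification of $a_\ell,b_\ell$ with evaluations at $v_0$, the dimension counts ($1$ resp. $2$) of the solution spaces, and the verification of the closed forms — is routine once this reduction is in place.
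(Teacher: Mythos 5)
Your proposal is correct and follows essentially the same route as the paper: Lemma~\ref{lemma-case-b-high-low} disposes of the degree conditions, and the divisibility condition~\eqref{eq:beta-div-hat-functions} is reduced to the same $2\times 2$ homogeneous linear system with matrix $\bigl[\begin{smallmatrix} a_1 & b_1\\ a_2 & b_2\end{smallmatrix}\bigr]$ in the same two scalar unknowns, with the dichotomy governed by its determinant $a_1 b_2 - a_2 b_1$. The only (cosmetic) difference is that you extract the two conditions by evaluating $p_\ell$ and $\dr{p_\ell}$ at the root of $\rho$, whereas the paper expands $\hat{\alpha}_\ell\hat{\w}+\hat{\beta}_\ell\hat{\td}$ in powers of $\rho$ and reads off coefficients; your explicit verification that $a_1b_2-a_2b_1=0$ is equivalent to $\beta \mid \alpha_1\beta_2-\alpha_2\beta_1$ is a welcome elaboration of a step the paper only asserts.
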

\begin{proof}
We follow the structure of Theorem~\ref{main-thm-2}. Lemma~\ref{lemma-case-b-high-low} gives the degrees of $\td^*$ and $\w^*$. What is left to analyze is which functions $\hat{\td},\hat{\w} \in \poly^1_1$ satisfy~\eqref{eq:beta-div-hat-functions}, i.e., 
\[
 \beta \;\big|\; {\alpha}_\ell \hat{\w} + \beta_\ell \hat{\td}, \quad \ell \in \{1,2\}.
\]
By computing
\[
{\alpha}_\ell = {\alpha}_\ell ^* \, \beta+\hat{\alpha}_\ell, \quad \beta_\ell = \beta_\ell ^* \, \beta+\hat{\beta}_\ell,
\]
this reduces to
\begin{equation} \label{eq-divisionWithbeta2}
 \beta \;\big|\; \hat{\alpha}_\ell \hat{\w} + \hat{\beta}_\ell \hat{\td}, \quad \ell \in \{1,2\}.
\end{equation}
Note that $\hat{\alpha}_1$ and $\hat{\alpha}_2$ cannot both be zero, since this would imply that $\beta$ is a non-constant common factor of ${\alpha}_1$ and ${\alpha}_2$. We have 
\[\beta_{\ell} = \rho \sprod{\orth{(\parDer{u}{}{\bfm{F}^{(\ell)}} (0,v))}}{\bfm{n}_0} = \rho \left(\rho \; {\rm quot}\left(\beta_\ell/\rho,\rho\right) + {\rm rem}\left(\beta_\ell/\rho,\rho\right) \right) = \beta\frac{{\rm quot}\left(\beta_\ell/\rho,\rho\right)}{\norm{\bfm{n}_0}^2}  + b_\ell\, \rho, 
\]
so $\hat{\beta}_\ell(v) =  {b}_\ell  \rho(v)$.
Let us further denote 
\[
\hat{\td}(v)= \td_1  \rho(v)+ \td_0, \quad 
\hat{\w}(v) = \w_1 \rho(v) + \w_0,
\]
for $\td_0, \td_1, \w_0, \w_1 \in \RR$, and let ${{c}_\ell} = {\rm quot}(\hat{\alpha}_\ell,\rho)$, i.e., $\hat{\alpha}_\ell(v) = {{c}_\ell} \rho (v) + a_ \ell$, $\ell \in \{1,2\}$.
Then
\[
 \hat{\alpha}_\ell(v) \hat{\w}(v) + \hat{\beta}_\ell(v) \hat{\td}(v) =  \left({{c}_\ell} \w_1 + {{b}_\ell} \td_1\right)
\rho^2(v) +\left({{a}_\ell} \w_1+ {{c}_\ell} \w_0 + {{b}_\ell} \td_0 \right) \rho(v) + {{a}_\ell} \w_0.
\]
Since $\gcd\left(\wt{\alpha}_1, \wt{\alpha}_2\right) = 1$, the polynomial $\rho$ can not divide both of the two polynomials $\hat{\alpha}_\ell$ (take into account that zero is divisible by any non-zero polynomial), and so at least one of ${{a}_1}, {{a}_2}$ is nonzero. Therefore, $\rho^2$ divides 
$\hat{\beta}_\ell \hat{\td} + \hat{\alpha}_\ell \hat{\w}$ iff
$\w_0 = 0$ and 
\[
\begin{bmatrix}
{{a}_1} & {{b}_1}\\ {{a}_2} & {{b}_2}
\end{bmatrix}
\begin{bmatrix}
\w_1\\ \td_0
\end{bmatrix} = 
\begin{bmatrix}
0 \\ 0
\end{bmatrix}. 
\]
If the matrix is invertible, this implies the first option, while the second option follows from computing the matrix kernel, which gives 
$\td_0 = {{a}_L} \mu_2$, $\w_1 = -{{b}_L} \mu_2$  for any $\mu_2\in \RR$, where $L\in\{1,2\}$ is chosen so that ${{a}_L} \not = 0$.
One can check easily that the condition ${{a}_2} {{b}_1} = {{a}_1} {{b}_2}$ is equivalent to $\beta\; | \;\hat{\alpha}_1 \hat{\beta}_2 - \hat{\alpha}_2 \hat{\beta}_1$, which in turn is equivalent to $\beta\; | \;{\alpha}_1 {\beta}_2 - {\alpha}_2 {\beta}_1$. This completes the proof.
\end{proof}

\subsubsection{Parabolic interface}\label{sec:parabolic}

It remains to analyze the case where the interface is a parabola - Case (c). In this case polynomial $\beta$ is an irreducible quadratic polynomial, so $\deg(\beta) = 2$. Moreover, we have $\deg(\alpha_\ell) \leq 2+ \sigma_\ell$ and $\deg(\beta_\ell) \leq 2+ \sigma_\ell$. In addition the gluing functions satisfy the following.
\begin{lemma}\label{lem:beta-divides-abba}
We have  $\beta \; | \; {\alpha}_1 {\beta}_2 - {\alpha}_2 {\beta}_1$.
\end{lemma}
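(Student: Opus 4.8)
The plan is to deduce the divisibility from the already-established identity~\eqref{rel-alpha-beta}. Reading it as $\q(\alpha_1\beta_2-\alpha_2\beta_1) = -\beta\alpha$, I would compare the multiplicity of the prime $\beta$ on both sides (in Case (c) the polynomial $\beta$ is irreducible over $\RR$, hence prime in $\RR[v]$). Writing $m(\cdot)$ for the multiplicity of $\beta$ as a factor, this gives $m(\q)+m(\alpha_1\beta_2-\alpha_2\beta_1) = 1 + m(\alpha)$, so that $m(\alpha_1\beta_2-\alpha_2\beta_1) = 1 + m(\alpha) - m(\q)$. Since $\q$ divides each $\wt{\alpha}_\ell = \det\jac{\bfm{F}^{(\ell)}}(0,\cdot)$, whose degree is at most $2+\sigma_\ell\le 3$ (the Jacobian determinant along an edge of a (bi-)quadratic map is a polynomial of degree at most $2+\sigma_\ell$), we have $\deg\q\le 3 < 4 = \deg(\beta^2)$ and therefore $m(\q)\le 1$. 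Consequently $m(\alpha_1\beta_2-\alpha_2\beta_1)\ge m(\alpha)$, and the claim follows once I show that $\beta\mid\alpha$ whenever $\beta\mid\q$; if $\beta\nmid\q$ the identity already yields $m(\alpha_1\beta_2-\alpha_2\beta_1)\ge 1$.

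The core step, and the one I expect to be the main obstacle, is showing $\beta\mid\alpha$ in the case $\beta\mid\q$, which I would argue pointwise at the two (non-real, complex conjugate) roots $v_0,\bar v_0$ of $\beta$ using isotropy rather than any formal manipulation. Extending the bilinear form to $\mathbb{C}^2$, recall from~\eqref{def-beta} and~\eqref{def-alpha-1} that $\beta = \sprod{\bfm{n}}{\bfm{n}}$ and $\wt{\alpha}_\ell = \sprod{\parDer{u}{}{\bfm{F}^{(\ell)}}(0,\cdot)}{\bfm{n}}$, where $\bfm{n}(v) = \orth{(\parDer{v}{}{\bfm{F}^{(1)}}(0,v))}$ is a nonzero linear vector polynomial because the interface is a genuine parabola. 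Then $\bfm{n}(v_0)$ cannot vanish (a common zero of its two linear components would be real), so $\beta(v_0)=\sprod{\bfm{n}(v_0)}{\bfm{n}(v_0)}=0$ makes $\bfm{n}(v_0)$ a nonzero isotropic vector, whose bilinear-orthogonal complement in $\mathbb{C}^2$ is the line it spans. From $\beta\mid\q\mid\wt{\alpha}_\ell$ we get $\sprod{\parDer{u}{}{\bfm{F}^{(\ell)}}(0,v_0)}{\bfm{n}(v_0)}=0$ for $\ell\in\{1,2\}$, which forces both $\parDer{u}{}{\bfm{F}^{(1)}}(0,v_0)$ and $\parDer{u}{}{\bfm{F}^{(2)}}(0,v_0)$ to be parallel to $\bfm{n}(v_0)$, hence to each other. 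Therefore $\alpha(v_0)=\det[\parDer{u}{}{\bfm{F}^{(2)}}(0,v_0),\parDer{u}{}{\bfm{F}^{(1)}}(0,v_0)]=0$, and the same holds at $\bar v_0$; since $\beta$ is a quadratic with the two distinct simple roots $v_0,\bar v_0$, this yields $\beta\mid\alpha$ and closes the case.

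The one delicate point throughout is that the passage from $\sprod{\parDer{u}{}{\bfm{F}^{(\ell)}}(0,v_0)}{\bfm{n}(v_0)}=0$ to parallelism uses isotropy of $\bfm{n}(v_0)$: over $\mathbb{C}$ the orthogonal complement of an isotropic line is that line itself, which is exactly why a single vanishing inner product forces collinearity. Everything else is bookkeeping with the multiplicity identity above; in particular the degree count $\deg\q\le 3$ is what rules out a double factor of $\beta$ in $\q$ and thereby limits how much of $\alpha$ one must account for.
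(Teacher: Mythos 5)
Your proof is correct, and although it shares the paper's starting point --- reading the divisibility off the identity~\eqref{rel-alpha-beta} and using that $\beta$ is irreducible, hence prime, in $\RR[v]$, which reduces everything to the subcase $\beta\mid\q$ --- it resolves that subcase by a genuinely different argument. The paper argues by contradiction entirely over the reals: if $\beta\mid\q$, then $\parDer{u}{}{\bfm{F}^{(\ell)}}(0,v)-\q^*(v)\alpha_\ell(v)\bfm{n}(v)$ is orthogonal to $\bfm{n}(v)$ for every real $v$, hence its rotation is pointwise parallel to $\bfm{n}(v)$, so $\beta$ divides each $\beta_\ell$ individually and therefore divides $\alpha_1\beta_2-\alpha_2\beta_1$ outright; no degree bound on $\q$ is needed there. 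You instead evaluate at the complex conjugate roots of $\beta$ and use isotropy of $\bfm{n}(v_0)$ to conclude $\beta\mid\alpha$, after which you must invoke $\deg\q\le 3<\deg\beta^2$ to exclude a double factor of $\beta$ in $\q$ before the multiplicity identity closes the argument. Your isotropy step is arguably cleaner than the paper's parallelism step, which tacitly uses that the proportionality factor between $\orth{(\parDer{u}{}{\bfm{F}^{(\ell)}}(0,\cdot))}-\q^*\alpha_\ell\orth{\bfm{n}}$ and $\bfm{n}$ is itself a polynomial --- a point your pointwise evaluation over $\mathbb{C}$ sidesteps entirely --- but it costs you the extra degree-of-$\q$ ingredient that the paper's route avoids. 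Two small pieces of housekeeping: the multiplicity function $m$ is undefined on the zero polynomial, so you should dispose of the trivial cases $\alpha\equiv 0$ (which forces $\alpha_1\beta_2-\alpha_2\beta_1\equiv 0$ because $\q\not\equiv 0$) and $\alpha_1\beta_2-\alpha_2\beta_1\equiv 0$ separately; and your justification that $\bfm{n}(v_0)\neq 0$ should also exclude the possibility that one component of $\bfm{n}$ vanishes identically, which is ruled out because $\beta$ would then be the square of a linear polynomial, i.e.\ Case~(b) rather than Case~(c).
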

\begin{proof}
From equality~\eqref{rel-alpha-beta} it follows that $\beta$ must divide $\q \left({\alpha}_1 {\beta}_2 - {\alpha}_2 {\beta}_1\right)$ where $\q={\rm gcd}(\wt{\alpha}_1,\wt{\alpha}_2)$. We now follow a proof by contradiction. Assume that $\beta$ does not divide ${\alpha}_1 {\beta}_2 - {\alpha}_2 {\beta}_1$. Then, since it is an irreducible polynomial, it must divide $\q$, i.e., $\q=\q^*\beta$. Consequently, 
$\wt{\alpha}_\ell = \q^* \beta \, {\alpha}_\ell$, which is equal to  
\[
\sprod{{\parDer{u}{}{\bfm{F}^{(\ell)}} (0,v)}}{\bfm{n}(v)} = \q^*(v) {\alpha}_\ell(v) \sprod{\bfm{n}(v)}{\bfm{n}(v)}.
\]
This implies that ${\parDer{u}{}{\bfm{F}^{(\ell)}} (0,v)} -  \q^*(v) {\alpha}_\ell(v) \bfm{n}(v)$ is orthogonal to $\bfm{n}(v)$ for every $v\in [0,1]$.
Since  
\[
 \beta_\ell(v) =  \sprod{\orth{(\parDer{u}{}{\bfm{F}^{(\ell)}} (0,v))}}{\bfm{n}(v)}  = 
 \sprod{\orth{(\parDer{u}{}{\bfm{F}^{(\ell)}} (0,v))} -  \q^*(v) {\alpha}_\ell(v) \orth{\bfm{n}(v)}}{\bfm{n}(v)}
\]
we see that $\beta$ also divides $\beta_\ell$, for $\ell \in \{1,2\}$, which contradicts our assumption. This completes the proof.
\end{proof}
As in Case (b) we can directly characterize the functions $\td^*$ and $\w^*$.
\begin{lemma}\label{lemma-case-c-high-low}
Suppose that we are in Case (c). Then the high degree contributions $\td^*_{high}$ and $\w^*_{high}$ of any $\C{1}$-smooth isogeometric function $\varphi$ must vanish. Furthermore, the low degree contributions $\td^*_{low}$ and $\w^*_{low}$ must satisfy $\td^*_{low} \in \poly^1_{\pd-3}$ and $\w^*_{low} \in \poly^1_{\pdw}$.
\end{lemma}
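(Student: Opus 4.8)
The plan is to mirror the structure of the analogous statement for Case (b) (Lemma~\ref{lemma-case-b-high-low}) and to argue directly from the degree bounds of the gluing functions together with the simplified conditions of Remark~\ref{rem:condition-delta-d}. Since we are in Case (c) with $\delta=2$, we have $\deg(\beta)=2$ with $\beta$ irreducible, and the gluing functions satisfy $\deg(\alpha_\ell)\leq 2+\sigma_\ell$ and $\deg(\beta_\ell)\leq 2+\sigma_\ell$ for $\ell\in\{1,2\}$. First I would record the value of $\pdt$ from~\eqref{eq:deg-t-low}: because $\deg(\beta)=2$ and $\deg(\beta_\ell)\leq 2+\sigma_\ell$, the three terms in the minimum defining $\pdt$ all evaluate (in the worst case) to $\pd-3$, so $\pdt=\pd-3$. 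Likewise $\pdw\geq\pd-3$ from~\eqref{eq:deg-w-low}, since $\deg(\alpha_\ell)\leq 2+\sigma_\ell$.

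Next I would invoke the simplified high-degree conditions from Remark~\ref{rem:condition-delta-d}, which apply because $\pd\geq 2=2\delta-2$. The first condition $\deg(\td^*_{high})\leq \pd-1-\deg(\beta)=\pd-3$ forces $\td^*_{high}=0$: by part~(2) of Theorem~\ref{main-thm-2} we have $\td^*_{high}(v)=v^{\pdt+1}q_\td(v)=v^{\pd-2}q_\td(v)$, so any nonzero $\td^*_{high}$ would already have degree at least $\pd-2>\pd-3$, contradicting the bound. This is exactly the argument used in Case (b), and the only point to verify is that the monomial shift $v^{\pdt+1}$ with $\pdt=\pd-3$ pushes the degree above the permitted threshold.

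Having established $\td^*_{high}=0$, I would then show $\w^*_{high}=0$ as well. With $\td^*_{high}=0$ the remaining high-degree condition reduces to $\deg(\alpha_\ell\,\w^*_{high})\leq \pd-1+\sigma_\ell$ for $\ell\in\{1,2\}$. Writing $\w^*_{high}(v)=v^{\pdw+1}q_\w(v)$ and using $\deg(\alpha_\ell)+\bigl(\pd-1+\sigma_\ell-\deg(\alpha_\ell)\bigr)=\pd-1+\sigma_\ell$, any nonzero $\w^*_{high}$ contributes a degree of at least $\deg(\alpha_\ell)+\pdw+1\geq \pd+\sigma_\ell$ for the index $\ell$ realizing the minimum in~\eqref{eq:deg-w-low}, which exceeds $\pd-1+\sigma_\ell$. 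Hence $\w^*_{high}=0$. The low-degree statements $\td^*_{low}\in\poly^1_{\pd-3}$ and $\w^*_{low}\in\poly^1_{\pdw}$ then follow immediately from~\eqref{eq:deg-t-low} and~\eqref{eq:deg-w-low} with the computed value $\pdt=\pd-3$.

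The argument is essentially routine once the degree bounds are in hand; I do not expect a genuine obstacle. The one subtlety, compared to Case (b), is that here $\beta$ is irreducible rather than a perfect square, so the divisibility condition~\eqref{eq:beta-div-hat-functions} governing $\hat{\td}$ and $\hat{\w}$ behaves differently—but that analysis belongs to the subsequent proposition and is not needed for this lemma, which concerns only the high- and low-degree pieces. The key point to get right is simply that the worst-case degrees of the gluing functions yield $\pdt=\pd-3$, which is what makes both high-degree contributions vanish.
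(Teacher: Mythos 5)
Your argument is correct and is essentially the paper's own proof: the paper simply states that the proof is identical to that of Lemma~\ref{lemma-case-b-high-low}, and your write-up reproduces exactly that argument (the bound $\pdt=\pd-3$ forced by $\deg(\beta)=2$ kills $\td^*_{high}$, and the degree count $\deg(\alpha_\ell)+\pdw+1=\pd+\sigma_\ell>\pd-1+\sigma_\ell$ at the minimizing index kills $\w^*_{high}$), with the low-degree claims following from \eqref{eq:deg-t-low} and \eqref{eq:deg-w-low}. The only cosmetic quibble is your phrase that the three terms in the minimum ``all evaluate (in the worst case) to $\pd-3$'': the point is rather that the first term equals $\pd-3$ exactly while the others are bounded below by $\pd-3$, which already pins down $\pdt=\pd-3$ unconditionally.
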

\begin{proof}
The proof is the same as for Lemma~\ref{lemma-case-b-high-low}.
\end{proof}
Similar to Case (b) we have to analyze the remainders $\hat{\td},\hat{\w} \in \poly^1_1$, which depend on the gluing functions. 
Due to Lemma~\ref{lem:beta-divides-abba} we have $\hat{\alpha}_1 \hat{\beta}_2 - \hat{\alpha}_2 \hat{\beta}_1 = {\rm c}\, \beta$ for some constant $c \in \RR$. We distinguish between two cases.
\begin{lemma} \label{lemma-1-div}
Consider Case (c) and suppose that $\hat{\alpha}_1 \hat{\beta}_2 - \hat{\alpha}_2 \hat{\beta}_1 = {\rm c}\, \beta$ for some nonzero constant $c$. Then~\eqref{eq:beta-div-hat-functions} holds true if and only if
\begin{equation} \label{eq-divisionWithbetaSol1}
 \hat{\td}(v) = \mu_1 \hat{\alpha}_1(v) + \mu_2 \hat{\alpha}_2(v), \quad 
 \hat{\w}(v) = -\mu_1 \hat{\beta}_1(v) - \mu_2 \hat{\beta}_2(v), 
\end{equation}
for any two free parameters $\mu_1, \mu_2 \in \RR$.
\end{lemma}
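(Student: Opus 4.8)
The statement to prove is Lemma~\ref{lemma-1-div}: under Case (c) with $\hat{\alpha}_1 \hat{\beta}_2 - \hat{\alpha}_2 \hat{\beta}_1 = c\,\beta$ for a \emph{nonzero} constant $c$, the divisibility condition \eqref{eq:beta-div-hat-functions} is equivalent to the explicit parameterization \eqref{eq-divisionWithbetaSol1}. The plan is to follow the same reduction already used in the proof of Proposition~\ref{proposition-nonuniform-edge}: condition \eqref{eq:beta-div-hat-functions} states $\beta \mid \alpha_\ell \hat{\w} + \beta_\ell \hat{\td}$ for $\ell\in\{1,2\}$, and writing $\alpha_\ell = \alpha_\ell^*\beta + \hat{\alpha}_\ell$, $\beta_\ell = \beta_\ell^*\beta + \hat{\beta}_\ell$ reduces this, exactly as in \eqref{eq-divisionWithbeta2}, to
\begin{equation*}
 \beta \;\big|\; \hat{\alpha}_\ell \hat{\w} + \hat{\beta}_\ell \hat{\td}, \quad \ell \in \{1,2\}.
\end{equation*}
Since $\hat{\td},\hat{\w} \in \poly^1_1$ and $\deg(\hat{\alpha}_\ell),\deg(\hat{\beta}_\ell) \leq 1 = \deg(\beta)-1$, each product $\hat{\alpha}_\ell \hat{\w} + \hat{\beta}_\ell \hat{\td}$ has degree at most $2 = \deg(\beta)$, so divisibility by $\beta$ forces $\hat{\alpha}_\ell \hat{\w} + \hat{\beta}_\ell \hat{\td}$ to be a scalar multiple of $\beta$.

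\textbf{Main argument.}
I would treat this as a linear-algebra problem on the coefficient vectors. View the pairs $(\hat{\td},\hat{\w})$ as a point in the $4$-dimensional space $\poly^1_1 \times \poly^1_1$. The two divisibility requirements impose, a priori, up to four linear conditions (each product has three coefficients, of which the constraint ``$\equiv$ multiple of $\beta$'' kills two degrees of freedom per $\ell$), but the hypothesis $c\neq 0$ is precisely what controls the rank. The cleanest route is to \emph{verify} that the proposed solutions \eqref{eq-divisionWithbetaSol1} satisfy the reduced condition and then a \emph{dimension count} to show there are no others. For sufficiency, substitute $\hat{\td} = \mu_1\hat{\alpha}_1 + \mu_2\hat{\alpha}_2$ and $\hat{\w} = -\mu_1\hat{\beta}_1 - \mu_2\hat{\beta}_2$ into $\hat{\alpha}_\ell \hat{\w} + \hat{\beta}_\ell \hat{\td}$; the cross terms collapse into $\mu_{3-\ell}(\hat{\alpha}_\ell\,(-\hat{\beta}_{3-\ell}) + \hat{\beta}_\ell\,\hat{\alpha}_{3-\ell})$, which for each $\ell$ equals $\pm\mu_{3-\ell}(\hat{\alpha}_1\hat{\beta}_2 - \hat{\alpha}_2\hat{\beta}_1) = \pm c\,\mu_{3-\ell}\,\beta$, a multiple of $\beta$. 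Hence \eqref{eq-divisionWithbetaSol1} always yields solutions, giving a two-dimensional solution space (the map $(\mu_1,\mu_2)\mapsto(\hat{\td},\hat{\w})$ is injective because $c\neq 0$ forces $(\hat{\alpha}_1,\hat{\beta}_1)$ and $(\hat{\alpha}_2,\hat{\beta}_2)$ to be linearly independent).

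\textbf{Closing the gap (necessity).}
For necessity I would show the solution space of the reduced condition is \emph{exactly} two-dimensional, so that the explicit family \eqref{eq-divisionWithbetaSol1} exhausts it. Writing each constraint $\hat{\alpha}_\ell \hat{\w} + \hat{\beta}_\ell \hat{\td} \in \mathrm{span}\{\beta\}$ as ``the component orthogonal to $\beta$ in $\poly^1_2$ vanishes,'' I get a homogeneous linear system of four equations in the four unknowns $(\td_0,\td_1,\w_0,\w_1)$ (the coefficients of $\hat{\td},\hat{\w}$). The hypothesis $c\neq 0$ should make this system have rank exactly $2$: indeed, if the rank were $3$ or $4$ the solution space would be smaller than the two-parameter family we already exhibited, a contradiction, and the rank cannot drop below $2$ since the constraints for $\ell=1$ and $\ell=2$ are genuinely independent when $(\hat{\alpha}_1,\hat{\beta}_1),(\hat{\alpha}_2,\hat{\beta}_2)$ are linearly independent. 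Thus the sufficiency computation combined with the upper bound $\dim \leq 4 - 2 = 2$ on the solution space pins it down completely. \textbf{The main obstacle} I anticipate is making the rank bound $\leq 2$ rigorous: one must rule out the degenerate possibility that the four scalar equations are satisfied by some pair $(\hat{\td},\hat{\w})$ \emph{not} of the form \eqref{eq-divisionWithbetaSol1}, which amounts to checking that the $2\times 2$ coefficient matrix $\bigl[\begin{smallmatrix}\hat{\alpha}_1 & \hat{\beta}_1\\ \hat{\alpha}_2 & \hat{\beta}_2\end{smallmatrix}\bigr]$ (evaluated appropriately on the basis of $\poly^1_1$) is nonsingular precisely because its ``determinant'' $\hat{\alpha}_1\hat{\beta}_2 - \hat{\alpha}_2\hat{\beta}_1 = c\beta$ does not vanish identically. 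The nonvanishing of $c$ is the crux that distinguishes this lemma from the complementary case $c=0$ that must presumably be handled separately.
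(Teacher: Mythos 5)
Your reduction to $\beta \mid \hat{\alpha}_\ell \hat{\w} + \hat{\beta}_\ell \hat{\td}$ and your observation that each such combination, having degree at most $\deg(\beta)=2$, must equal $q_\ell\,\beta$ for a scalar $q_\ell$, are exactly the paper's starting point, and your sufficiency check (the cross terms collapsing to $\pm\mu_{3-\ell}\,c\,\beta$) is correct. Where you diverge is the necessity direction: you exhibit the two-parameter family and then try to cap the solution space at dimension two by a rank argument on the $4\times 4$ homogeneous system, and you yourself flag that making the bound ``rank $\geq 2$'' rigorous is the unresolved obstacle. That obstacle is real as written --- ``the constraints for $\ell=1$ and $\ell=2$ are genuinely independent'' is not a proof, and in fact the relevant statement is not the independence of the two $\ell$-constraints from each other but the joint nonsingularity of the pair $(\hat{\alpha}_\ell,\hat{\beta}_\ell)_{\ell=1,2}$ acting on $(\hat{\w},\hat{\td})$.

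The paper closes this in one stroke and makes your separate sufficiency computation unnecessary: writing the two conditions as
$M(v)\,(\hat{\w}(v),\hat{\td}(v))^{T} = \beta(v)\,(q_1,q_2)^{T}$ with
$M = \bigl[\begin{smallmatrix}\hat{\alpha}_1 & \hat{\beta}_1\\ \hat{\alpha}_2 & \hat{\beta}_2\end{smallmatrix}\bigr]$ and $\det M = c\,\beta$, it notes that $\beta$ is irreducible over $\RR$, so $\det M(v) = c\,\beta(v) \neq 0$ for \emph{every} real $v$ (not merely ``not identically zero''), and Cramer's rule then yields the unique solution
$\hat{\td} = \tfrac{1}{c}(q_2\hat{\alpha}_1 - q_1\hat{\alpha}_2)$, $\hat{\w} = \tfrac{1}{c}(-q_2\hat{\beta}_1 + q_1\hat{\beta}_2)$,
which is precisely \eqref{eq-divisionWithbetaSol1} with $\mu_1 = q_2/c$, $\mu_2 = -q_1/c$. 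This gives both directions simultaneously: every admissible $(\hat{\td},\hat{\w})$ satisfies the system for some $(q_1,q_2)\in\RR^2$ and is therefore forced into the stated form. If you want to keep your dimension-count structure, replace the vague rank claim by exactly this uniqueness statement (pointwise invertibility of $M(v)$, or equivalently invertibility of $M$ over the field of rational functions since $\det M$ is a nonzero polynomial); with that substitution your argument is complete and equivalent to the paper's.
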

\begin{proof}
We follow a similar strategy as in Case~(b) and obtain from~\eqref{eq:beta-div-hat-functions} (and~\eqref{eq-divisionWithbeta2}) that
\begin{equation} \label{eq-divisionWithbeta3}
\hat{\alpha}_1(v) \hat{\w}(v) + \hat{\beta}_1(v) \hat{\td}(v) = \beta(v) \, q_1, \quad 
\hat{\alpha}_2(v) \hat{\w}(v) + \hat{\beta}_2(v) \hat{\td}(v) = \beta(v) \, q_2,
\end{equation}
for any $q_1, q_2 \in \RR$, which can be written in a matrix form as
\[ 
M(v)
\begin{bmatrix}
\hat{\w}(v)\\
\hat{\td}(v)
\end{bmatrix} = 
\beta(v)
\begin{bmatrix}
q_1\\
q_2
\end{bmatrix}, \quad M(v)=\begin{bmatrix}
\hat{\alpha}_1(v) &  \hat{\beta}_1(v)\\
\hat{\alpha}_2(v) &  \hat{\beta}_2(v)
\end{bmatrix},
\]
where the elements of the matrix $M(v)$ are linear polynomials and its determinant equals $\det M(v) = \hat{\alpha}_1(v) \hat{\beta}_2(v) - \hat{\alpha}_2(v) \hat{\beta}_1(v)$. 
By the assumption $\det M(v) = c\, \beta(v)$ for a nonzero contant $c$. Since $\beta$ can not have real roots, 
the solution of \eqref{eq-divisionWithbeta3} is unique. Using Cramer's rule, we obtain
\[
 \hat{\td}(v) = \frac{1}{c\, \beta(v)} \beta(v) \left(q_2 \hat{\alpha}_1(v) - q_1 \hat{\alpha}_2(v)\right), \quad
\hat{\w}(v) = \frac{1}{c\, \beta(v)} \beta(v) \left(-q_2 \hat{\beta}_1(v) + q_1 \hat{\beta}_2(v)\right),
\]
which are linear polynomials of the form \eqref{eq-divisionWithbetaSol1} where $\mu_1=\frac{q_2}{c}$, $\mu_2=-\frac{q_1}{c}$ are the two free constants. 
This completes the proof.
\end{proof}
\begin{lemma} \label{lemma-2-div}
Consider Case (c) and suppose that $\hat{\alpha}_2 \hat{\beta}_1= \hat{\alpha}_1 \hat{\beta}_2$.
Then one of $\hat{\td}$ or $\hat{\w}$ can be chosen completely free, while the other one is uniquely determined from~\eqref{eq:beta-div-hat-functions}. 
\end{lemma}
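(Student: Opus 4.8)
The plan is to phrase \eqref{eq:beta-div-hat-functions} exactly as in the proof of Lemma~\ref{lemma-1-div}, as a $2\times2$ linear system for $(\hat{\w},\hat{\td})$, but now to exploit that its determinant vanishes identically. First I would recall that, as shown in the proof of Proposition~\ref{proposition-nonuniform-edge}, condition \eqref{eq:beta-div-hat-functions} is equivalent to \eqref{eq-divisionWithbeta2}, namely $\beta \mid \hat{\alpha}_\ell \hat{\w} + \hat{\beta}_\ell \hat{\td}$ for $\ell\in\{1,2\}$, with $\hat{\td},\hat{\w}\in\poly^1_1$. In Case (c) the polynomial $\beta$ is an irreducible quadratic with no real roots, so the quotient ring $K:=\RR[v]/(\beta)$ is a field isomorphic to $\mathbb{C}$, and reduction modulo $\beta$ is a linear isomorphism $\poly^1_1\to K$, since each residue class has a unique representative of degree at most one. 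Writing $\bar{g}$ for the image of $g$ in $K$, the two divisibility conditions collapse to the single $K$-linear system $\bar{M}\,(\bar{\w},\bar{\td})^{\top}=0$, where $\bar{M}=\bigl[\begin{smallmatrix}\bar{\alpha}_1 & \bar{\beta}_1\\ \bar{\alpha}_2 & \bar{\beta}_2\end{smallmatrix}\bigr]$ is the reduction of the matrix $M$ from Lemma~\ref{lemma-1-div}.

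Next I would determine the rank of $\bar{M}$. The hypothesis $\hat{\alpha}_2\hat{\beta}_1=\hat{\alpha}_1\hat{\beta}_2$ states exactly that $\det M=\hat{\alpha}_1\hat{\beta}_2-\hat{\alpha}_2\hat{\beta}_1$ is the zero polynomial, so $\det\bar{M}=0$ and $\bar{M}$ is singular. It is not, however, the zero matrix: as already noted in the proof of Proposition~\ref{proposition-nonuniform-edge}, $\hat{\alpha}_1$ and $\hat{\alpha}_2$ cannot both vanish, since that would force $\beta$ to divide both $\alpha_1$ and $\alpha_2$ and contradict $\gcd(\alpha_1,\alpha_2)=1$. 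A nonzero polynomial of degree below $\deg\beta$ is coprime to the irreducible $\beta$, hence maps to a nonzero element, i.e.\ a unit, of the field $K$. Consequently $\bar{M}$ has rank exactly one, and its kernel is a one-dimensional $K$-subspace of $K^{2}$.

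Finally I would read off the solutions. I would pick $L\in\{1,2\}$ with $\hat{\alpha}_L\neq0$ and observe that the corresponding row automatically has a nonzero entry in the slot multiplying $\hat{\w}$: a nonzero row with $\hat{\alpha}_L=0$ would have $\hat{\beta}_L\neq0$, and then $\hat{\alpha}_1\hat{\beta}_2=\hat{\alpha}_2\hat{\beta}_1$ would force both $\hat{\alpha}_\ell$ to vanish, a contradiction. By the rank-one property the two equations are $K$-proportional, so the system is equivalent to $\bar{\alpha}_L\bar{\w}+\bar{\beta}_L\bar{\td}=0$ alone. Since $\bar{\alpha}_L$ is a unit, for every prescribed $\hat{\td}\in\poly^1_1$ there is a unique $\hat{\w}\in\poly^1_1$ with $\bar{\w}=-\bar{\alpha}_L^{-1}\bar{\beta}_L\,\bar{\td}$; thus $\hat{\td}$ may be chosen completely freely while $\hat{\w}$ is uniquely determined, and because $\bar{M}\neq0$ the two cannot both be free. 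The step that requires genuine care is the one in this last paragraph: establishing $\bar{M}\neq0$ and that the surviving row carries a unit in the $\hat{\w}$-slot, which together guarantee that exactly one of $\hat{\td},\hat{\w}$ survives as a free linear polynomial rather than collapsing to zero or leaving both unconstrained.
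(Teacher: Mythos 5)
Your proof is correct, but it takes a genuinely different route from the paper. The paper proves this lemma by an exhaustive case analysis (common linear factor of all four remainders; the proportionality patterns \eqref{cond-alpha-beta-1} and \eqref{cond-alpha-beta-2}; $\hat{\alpha}_L$ constant versus non-constant), expanding $\hat{\beta}_L$, $\hat{\td}$, $\hat{\w}$ and $\beta$ in the basis $\{1,\hat{\alpha}_L,\hat{\alpha}_L^2\}$ (or $\{1,\hat{\beta}_L,\hat{\beta}_L^2\}$) and solving the resulting $3\times 3$ linear systems by hand. You instead pass to the quotient $K=\RR[v]/(\beta)$, which is a field because $\beta$ is an irreducible real quadratic, identify $\poly^1_1$ with $K$ via the remainder map, and reduce \eqref{eq:beta-div-hat-functions} to a rank-one homogeneous system over $K$; the conclusion then falls out of elementary linear algebra over a field. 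All the supporting facts you invoke are sound: $\hat{\alpha}_1,\hat{\alpha}_2$ cannot both vanish because $\gcd(\alpha_1,\alpha_2)$ is constant, a nonzero polynomial of degree below $\deg\beta$ is a unit in $K$, and the vanishing of $\det M$ as a polynomial gives $\det \bar{M}=0$. Your argument is shorter, more conceptual, and in fact shows slightly more, namely that $\hat{\td}$ can \emph{always} be taken as the free datum (since some $\bar{\alpha}_L$ is a unit); it would also unify this lemma with Lemma~\ref{lemma-1-div}, since by Lemma~\ref{lem:beta-divides-abba} one has $\det\bar{M}=0$ in every sub-case of Case~(c), so the solution space is always one-dimensional over $K$. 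What the paper's longer computation buys, and what your proof does not deliver, are the explicit two-parameter formulas \eqref{cond-t-w-2} and \eqref{cond-t-w-3} for $\hat{\td}$ and $\hat{\w}$, which are quoted verbatim later in Algorithm~\ref{alg:TrAndW} and in Example~\ref{example-1}; to make your version usable downstream you would still need to write out $\bar{\w}=-\bar{\alpha}_L^{-1}\bar{\beta}_L\,\bar{\td}$ in coordinates, which essentially reproduces those formulas.
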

\begin{proof}
Again, we reduce~\eqref{eq:beta-div-hat-functions} to
\begin{equation} \label{eq-divisionWithbeta2-2}
 \beta \;\big|\; \hat{\alpha}_\ell \hat{\w} + \hat{\beta}_\ell \hat{\td}, \quad \ell \in \{1,2\}.
\end{equation}
Let us first consider the case when $\hat{\alpha}_1, \hat{\alpha}_2, \hat{\beta}_1, \hat{\beta}_2$ all have a common linear factor or all of them are constant, i.e.,
\[
 \hat{\alpha}_\ell(v) = a_\ell \zeta(v), \quad
\hat{\beta}_\ell(v) = b_\ell \zeta(v), \quad \ell \in \{1,2\},
\]
for $\zeta \in \poly_1$, $\zeta \not=0$, and $a_\ell, b_\ell\in \RR$, such that $a_2 b_1=a_1 b_2$.  
Conditions \eqref{eq-divisionWithbeta2-2} are then equivalent to $ \zeta(v)\left(b_\ell \hat{\td}(v) + a_\ell \hat{\w}(v)\right) = q_\ell \beta(v)$, for some $q_\ell\in\RR$, $\ell \in \{1,2\}$. Since $\beta$ is irreducible, these two equalities can hold true iff $q_1=q_2=0$.
Since $a_2 b_1=a_1 b_2$, both conditions are equivalent, and are satisfied iff 
\[
\hat{\td}(v)= a_L (\mu_1 v + \mu_2), \quad \hat{\w}(v)=- b_L (\mu_1 v + \mu_2), \quad \mu_1, \mu_2 \in \RR,
\]
where $L$ is chosen so that $a_L$ or $b_L$ is nonzero. 

Suppose now that $\hat{\alpha}_1, \hat{\alpha}_2, \hat{\beta}_1, \hat{\beta}_2$ do not have a common linear factor and not all of them are constant. 
Since $\hat{\alpha}_1, \hat{\alpha}_2, \hat{\beta}_1, \hat{\beta}_2$ are in $\poly_1$, the equality $\hat{\alpha}_2 \hat{\beta}_1= \hat{\alpha}_1 \hat{\beta}_2$ is possible only if
\begin{equation} \label{cond-alpha-beta-1}
\hat{\alpha}_\ell(v) = c \hat{\beta}_\ell(v), 
\end{equation}
or 
\begin{equation}  \label{cond-alpha-beta-2}
\hat{\alpha}_2(v) = c \hat{\alpha}_1(v), \quad \hat{\beta}_2(v) = c \hat{\beta}_1(v),  
\end{equation}
for some $c\in \RR$. 

Under the assumption \eqref{cond-alpha-beta-1}, conditions~\eqref{eq-divisionWithbeta2-2} become 
$\beta \;\big|\; \hat{\beta}_\ell \left(\hat{\td} + c \hat{\w}\right), \, \ell \in \{1,2\}.$
Since $\beta$ is irreducible, this is possible iff $\hat{\td} + c \hat{\w}=0$ or equivalently if
\[
\hat{\td}(v)= -c(\mu_1 v + \mu_2), \quad \hat{\w}(v)= \mu_1 v + \mu_2, \quad \mu_1, \mu_2 \in \RR. 
\]
If $\hat{\beta}_1 = \hat{\beta}_2 = 0$, \eqref{eq-divisionWithbeta2-2} is satisfied iff 
$ \hat{\w} = 0$, $\hat{\td}(v) = \mu_1 v + \mu_2$ for any $\mu_1, \mu_2 \in \RR$. 

Assuming \eqref{cond-alpha-beta-2},  both conditions~\eqref{eq-divisionWithbeta2-2} are the same, so it is enough to consider only one of them. Note that the same is true if $\hat{\alpha}_1 = \hat{\beta}_1 = 0$ or if
$\hat{\alpha}_2 = \hat{\beta}_2 = 0$
Thus, let us fix $\ell = L$, $L\in \{1,2\}$, so that $\hat{\alpha}_L$, $\hat{\beta}_L$  are not both identically zero. Suppose that $\hat{\alpha}_L \in \poly_1$  is not constant. Then $\{1, \hat{\alpha}_L, \hat{\alpha}_L^2\}$ is a basis of $\poly_2$. Writing $\hat{\beta}_L$, $\hat{\td}$, $\hat{\w}$ and $\beta$ in this basis, i.e., as
\[
\hat{\beta}_L(v) = b_1 \hat{\alpha}_L(v)+b_0, \quad 
\hat{\td}(v) = \td_1 \hat{\alpha}_L(v)+\td_0, \quad
\hat{\w}(v) = \w_1 \hat{\alpha}_L(v)+\w_0, \quad 
\hat{\beta}(v) = c_2 \hat{\alpha}_L^2(v)+ c_1 \hat{\alpha}_L(v) +c_0, 
\]
yields 
\[
 \hat{\beta}_L(v) \hat{\td}(v) + \hat{\alpha}_L(v) \hat{\w}(v) = 
 \hat{\alpha}_L^2(v) \left(b_1 \td_1 + \w_1\right) + 
 \hat{\alpha}_L(v) \left(b_1  \td_0 + b_0 \td_1 + \w_0\right)  + b_0 \td_0.
\] 
This expression is divisible by $\beta$ iff it is equal to $q_L \beta$ for any $q_L \in \RR$, which implies
\[
b_0 \td_0 = q_L c_0, \quad b_1 \td_0 + b_0 \td_1 + \w_0 = q_L c_1, \quad
b_1 \td_1 + \w_1 = q_L c_2.
\]
Note that $c_0\not=0$ (because $\beta$ is irreducible) and $q_L$ is a free constant. Thus, the solution of these three equations can be expressed with two free parameters $\mu_1, \mu_2 \in \RR$ as
\begin{equation}\label{cond-t-w-2}
\hat{\td}(v) = \mu_1 \hat{\alpha}_L(v) + \mu_2, \quad 
\hat{\w}(v) = \frac{1}{c_0}\left(\mu_2 b_0 c_2 - \mu_1 b_1 c_0\right) \hat{\alpha}_L(v) + 
\frac{\mu_2}{c_0} \left(b_0 c_1 - b_1 c_0\right) - \mu_1 b_0.
\end{equation}
If $\hat{\alpha}_L$  is a constant, then $\hat{\beta}_L$ must be of degree one, and $\{1, \hat{\beta}_L, \hat{\beta}_L^2\}$ can be taken as a basis of $\poly_2$. Writing $\hat{\alpha}_L$, $\hat{\td}$, $\hat{\w}$ and $\beta$ in this basis, i.e., as
\[
\hat{\alpha}_L(v) = a_1 \hat{\beta}_L(v)+a_0, \quad 
\hat{\td}(v) = \td_1 \hat{\beta}_L(v)+\td_0, \quad
\hat{\w}(v) = \w_1 \hat{\beta}_L(v)+\w_0, \quad 
\hat{\beta}(v) = c_2 \hat{\beta}_L^2(v)+ c_1 \hat{\beta}_L(v) +c_0, 
\] 
we compute in the same way that~\eqref{eq-divisionWithbeta2-2} holds true iff 
\begin{equation}\label{cond-t-w-3}
\hat{\td}(v) = \frac{1}{c_0}\left(\mu_2 a_0 c_2 - \mu_1 a_1 c_0\right) \hat{\beta}_L(v) + 
\frac{\mu_2}{c_0} \left(a_0 c_1 - a_1 c_0\right) - \mu_1 a_0, \quad 
\hat{\w}(v) = \mu_1 \hat{\beta}_L(v) + \mu_2.
\end{equation}
The proof is completed.
\end{proof}

\begin{proposition} \label{proposition-parabola}
Suppose that we are in Case (c), that is, the interface $\bE$ is a parabola, and let $\varphi$ be an isogeometric function. Then $\varphi\in \mathcal{V}^1_\pd(\Omega)$, if and only if $\td^*\in \poly^1_{\pd-3}$, $\w^*\in\poly^1_{\pdw}$ and $\hat{\td}$ and $\hat{\w}$ are given as in Lemma~\ref{lemma-1-div} or~\ref{lemma-2-div}.
\end{proposition}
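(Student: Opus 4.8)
The plan is to assemble the proposition directly from Theorem~\ref{main-thm-2} together with the three preceding lemmas, since each ingredient of the characterization has already been isolated; the statement is essentially a synthesis. I would begin by invoking Theorem~\ref{main-thm-2} in the simplified form given by Remark~\ref{rem:condition-delta-d}, which applies here because $\delta=2$ forces $\pd\geq 2\delta-2$. This reduces the membership $\varphi\in\mathcal{V}^1_\pd(\Omega)$ to the three conditions on the split $\td = \td^*_{low}\beta + \td^*_{high}\beta + \hat{\td}$ and $\w = \w^*_{low}\beta + \w^*_{high}\beta + \hat{\w}$: the low-degree bounds~(1), the high-degree degree constraints~(2), and the divisibility~(3), namely $\beta \mid \alpha_\ell\hat{\w} + \beta_\ell\hat{\td}$ for $\ell\in\{1,2\}$.

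Next I would dispose of the high- and low-degree parts using Lemma~\ref{lemma-case-c-high-low}, which already shows that in Case~(c) both high-degree contributions vanish and that $\td^* = \td^*_{low}\in\poly^1_{\pd-3}$ and $\w^* = \w^*_{low}\in\poly^1_{\pdw}$. This settles conditions~(1) and~(2) of Theorem~\ref{main-thm-2} and yields exactly the two degree statements appearing in the proposition, so that only condition~(3) remains to be characterized.

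What is left is the divisibility $\beta \mid \alpha_\ell\hat{\w} + \beta_\ell\hat{\td}$. Exactly as in Case~(b), writing $\alpha_\ell = \alpha_\ell^*\beta + \hat{\alpha}_\ell$ and $\beta_\ell = \beta_\ell^*\beta + \hat{\beta}_\ell$ shows this is equivalent to $\beta \mid \hat{\alpha}_\ell\hat{\w} + \hat{\beta}_\ell\hat{\td}$, which is precisely the hypothesis analysed in Lemmas~\ref{lemma-1-div} and~\ref{lemma-2-div}. The geometry enters here only through the constant $c$ defined by $\hat{\alpha}_1\hat{\beta}_2 - \hat{\alpha}_2\hat{\beta}_1 = c\,\beta$, whose existence is guaranteed by Lemma~\ref{lem:beta-divides-abba}: the polynomial $\hat{\alpha}_1\hat{\beta}_2 - \hat{\alpha}_2\hat{\beta}_1$ has degree at most $2$ and is divisible by the irreducible quadratic $\beta$, hence is a scalar multiple of it. I would then split on the value of $c$: if $c\neq 0$, the admissible remainders are precisely those produced by Lemma~\ref{lemma-1-div}, and if $c=0$ they are precisely those produced by Lemma~\ref{lemma-2-div}. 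Collecting the degree statements for $\td^*$ and $\w^*$ together with these two mutually exclusive possibilities for $(\hat{\td},\hat{\w})$ gives the claimed equivalence, with the corresponding $(\tr,\w)$ recovered through the reconstruction formula~\eqref{def-trAndw-Theorem1}.

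The proposition carries no genuine obstacle of its own, as the substantive work already resides in the lemmas. The only point requiring mild care is to verify that the reduction of condition~(3) to the remainders and the dichotomy on $c$ are both exhaustive and mutually exclusive, so that every $\C{1}$-smooth $\varphi$ falls under exactly one of Lemma~\ref{lemma-1-div} or~\ref{lemma-2-div}, and conversely every admissible choice there, combined with arbitrary $\td^*\in\poly^1_{\pd-3}$ and $\w^*\in\poly^1_{\pdw}$, yields a valid pair $(\tr,\w)$ and hence an element of $\mathcal{V}^1_\pd(\Omega)$.
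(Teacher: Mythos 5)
Your proposal is correct and follows essentially the same route as the paper, which likewise derives the proposition directly from Theorem~\ref{main-thm-2}, Lemma~\ref{lemma-case-c-high-low} for the degree statements, and Lemmas~\ref{lemma-1-div} and~\ref{lemma-2-div} for the remainders, with the dichotomy governed by the constant $c$ from Lemma~\ref{lem:beta-divides-abba}. Your write-up merely spells out the assembly in more detail than the paper's one-line proof.
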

\begin{proof}
 This statement follows directly from Theorem~\ref{main-thm-2} together with Lemmas~\ref{lemma-case-c-high-low},~\ref{lemma-1-div} and~\ref{lemma-2-div}.
\end{proof}

\subsubsection{Summary}\label{sec:summary}

Since the analysis given in the previous subsections is quite technical, we summarize here the main steps in the construction of the trace function $\tr$ and normal derivative $\w$ using Algorithm~\ref{alg:TrAndW}. 
Moreover, we explicitly denote and store the parameters of freedom that correspond to $\tr$ and $\w$ in the sets $\mbox{FreePar}_\tr$ and $\mbox{FreePar}_\w$, respectively. In case of a parabolic interface, the two parameters of freedom $\mu_1$ and $\mu_2$, given 
by Lemmas~\ref{lemma-1-div} and \ref{lemma-2-div}, can be assigned for some special cases only to the trace $\tr$ or only to the normal derivative~$\w$. Thus we store them separately in the set $\mbox{FreePar}_\mu$. The algorithm also returns values
$n_\tr = |\mbox{FreePar}_\tr|$ and $n_\w = |\mbox{FreePar}_\w|$ which give the number of degrees of freedom corresponding to $\tr$ and $\w$, respectively, as well as $n_\mu = |\mbox{FreePar}_\mu|$.

\begin{algorithm}
\setstretch{1.2}
\SetKwInput{KwData}{Input}
\SetKwInput{KwResult}{Output}
\SetAlgoLined
\uIf{interface is linear and uniformly parameterized}
{
{\bf Go to } Proposition~\ref{proposition-linear-edge}\;
\uIf{\eqref{eq:Proposition3-case1} is satisfied}
{
$\tr(v) \leftarrow \sum_{i=0}^{\pd-1} \tr_i \bern{\pd-1}{i}(v)$, $\mbox{FreePar}_\tr = \set{\tr_0, \tr_1, \dots, \tr_{\pd-1}}$, $n_\tr = \pd$\;
$\w(v) \leftarrow \sum_{i=0}^{\pdw} \w_i \bern{\pdw}{i}(v)$, $\mbox{FreePar}_\w = \set{\w_0, \w_1, \dots, \w_{\pdw}}$, $n_\w = \pdw+1$\;
}
\Else
{
$\tr(v) \leftarrow \sum_{i=0}^{\pd} \tr_i \bern{\pd}{i}(v)$, $\mbox{FreePar}_\tr = \set{\tr_0, \tr_1, \dots, \tr_{\pd}}$, $n_\tr = \pd+1$\;
compute $\w_{\pdw+1}$ as stated in~\eqref{eq:linear-constraint-w-tr}\;
$\w(v) \leftarrow \sum_{i=0}^{\pdw+1} \w_i \bern{\pdw+1}{i}(v)$, $\mbox{FreePar}_\w = \set{\w_0, \w_1, \dots, \w_{\pdw}}$, $n_\w = \pdw+1$\;
}
$\mbox{FreePar}_{\mu} = \set{}$, $n_\mu=0$\;
}
\uElseIf{interface is linear and non-uniformly parameterized}
{
{\bf Go to } Proposition~\ref{proposition-nonuniform-edge}\;
$\td^*(v) \leftarrow \sum_{i=0}^{\pd-3} \td_i \bern{\pd-3}{i}(v)$, $ \w^*(v) \leftarrow \sum_{i=0}^{\pdw} \w_i \bern{\pdw}{i}(v)$\;
\uIf{$\beta \nmid {\alpha}_1 {\beta}_2 - {\alpha}_2 {\beta}_1$}
{
Compute $\hat{\td}$ and $\hat{\w}$ as stated in Proposition~\ref{proposition-nonuniform-edge}, case~(1)\;
$\mbox{FreePar}_\tr = \set{\tr_0, \td_0,  \td_1, \dots, \td_{\pd-3}, \mu_1}$, $n_\tr = \pd$\;
$\mbox{FreePar}_\w = \set{\w_0, \w_1, \dots, \w_{\pdw}}$, $n_\w = \pdw+1$\;
}
\Else
{
Compute $\hat{\td}$ and $\hat{\w}$ as stated in Proposition~\ref{proposition-nonuniform-edge}, case~(2)\;
$\mbox{FreePar}_\tr = \set{\tr_0, \td_0,  \td_1, \dots, \td_{\pd-3}, \mu_1, \mu_2}$, $n_\tr = \pd+1$\;
$\mbox{FreePar}_\w = \set{\w_0, \w_1, \dots, \w_{\pdw}}$, $n_\w = \pdw+1$\;
}
$\mbox{FreePar}_{\mu} = \set{}$, $n_\mu=0$\;
Compute $\tr$ and $\w$ by \eqref{def-trAndw-Theorem1}\;
}
\ElseIf{interface is parabolic}
{

{\bf Go to } Proposition~\ref{proposition-parabola}\;
$\td^*(v) \leftarrow \sum_{i=0}^{\pd-3} \td_i \bern{\pd-3}{i}(v)$, $ \w^*(v) \leftarrow \sum_{i=0}^{\pdw} \w_i \bern{\pdw}{i}(v)$\;

\uIf{$\hat{\alpha}_2 \hat{\beta}_1 \neq \hat{\alpha}_1 \hat{\beta}_2$}
{
Compute $\hat{\td}$ and $\hat{\w}$ as stated in Lemma~\ref{lemma-1-div}\;
}
\Else
{
Compute $\hat{\td}$ and $\hat{\w}$ as stated in Lemma~\ref{lemma-2-div}\;
}
$\mbox{FreePar}_\tr = \set{\tr_0, \td_0,  \td_1, \dots, \td_{\pd-3}}$, $n_\tr = \pd-1$\;
$\mbox{FreePar}_\w = \set{\w_0, \w_1, \dots, \w_{\pdw}}$, $n_\w = \pdw+1$\;
$\mbox{FreePar}_{\mu} = \set{\mu_1, \mu_2}$, $n_\mu=2$\;
Compute $\tr$ and $\w$ by \eqref{def-trAndw-Theorem1}\;

}
\KwResult{
$\set{\tr, \w, \mbox{FreePar}_\tr, \mbox{FreePar}_\w,\mbox{FreePar}_{\mu}, n_\tr, n_\w,n_\mu}$
}
\caption{Compute $\tr$-$\w$-basis}
\label{alg:TrAndW}
\end{algorithm}

\section{Dimension and basis for the $\C{1}$-smooth isogeometric space over (bi-)quadratic elements}\label{sec:basis}

In this section we show how a basis for the $\C{1}$-smooth isogeometric space $\mathcal{V}^1_\pd(\Omega)$ can be constructed in a geometrically intuitive way that could be extended to construct splines over more than two elements. Before that, we give bounds on the dimension of the space $\mathcal{V}^1_\pd(\Omega)$, cf. \cite[Proposition~4.6]{MoViVi16}, which presents a similar dimension count. Note that \cite[Proposition~4.6]{MoViVi16} covers a more general setting than the following corollary. The difference lies in the description of the dimension in terms of geometric properties and simple conditions on the gluing data, while the approach in \cite{MoViVi16} requires the computation of a basis for a syzygy module, which makes a geometric interpretation more difficult.
\begin{corollary}
Let $\mathcal{V}^1_\pd(\Omega)$ be the $\C{1}$-smooth isogeometric space over two elements. Its dimension and the upper bound for the dimension equal
\[
 \mathrm{dim}(\mathcal{V}^1_\pd(\Omega)) = D_0 + 2\pd +  \min_{\ell\in\{1,2\}}\{ \sigma_\ell - \deg({\alpha}_\ell) \} + \kappa \leq D_0 + 2\pd + 1 + \min_{\ell\in\{1,2\}}\{ \sigma_\ell \},
\]
where $D_0$ is defined as in~\eqref{eq-D0}, $\kappa\in\{0,1\}$, with 
\begin{itemize}
 \item $\kappa=0$, in case of Proposition~\ref{proposition-linear-edge}~(1), or Proposition~\ref{proposition-nonuniform-edge}~(1), 
 \item $\kappa=1$, in case of Proposition~\ref{proposition-linear-edge}~(2), Proposition~\ref{proposition-nonuniform-edge}~(2), or Proposition~\ref{proposition-parabola}. 
\end{itemize}
In Case (a) and (b) we have $0 \leq \deg({\alpha}_\ell) \leq 1+\sigma_\ell$ whereas in Case (c) we have $0 \leq \deg({\alpha}_\ell) \leq 2+\sigma_\ell$. Thus, we obtain for all cases the lower bound
\[
 \mathrm{dim}(\mathcal{V}^1_\pd(\Omega)) \geq D_0 + 2\pd -1
\]
for the dimension of the $\C{1}$-smooth isogeometric space.
\end{corollary}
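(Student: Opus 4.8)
The plan is to split the dimension count into two independent contributions---the interior tails and the interface data---and then to collect the case-by-case freedom counts obtained in Propositions~\ref{proposition-linear-edge},~\ref{proposition-nonuniform-edge} and~\ref{proposition-parabola} into a single formula, from which both bounds follow by elementary estimates on $\deg(\alpha_\ell)$.

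First I would invoke Theorem~\ref{main-thm-1}. By the expansion~\eqref{eq:expansion-f-ell}, every $\varphi \in \mathcal{V}^1_\pd(\Omega)$ is uniquely described by a pair $(\tr,\w)$ satisfying {\bf A1}--{\bf A2}, which fixes $\eta_\ell = r_\ell/\beta$, together with the freely chosen tails $R_\ell$ ranging over $\poly^2_{\pd-2}$ or $\poly^2_{\pd-2,\pd}$ according to $\sigma_\ell$; conversely any such data yields a $\C{1}$-smooth function. Since $(\tr,\w,R_1,R_2)\mapsto\varphi$ is linear and bijective onto $\mathcal{V}^1_\pd(\Omega)$, the dimension splits as
\[
\mathrm{dim}(\mathcal{V}^1_\pd(\Omega)) = D_0 + N_{\mathrm{int}},
\]
where $D_0$ from~\eqref{eq-D0} collects $\mathrm{dim}\,\poly^2_{\pd-2}$ and $\mathrm{dim}\,\poly^2_{\pd-2,\pd}$, and $N_{\mathrm{int}} = n_\tr + n_\w + n_\mu$ counts the admissible traces and normal derivatives.

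Next I would read off $N_{\mathrm{int}}$ from the three propositions (equivalently from the bookkeeping in Algorithm~\ref{alg:TrAndW}). In every $\kappa=0$ branch one finds $n_\tr=\pd$, $n_\w=\pdw+1$, $n_\mu=0$; in every $\kappa=1$ branch one finds $N_{\mathrm{int}}=\pd+\pdw+2$, namely $n_\tr=\pd+1$, $n_\w=\pdw+1$ in Cases~(a) and~(b), and $n_\tr=\pd-1$, $n_\w=\pdw+1$, $n_\mu=2$ in Case~(c). In all cases this unifies to $N_{\mathrm{int}} = \pd+\pdw+1+\kappa$. Substituting the definition~\eqref{eq:deg-w-low}, i.e. $\pdw+1=\pd+\min_{\ell}\{\sigma_\ell-\deg(\alpha_\ell)\}$, yields $N_{\mathrm{int}}=2\pd+\min_{\ell}\{\sigma_\ell-\deg(\alpha_\ell)\}+\kappa$ and hence the claimed exact formula. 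For the upper bound I would use $\deg(\alpha_\ell)\ge 0$, so $\min_{\ell}\{\sigma_\ell-\deg(\alpha_\ell)\}\le\min_\ell\{\sigma_\ell\}$, together with $\kappa\le 1$. For the lower bound, in Cases~(a) and~(b) the bound $\deg(\alpha_\ell)\le 1+\sigma_\ell$ gives $\min_\ell\{\sigma_\ell-\deg(\alpha_\ell)\}\ge -1$ with $\kappa\ge 0$, while in Case~(c) the weaker $\deg(\alpha_\ell)\le 2+\sigma_\ell$ gives only $\ge -2$, but there Proposition~\ref{proposition-parabola} forces $\kappa=1$; so $\min_\ell\{\sigma_\ell-\deg(\alpha_\ell)\}+\kappa\ge -1$ in every case.

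The only genuinely delicate step I expect is the additivity $\mathrm{dim}=D_0+N_{\mathrm{int}}$: one must verify that the interface data $(\tr,\w)$ and the interior tails $R_\ell$ form a true direct-sum decomposition with no hidden coupling, and, within $N_{\mathrm{int}}$ for Case~(c), that the parameters $\mu_1,\mu_2$---which enter both $\hat\td$ and $\hat\w$---are independent of the quotient parameters of $\td^*$ and $\w^*$. Both facts rest on the uniqueness of the quotient--remainder splitting by $\beta$; once they are secured, the remaining bound manipulations are routine given the degree inequalities on $\alpha_\ell$ and the value of $\kappa$ per case.
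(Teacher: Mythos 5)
Your proposal is correct and follows exactly the argument the paper intends: the corollary is the bookkeeping $\mathrm{dim} = D_0 + n_\tr + n_\w + n_\mu$ from Theorem~\ref{main-thm-1} and Algorithm~\ref{alg:TrAndW}, with $n_\tr + n_\w + n_\mu = \pd + \pdw + 1 + \kappa$ in every branch and $\pdw + 1 = \pd + \min_{\ell}\{\sigma_\ell - \deg(\alpha_\ell)\}$ from~\eqref{eq:deg-w-low}. Your case-by-case counts and the degree estimates for the two bounds (including the observation that Case~(c) forces $\kappa=1$, which rescues the lower bound there) all match the paper.
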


We can also derive the B\'ezier representation of $\C{1}$-smooth isogeometric functions.
\begin{corollary}
Let $\varphi \in \mathcal{V}_\pd^1(\Omega)$ be a $\C{1}$-smooth isogeometric function. The functions $f^{(\ell)} = \varphi \circ \bfm{F}^{(\ell)}$ are expressed in the Bernstein basis as
\[
f^{(\ell)}(u,v) = \begin{cases}
\displaystyle{\sum_{0\leq i+j\leq \pd} {b}^{(\ell)}_{i,j} \bernT{\pd}{i,j}(u,v)}, & {\rm if} \quad
\mathcal{D}^{(\ell)}= \parDomT\\[2mm]
\displaystyle{\sum_{i,j=0}^\pd {b}^{(\ell)}_{i,j} \bernB{ \scalebox{0.6}{$\Box$}, \pd}{i,j}(u,v)}, &   {\rm if} \quad \mathcal{D}^{(\ell)}= \parDomQ
\end{cases}.
\]
We then have that $\binom{\pd}{2}$ B\'ezier coefficients ${b}^{(\ell)}_{i,j}$, for $i\geq 2$ and $i+j\leq\pd$, in case of a triangular element, and $(\pd-1)(\pd+1)$ B\'ezier coefficients ${b}^{(\ell)}_{i,j}$, for $2\leq i\leq d$ and $0\leq j\leq d$, in case of a quadrilateral element can be chosen completely free. This results in $D_0$ degrees of freedom, that have no influence on the $\C{1}$ conditions at the interface.

Then the remaining B\'ezier coefficients of $f^{(\ell)}$ are given such that $b^{(1)}_{0,j}=b^{(2)}_{0,j}$, for $j=0,1,\dots, \pd$, are the B\'ezier coefficients of the function $\tr$ and 
\[
 b^{(\ell)}_{1,j} = b^{(\ell)}_{0,j} + \frac{1}{\pd} \eta_{\ell,j}, \quad j=0,1,\dots, \pd-1+\sigma_\ell,
\]
where $\eta_{\ell,j}$ are the B\'ezier coefficients of the function $\eta_\ell$, i.e.,
\[
\eta_\ell(v) = \sum_ {j=0}^{d-1+\sigma_\ell} \eta_{\ell,j} \bern{\pd-1+\sigma_\ell}{j}(v),
\] 
as defined in Theorem~\ref{main-thm-1}.
\end{corollary}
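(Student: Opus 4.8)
The plan is to read everything off from the expansion~\eqref{eq:expansion-f-ell} furnished by Theorem~\ref{main-thm-1}, namely $f^{(\ell)}(u,v)=\tr(v)+u\,\eta_\ell(v)+u^2 R_\ell(u,v)$, by translating this $u$-graded decomposition into the Bernstein--B\'ezier net of $f^{(\ell)}$. The three ingredients $\tr$, $\eta_\ell$ and $R_\ell$ should correspond, respectively, to the control points with $i=0$, to those with $i=1$ (once the $i=0$ layer is fixed), and to those with $i\geq 2$; exhibiting this correspondence as a linear bijection is essentially the entire content of the statement, so the work splits into a \emph{free part} ($i\geq 2$) and a \emph{determined part} ($i\in\{0,1\}$).

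First I would handle the free coefficients. Using the identity $u^2\,\bernT{\pd-2}{a,b}(u,v)=\tfrac{(a+2)(a+1)}{\pd(\pd-1)}\bernT{\pd}{a+2,b}(u,v)$ for the triangle, and its univariate tensor-product analogue $u^2\,\bern{\pd-2}{a}(u)=\tfrac{(a+2)(a+1)}{\pd(\pd-1)}\bern{\pd}{a+2}(u)$ for the quadrilateral, the term $u^2 R_\ell$ expands solely into basis functions whose first index satisfies $i\geq 2$, and the map from the coefficients of $R_\ell$ to these B\'ezier coefficients is an invertible rescaling. Since $R_\ell$ ranges freely over $\poly^2_{\pd-2}$ or $\poly^2_{\pd-2,\pd}$ and does not enter the $\C{1}$ conditions (which by Theorem~\ref{main-thm-1} constrain only $\tr$ and $\eta_\ell$), the coefficients with $i\geq 2$ are free. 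A direct count then gives $\binom{\pd}{2}$ of them per triangular element and $(\pd-1)(\pd+1)$ per quadrilateral element, and summing over $\ell\in\{1,2\}$ reproduces exactly the quantity $D_0$ from~\eqref{eq-D0}.

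Next I would pin down the determined coefficients from the two lowest-order $u$-data along the interface. Setting $u=0$ and using $\bernT{\pd}{0,j}(0,v)=\bern{\pd}{j}(v)$ (resp.\ $\bern{\pd}{0}(0)=1$) gives $f^{(\ell)}(0,v)=\sum_j b^{(\ell)}_{0,j}\bern{\pd}{j}(v)=\tr(v)$, so the $i=0$ coefficients are precisely the degree-$\pd$ B\'ezier coefficients of $\tr$ and therefore coincide for $\ell=1,2$. Differentiating in $u$ and using $\tfrac{d}{du}\bern{\pd}{i}(u)=\pd\big(\bern{\pd-1}{i-1}(u)-\bern{\pd-1}{i}(u)\big)$ (so that at $u=0$ only the rows $i=0$ and $i=1$ survive, with weights $-\pd$ and $+\pd$), together with the triangular analogue $\parDer{u}{}{\bernT{\pd}{i,j}}=\pd\big(\bernT{\pd-1}{i-1,j}-\bernT{\pd-1}{i,j}\big)$, yields
\[
\eta_\ell(v)=\parDer{u}{}{f^{(\ell)}}(0,v)=\pd\sum_{j}\left(b^{(\ell)}_{1,j}-b^{(\ell)}_{0,j}\right)\bern{\pd-1+\sigma_\ell}{j}(v),
\]
and comparing coefficients in this degree-$(\pd-1+\sigma_\ell)$ Bernstein form gives the claimed relation $b^{(\ell)}_{1,j}=b^{(\ell)}_{0,j}+\tfrac{1}{\pd}\eta_{\ell,j}$.

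The one point demanding care --- and the only place where the triangle/quadrilateral dichotomy genuinely enters the formulas --- is the $v$-degree of $\parDer{u}{}{f^{(\ell)}}(0,v)$. For a triangle only the indices $j=0,\dots,\pd-1$ contribute, because the coefficient $b^{(\ell)}_{0,\pd}$ multiplies $v^{\pd}$, which carries no $u$-dependence and hence drops out under $\parDer{u}{}{}$; for a quadrilateral the full range $j=0,\dots,\pd$ appears. This is exactly why $\eta_\ell\in\poly^1_{\pd-1+\sigma_\ell}$ and why the index $j$ in the final relation runs up to $\pd-1+\sigma_\ell$. I expect the main obstacle to be precisely this bookkeeping --- keeping the Bernstein degrees and index ranges consistent across the two cases so that the unified expression with $\bern{\pd-1+\sigma_\ell}{j}$ is justified in both --- while the remaining steps are a routine change of basis.
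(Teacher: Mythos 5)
Your proposal is correct and follows the same route as the paper's (very terse) proof: the coefficients with $i\geq 2$ correspond bijectively to $R_\ell$ and are free, while the $i=0$ and $i=1$ layers are determined by the identities $f^{(\ell)}(0,\cdot)=\tr$ and $\parDer{u}{}{f^{(\ell)}}(0,\cdot)=\eta_\ell$. Your Bernstein-degree bookkeeping (including the $\sigma_\ell$ distinction and the rescaling identities for $u^2 R_\ell$) is accurate and simply makes explicit the computations the paper leaves to the reader.
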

\begin{proof}
Clearly, the B\'ezier coefficients $b^{(\ell)}_{i,j}$ that are assumed to be freely chosen have no influence on $\C{1}$-smoothness over the common interface. The B\'ezier coefficients $b^{(\ell)}_{i,j}$ with the first index $i\in\{0,1\}$ are uniquely computed from polynomial identities \eqref{cond-Tr} and
$\parDer{u}{}{f^{(\ell)}} (0, v)= \eta_\ell(v)$. 
\end{proof}

Using Propositions~\ref{proposition-linear-edge},~\ref{proposition-nonuniform-edge} and~\ref{proposition-parabola} together with Algorithm~\ref{alg:TrAndW} one way to derive linearly independent $\C{1}$-smooth isogeometric functions that correspond to the interface is to connect them to independent free parameters given in $\mbox{FreePar}_\tr$, $\mbox{FreePar}_\w$ and $\mbox{FreePar}_\mu$. Namely, a basis function associated to a free parameter is defined by assigning a nonzero value to that parameter and zero values to all other free parameters. It is straightforward to see from the representation of $\tr$ and $\w$ that the obtained functions are indeed linearly independent. Let us call this basis the $\tr$-$\w$-basis. From the $\tr$-$\w$-basis we can construct another basis by defining the interpolation conditions on the trace~$\tr$ and on the normal derivative~$\w$ in such a way that the problem is uniquely solvable, i.e., the free coefficients of $\tr$ and $\w$, given in $\mbox{FreePar}_\tr$, $\mbox{FreePar}_\w$ and $\mbox{FreePar}_\mu$, are uniquely determined.  
The corresponding collocation matrix consists of rows obtained by applying the chosen interpolation functionals on the elements of the $\tr$-$\w$-basis.  The set of new basis functions can then be determined by setting the value of one interpolation data to a nonzero value and all other values to zero, and by repeating this through all interpolation conditions. 

Below, we present a method for the construction of a set of basis functions, which uses just the interpolation approach for a linear interface, and combines both approaches for a parabolic interface. To start, let $\lambda_t^{(\ell)}$ denote the linear functional defined on a set of differentiable univariate functions as 
\[ 
\lambda_t^{(\ell)} f:= f^{(\ell)}(t),  \mbox{ }\ell \in \NN_0.
\]
Furthermore, recall that $n_{\tr}$ and $n_{\w}$ denote the degrees of freedom for the construction of the trace~$\tr$ and of the normal derivative~$\omega$, respectively, and let $K \in \NN_0$ with $K \leq \min (\lfloor\frac{n_{\tr}-2}{2} \rfloor,\lfloor\frac{n_{\omega}}{2} \rfloor)$. We first include the following $4K+2$ interpolation functionals
\begin{subequations} \label{eq:int_pol}
\begin{equation} \label{eq:int_pol_1}
\lambda_0^{(\ell)} \tr,\; \lambda_1^{(\ell)} \tr, \quad \ell=0,1,\dots, K, \quad {\rm and} \quad \lambda_0^{(\ell)} \w, \; \lambda_1^{(\ell)} \w, \quad \ell=0,1, \dots, K-1,
\end{equation}
in the interpolation problem. Note that their interpolation values can be uniquely determined by prescribing $\C{K}$ interpolation conditions at the boundary vertices of the interface (see e.g. \cite[Proof of Theorem~1]{GrKaKnTaVi2020}).  Afterwards, we choose parameters~$\widetilde{t}_{i},\widehat{t}_{j} \in (0,1)$, for $i=1,\ldots,n_{\tr}-2K-2$, and $j=1,\ldots,n_{\omega}-2K$, e.g., uniformly given by 
\[
 \widetilde{t}_{i} = \frac{i}{n_{\tr}-2K-1}, \quad \mbox{and} \quad 
 \widehat{t}_{j} = \frac{j}{n_{\w}-2K+1},
\]
respectively, and include the interpolation functionals 
\begin{equation} \label{eq:int_pol_2}
 \lambda^{(0)}_{\widetilde{t}_i} \tr, \quad i=1,\ldots,n_{\tr}-2K-2, \quad \mbox{and} \quad
 \lambda^{(0)}_{\widehat{t}_j} \w, \quad j=1,\ldots,n_{\omega}-2K,
\end{equation}
\end{subequations} 
in the interpolation problem. Then, a set of basis functions is given via the interpolation problem~\eqref{eq:int_pol} by constructing the basis functions in such a way that in each case one interpolation data is set to nonzero and all others are set to zero. 
A good and practical choice for $K$, which we also follow in this work, is to select $K$ as large as possible, i.e., $K=\min (\lfloor\frac{n_{\tr}-2}{2} \rfloor,\lfloor\frac{n_{\omega}}{2} \rfloor)$.

In the case of a linear interface the described construction uniquely determines all the basis functions that correspond to the interface. However, if the interface is a parabola, we get two additional parameters of freedom, $\mu_1$ and $\mu_2$ 
(stored in a set $\mbox{FreePar}_\mu$). As we already commented, these two parameters can be for some configurations of meshes involved only in the trace $\tr$ or only in the normal derivative $\w$. Thus, we define the two basis functions that correspond to these 
two degrees of freedom by first assigning a nonzero value to the parameter $\mu_1$ and a zero value to $\mu_2$, and vice-versa. Moreover, for these two basis functions the remaining parameters $\mbox{FreePar}_\tr$ and $\mbox{FreePar}_\w$ are computed
from the interpolation problem~\eqref{eq:int_pol} where all the interpolation values are set to zero. Note that this is not equivalent to setting all of these parameters to zero.  
Since in the case of a parabolic interface $n_\tr = d-1$ and $n_\w = \pdw + 1 \geq \pd-2$, we 
get that $K= \lfloor\frac{\pd - 3}{2} \rfloor$. If also $\pdw = \pd-3$ (the generic case), then 
the interpolation functionals \eqref{eq:int_pol} are equal to
\begin{subequations}\label{eq-IntFun-PI-d}
\begin{equation} \label{eq-IntFun-PI-even-d}
\lambda_0^{(j)} \tr, \quad \lambda_1^{(j)} \tr, \quad j=0,1,\dots, k-2, \quad 
\lambda_0^{(j)} \w, \quad \lambda_1^{(j)} \w, \quad j=0,1,\dots, k-3, \quad
\lambda_{1/2}^{(0)} \tr, \quad \lambda_{1/3}^{(0)} \w,  \quad \lambda_{2/3}^{(0)} \w,
\end{equation}
if $\pd=2k$ is even, or to
\begin{equation} \label{eq-IntFun-PI-odd-d}
\lambda_0^{(j)} \tr, \quad \lambda_1^{(j)} \tr, \quad j=0,1,\dots, k-1, \quad 
\lambda_0^{(j)} \w, \quad \lambda_1^{(j)} \w, \quad j=0,1,\dots, k-2, \quad
\lambda_{1/2}^{(0)} \w,
\end{equation}
if $\pd=2k+1$ is odd.  
\end{subequations}

Observing Lemmas~\ref{lemma-1-div} and~\ref{lemma-2-div} we note that the basis functions that correspond to $\mu_1$ and $\mu_2$ can be almost linearly dependent if the mesh is such that we are close to the case where $\hat{\alpha}_1 \hat{\beta}_2 = \hat{\alpha}_2 \hat{\beta}_1$ in which the dimension stays the same but the way the remainders $\hat{\td}$ and $\hat{\w}$ are computed is different. To avoid the instability that can follow from these two basis functions we propose to apply the Gram--Schmidt algorithm (with respect to a scalar product defined as the integral over the domain)  on these two functions to make them mutually orthogonal. Since this procedure demands just a computation of two norms and one scalar product, we propose to always apply this orthogonalization. 

In the next section some numerical examples of mesh elements for the cases described previously are provided. We also show, for the parabolic interface, examples of basis functions, constructed as described using~\eqref{eq-IntFun-PI-d}.

\section{Configurations of two mesh elements and corresponding $\C{1}$-smooth isogeometric bases} \label{sec:examples}

In the following we consider several examples of pairs of elements and construct, for some of them, $\C{1}$-smooth isogeometric basis functions over them.

\subsection{Different configurations of two mesh elements} 

We always assume to have one triangular element $\Omega^{(1)}$ and one quadrilateral element $\Omega^{(2)}$, with control points $\cF^{(1)}$ and $\cF^{(2)}$, respectively, as in~\eqref{eq:control-pts-elements}.
\begin{example} \label{example-1}
As the first example let us take
\begin{equation} \label{example-1-C1And2}
\cF^{(1)} = \begin{bmatrix}
(0,0) & \left(\frac{1}{4}, \frac{1}{2}\right) & \left(0,1\right)\\[1mm]
\left(x_1,y_1\right) & \left(\frac{3}{4}, 1\right) &\\[1mm]
\left(\frac{6}{5}, \frac{3}{4}\right)
\end{bmatrix}, \quad
\cF^{(2)} = \begin{bmatrix}
\left(0,0\right) & \left(\frac{1}{4},\frac{1}{2}\right) & (0,1) \\[1mm]
 \left(-\frac{2}{3},-\frac{1}{5}\right) & \left(x_2,y_2\right) & \left(-\frac{7}{10},\frac{6}{5}\right) \\[1mm]
 (-1,0) & \left(-\frac{5}{4},\frac{1}{2}\right) & (-1,1)
\end{bmatrix}, 
\end{equation}
for which $\beta(v) = \left(v-\frac{1}{2}\right)^2+1$.
For $\left(x_1,y_1\right) = \left(\frac{1}{2}, -\frac{1}{5}\right)$ and $\left(x_2,y_2\right) = \left(-\frac{1}{2},\frac{2}{3}\right)$ (see Figure~\ref{fig:example1}, left)
we compute that 
\begin{linenomath}\begin{align*}
& \wt{\alpha}_1(v) =\frac{1}{10} \left(14 v^2-11 v+12\right), \quad 
\wt{\alpha}_2(v) =\frac{1}{15} \left(-10 v^3+31 v^2-22 v-17\right),\\
& \beta_1(v) =\frac{1}{10} (4 v+1),\quad \beta_2(v) = \frac{1}{30} \left(-8 v^3-6 v^2+79 v-32\right), 
\end{align*}\end{linenomath}
and $\q=\gcd(\wt{\alpha}_1,\wt{\alpha}_2)=1$, $\pdw = d-1$. Moreover,
\[
\hat{\alpha}_1(v) = \frac{1}{20} (6 v-11),\quad \hat{\alpha}_2(v) = \frac{1}{60} (46 v-173), \quad \hat{\beta}_1(v) = \frac{1}{10} (4 v+1), \quad \hat{\beta}_2(v) = \frac{1}{60} (150 v-29)
\]
and $\hat{\alpha}_2(v) \hat{\beta}_1(v)- \hat{\alpha}_1(v) \hat{\beta}_2(v) = -\frac{133}{300} \beta(v)$, so the remainders $\hat{\td}$ and $\hat{\w}$ are given by Lemma~\ref{lemma-1-div}. The number of degrees of freedom corresponding to the interface is thus equal to $2d-1$.

If we change $y_2$ to $y_2 = \frac{1}{300} (154-225 x_2)$, then $\hat{\alpha}_2(v) = -\frac{25}{2} x_2 \hat{\alpha}_1(v)$, $\hat{\beta}_2(v) = -\frac{25}{2} x_2 \hat{\beta}_1(v)$, and thus $\hat{\td}$ and $\hat{\w}$ are expressed by \eqref{cond-t-w-2} or equivalently by \eqref{cond-t-w-3}.
The number of degrees of freedom corresponding to the interface remains equal to $2d-1$. Figure~\ref{fig:example1} (right) shows the line on which we can choose the point 
$ (x_2,y_2)$ to get to this special case, together with the quadrilateral mesh for $(x_2, y_2) = \left(-\frac{2}{5},\frac{61}{75}\right)$.

It is easy to compute that $\wt{\alpha}_1$ and $\wt{\alpha}_2$ would have a common linear factor $v-\xi$ for some $\xi\in \RR$ iff 
\begin{equation} \label{example1-lines-c}
y_1 = \frac{\xi  (2 \xi +1)}{2(2 \xi ^2-3 \xi +1)} + \frac{2}{1-2 \xi } x_1, \quad  y_2 = \frac{30 \xi ^3-59 \xi ^2+28 \xi -17}{30 \xi  \left(2 \xi ^2-3 \xi +1\right)} + \frac{2}{1-2 \xi } x_2.
\end{equation}
Further, for 
\begin{equation} \label{example1-lines-c1}
x_2 = \frac{ 2 \xi ^3+39 \xi ^2+84 \xi -17 + (4 \xi ^3+54 \xi ^2-228 \xi +170)x_1}{-150 \xi  \left(4 \xi ^2-5 \xi +1\right) + 600\xi (\xi -1)^2 x_1}
\end{equation}
we get that $\hat{\alpha}_2 \hat{\beta}_1 = \hat{\alpha}_1 \hat{\beta}_2$. 
Figure~\ref{fig:example2} (left) shows meshes for 
\begin{equation} \label{example1-values-c}
\xi=-2,\quad x_1=\frac{9}{10}, \quad y_1 = \frac{14}{25}, \quad x_2=-\frac{1}{2},\quad y_2 =\frac{41}{100}, 
\end{equation}
together with lines given by \eqref{example1-lines-c}. The dashed quadrilateral mesh, obtained by
\begin{equation} \label{example1-values-c1}
x_1=\frac{9}{10}, \quad y_1 = \frac{14}{25}, \quad x_2=-\frac{19}{45},\quad y_2 =\frac{397}{900}, 
\end{equation}
 corresponds to the case where \eqref{example1-lines-c1} holds true.  
 For both cases, \eqref{example1-values-c} and \eqref{example1-values-c1}, we have that $\q(v)=v+2$ and $\pdw = d$,
 so the number of degrees of freedom corresponding to the interface is raised by one, i.e., to $2d$.
 For \eqref{example1-values-c}, we compute  
 \[
 \quad \hat{\alpha}_1(v) =\frac{1}{50} (31-6 v),
 \quad \hat{\alpha}_2(v) =\frac{1}{60} (-61+10 v),
 \quad \hat{\beta}_1(v) =\frac{1}{50} (51-76 v),
 \quad \hat{\beta}_2(v) =\frac{1}{30} (-53+75 v),
 \]
 and $\hat{\td}$ and $\hat{\w}$ are expressed by \eqref{eq-divisionWithbetaSol1} with two free parameters $\mu_1$ and $\mu_2$.  
 For \eqref{example1-values-c1},   
 \[
 \quad \hat{\alpha}_1(v) =\frac{1}{50} (31-6 v),
 \quad \hat{\alpha}_2(v) =-\frac{1}{36} (31- 6 v),
 \quad \hat{\beta}_1(v) = \frac{1}{50} (51-76 v),
 \quad \hat{\beta}_2(v) =-\frac{1}{36} (51- 76 v),
 \]
 and $\hat{\td}$, $\hat{\w}$ are expressed by \eqref{cond-t-w-2}. 
 
As a final special case we compute that for
\begin{equation} \label{example1-d1}
x_2= \frac{24 x_1 y_1-48 x_1^2+54 x_1-44 y_1-17}{60 \left(2 x_1-y_1\right)}, \quad
y_2=\frac{-48 x_1 y_1+84 x_1+24 y_1^2-8 y_1-17}{60 \left(2 x_1-y_1\right)},
\end{equation}
$\wt{\alpha}_1$ and $\wt{\alpha}_2$ have a common quadratic factor. Choosing $x_1=\frac{1}{2}$, $y_1=-\frac{1}{5}$ 
(see Figure~\ref{fig:example2} (right)), we compute that
\begin{linenomath}\begin{align*}
& \q(v)=\wt{\alpha}_1(v), \quad {\alpha}_1(v) =\hat{\alpha}_1(v)=1,  \quad 
{\alpha}_2(v) = \hat{\alpha}_2(v) = \frac{1}{90}(13 v-85), \\
& \hat{\beta}_1(v) = \frac{1}{10}(4 v+1),
\quad \hat{\beta}_2(v) = -\frac{1}{36}(11 v+6), 
\end{align*}\end{linenomath}
and $\hat{\td}$, $\hat{\w}$ are given by  \eqref{eq-divisionWithbetaSol1}. Since $\pdw = d+1$, the trace $\tr$ and normal derivative $\w$ are expressed by $2d+1$ free parameters. If in addition to \eqref{example1-d1}
$y_1 = \frac{1}{4}(1-2x_1)$ or $y_1 = -\frac{17}{12} + 2 x_1$, then 
$\hat{\alpha}_2\hat{\beta}_1 = \hat{\alpha}_1 \hat{\beta}_2$. 
The red lines in Figure~\ref{fig:example2} (right) demonstrate these positions of a point $(x_1,y_1)$ - clearly, the position of a point $(x_2,y_2)$ changes according to \eqref{example1-d1}. 
For $x_1=\frac{7}{10}$, $y_1 = -\frac{1}{10}$ (dashed mesh on Figure~\ref{fig:example2}, right) it holds that $\q=\beta$.  In this case
\[
 {\alpha}_1(v) =\hat{\alpha}_1(v)=\frac{6}{5},  \quad 
{\alpha}_2(v) = \hat{\alpha}_2(v) = -\frac{4}{75}(17+v), \quad \hat{\beta}_1(v) = \hat{\beta}_2(v) = 0, 
\]
and so $\hat{\td}(v) = \mu_1 + \mu_2 v$, $\hat{\w}(v)=0$. Since $\pdw = d+1$, the trace and normal derivative are expressed with  $2d+1$ free parameters, where $d$ parameters correspond only to the trace and $d+1$ to the normal derivative. 
\begin{figure}[htb]
\centering\footnotesize
 \begin{minipage}{0.45\textwidth}
\includegraphics[width=1\textwidth]{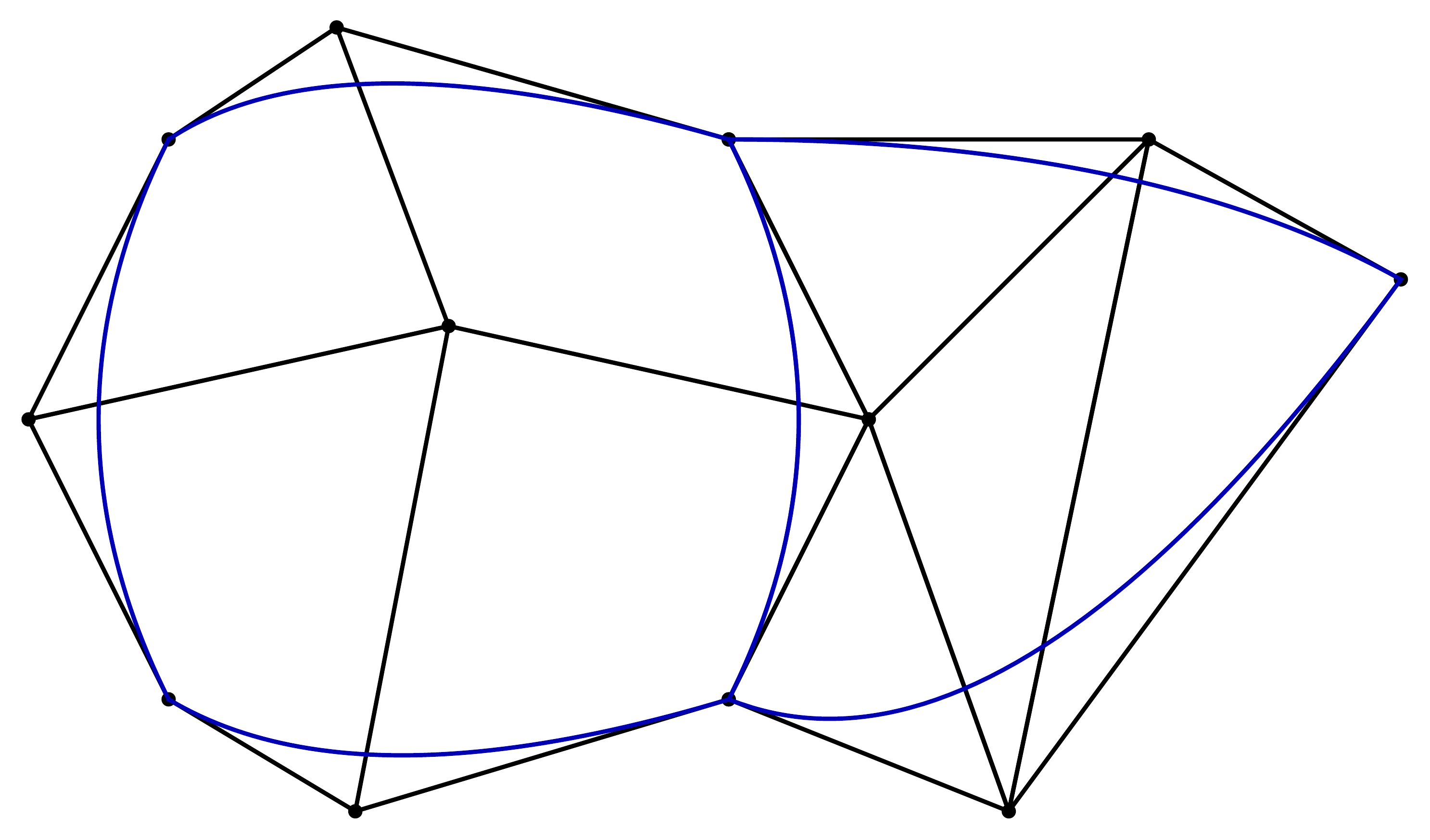}
\end{minipage}
 \begin{minipage}{0.45\textwidth}
\includegraphics[width=1\textwidth]{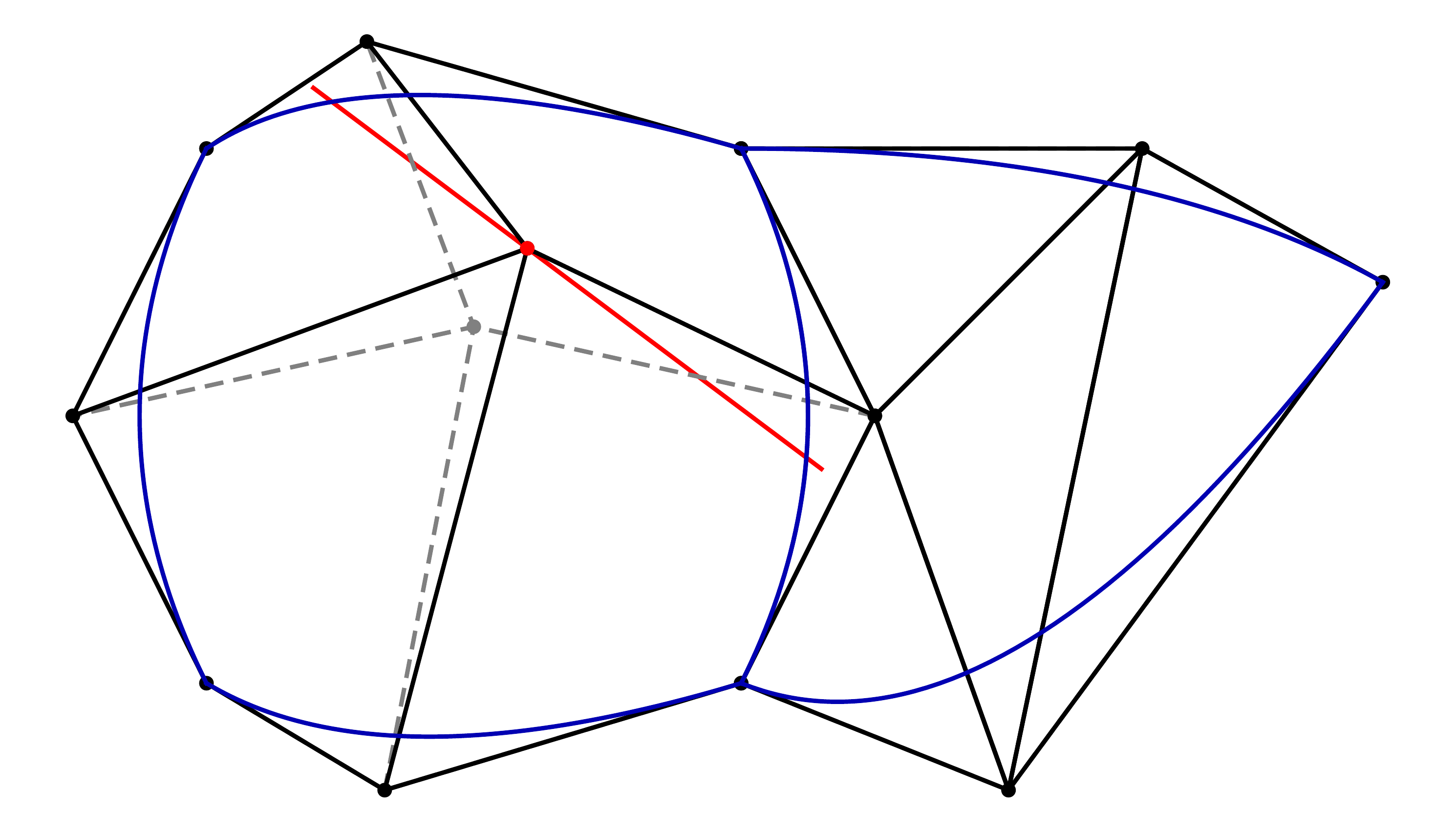}
\end{minipage}
\caption{Examples of mixed (bi-)quadratic triangular and quadrilateral mesh elements from Example~\ref{example-1} with different special cases.}
\label{fig:example1}
\end{figure}
\begin{figure}[htb]
\centering\footnotesize
 \begin{minipage}{0.45\textwidth}
\includegraphics[width=0.9\textwidth]{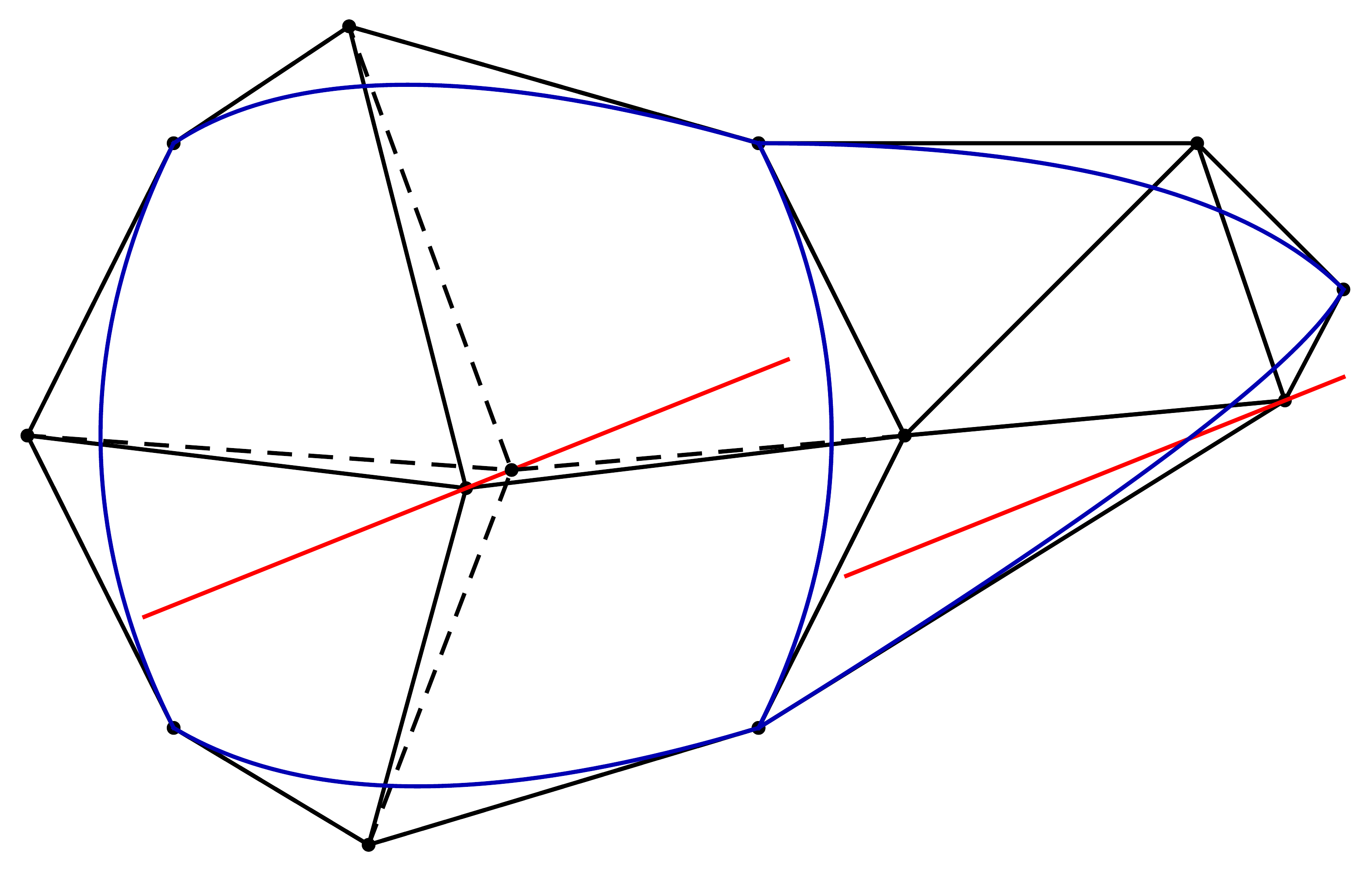}
\end{minipage}
 \begin{minipage}{0.45\textwidth}
\includegraphics[width=0.9\textwidth]{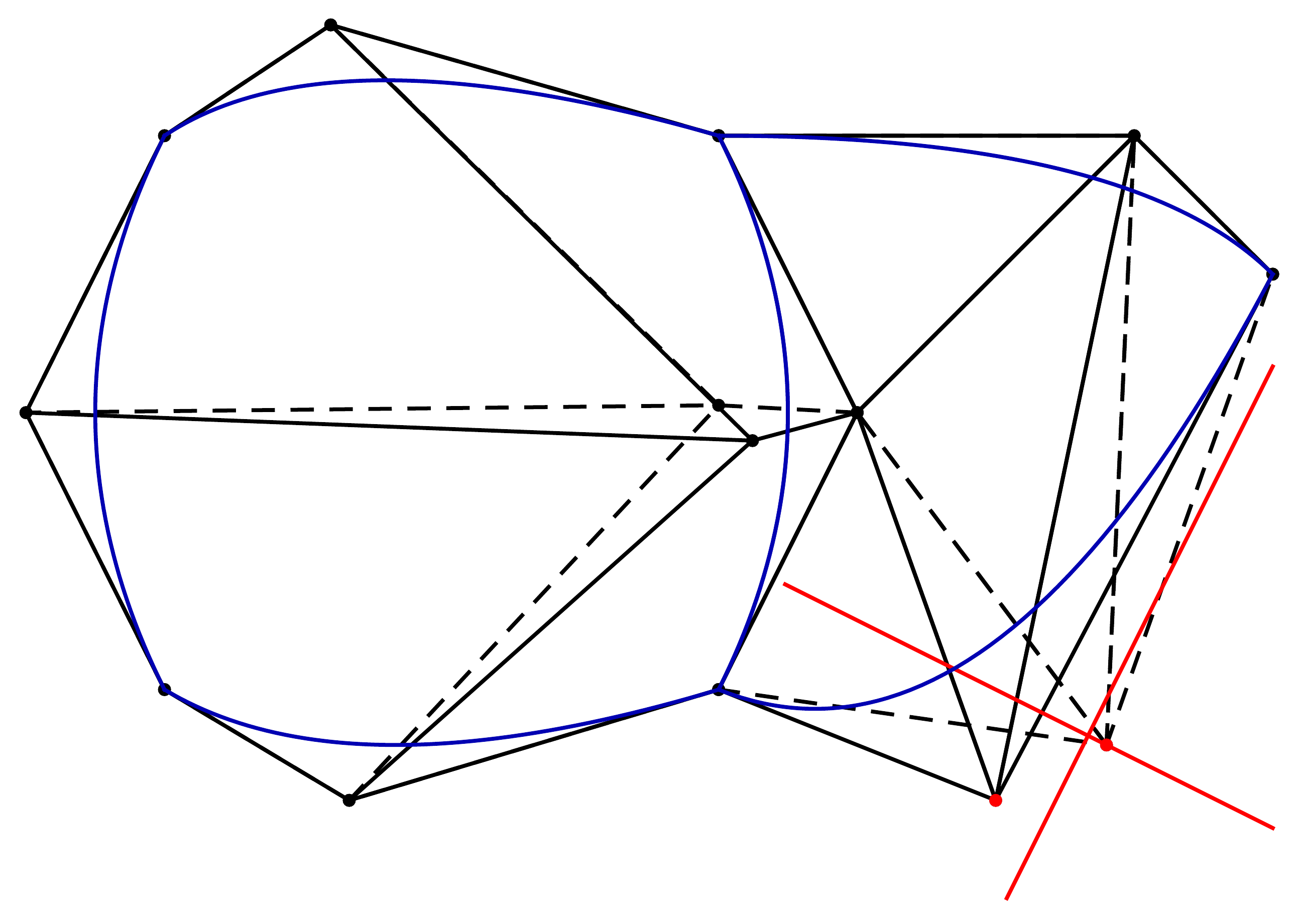}
\end{minipage}
\caption{Examples of mixed (bi-)quadratic triangular and quadrilateral mesh elements from Example~\ref{example-1} with different special cases.}
\label{fig:example2}
\end{figure}
\end{example}
\begin{example} \label{example-2}
As the next example, let us consider a linear interface, parameterized non-uniformly: 
\begin{equation} \label{example-2-C1And2}
\cF^{(1)} = \begin{bmatrix}
(0,0) & \left(0, \frac{1}{3}\right) & \left(0,1\right)\\[1mm]
\left(x_1,y_1\right) & \left(\frac{3}{4}, 1\right) &\\[1mm]
\left(\frac{6}{5}, \frac{3}{4}\right)
\end{bmatrix}, \quad
\cF^{(2)} = \begin{bmatrix}
\left(0,0\right) & \left(0,\frac{1}{3}\right) & (0,1) \\[1mm]
 \left(-\frac{2}{3},-\frac{1}{5}\right) & \left(x_2,y_2\right) & \left(-\frac{7}{10},\frac{6}{5}\right) \\[1mm]
 (-1,0) & \left(-\frac{5}{4},\frac{1}{2}\right) & (-1,1)
\end{bmatrix}.
\end{equation}
Here $\beta(v) = \frac{4}{9}(1+v)^2$ and $\rho(v)= \frac{2}{3}(1+v)$. 
Choosing $\left(x_1,y_1\right) = \left(\frac{1}{2}, -\frac{1}{5}\right)$ and $\left(x_2,y_2\right) = \left(-\frac{1}{2},\frac{2}{3}\right)$ 
we compute that $\q(v) = \rho(v)$,
\begin{linenomath}\begin{align*}
& {\alpha}_1(v) =\frac{1}{2}(2+v), \quad {\alpha}_2(v) =\frac{1}{15} \left(-20+10v-11 v^2\right), \\
&  \beta_1(v) =\frac{2}{15}(-3+13v)\rho(v),\quad \beta_2(v) = -\frac{2}{15} (3-16v+10v^2)\rho(v), 
\end{align*}\end{linenomath}
and
\[
\hat{\alpha}_1(v) = \frac{3}{4} \rho(v) + \frac{1}{2},\quad \hat{\alpha}_2(v) = \frac{16}{5} \rho(v)- \frac{41}{15}, \quad \hat{\beta}_1(v) = -\frac{32}{15} \rho(v), \quad \hat{\beta}_2(v) = -\frac{58}{15}\rho(v).
\]
The remainders $\hat{\td}$ and $\hat{\w}$ are given by case (1) in Proposition~\ref{proposition-nonuniform-edge}: $\hat{\td}(v) = \mu_1 \rho(v)$, $\hat{\w}(v) = 0$, so there are $2d-1$ degrees of freedom corresponding to the interface ($d$ for $\tr$ and $d-1$ for $\w$). 
The mesh is shown in Figure~\ref{fig:example3}, left, together with the (red) line $y_2 = -\frac{1}{900}(3067 + 3840 x_2)$ that shows the positions for a point 
$(x_2,y_2)$ for which the solution is given by case (2) in Proposition~\ref{proposition-nonuniform-edge}. The dashed mesh is a mesh obtained for $x_2= -\frac{85}{100}$. In this case the number of degrees of freedom for the interface is $2 d$, because $\pdw = d-2$ and 
\[
\hat{\td}(v) =\frac{2}{3}(1+v)\mu_1 + \frac{1}{3}\mu_2, \quad \hat{\w}(v) = \frac{64}{45}(1+v)\mu_2.
\]
Furthermore, it is straightforward to compute that $\beta$ divides $\wt{\alpha}_1$ and $\wt{\alpha}_2$ (so  $\pdw = d-1$) iff 
$x_1 = \frac{3}{8}$, $x_2 = -\frac{101}{120}$. These two lines are shown in Figure~\ref{fig:example3}, right, together with a mesh for 
\begin{equation}\label{example2-choice-3}
x_1 = \frac{3}{8}, \quad x_2 = -\frac{101}{120}, \quad y_1 = -\frac{1}{5}, \quad y_2= \frac{2}{3}.
\end{equation}
The dashed control mesh is obtained for
\begin{equation}\label{example2-choice-4}
x_1 = \frac{3}{8}, \quad x_2 = -\frac{101}{120}, \quad y_1 = \frac{1}{3}, \quad y_2= \frac{11}{60}.
\end{equation}
For \eqref{example2-choice-3}, $\q(v)=\beta(v)$, $\pdw = d-1$ and 
$\hat{\td}(v) =\mu_1 \rho(v) $, $\hat{\w}(v)=0$, so there are $d$ degrees of freedom for the trace $\tr$ and $d$ degrees of freedom for the normal derivative $\w$. 
For \eqref{example2-choice-4}, the polynomial $\beta$ divides also $\beta_1$ and $\beta_2$, so $\hat{\beta}_1(v)=\hat{\beta}_2(v) = 0$ and the remainders are given as $\hat{\td}(v) = \mu_1 \rho(v) + \frac{9}{8}\mu_2$, $\hat{\w}(v) = 0$. The trace $\tr$ and normal derivative $\w$ are expressed with  $2d+1$ free parameters, where $d+1$ parameters correspond only to $\tr$ and $d$ to $\w$.  

\begin{figure}[htb]
\centering\footnotesize
 \begin{minipage}{0.45\textwidth}
\includegraphics[width=0.9\textwidth]{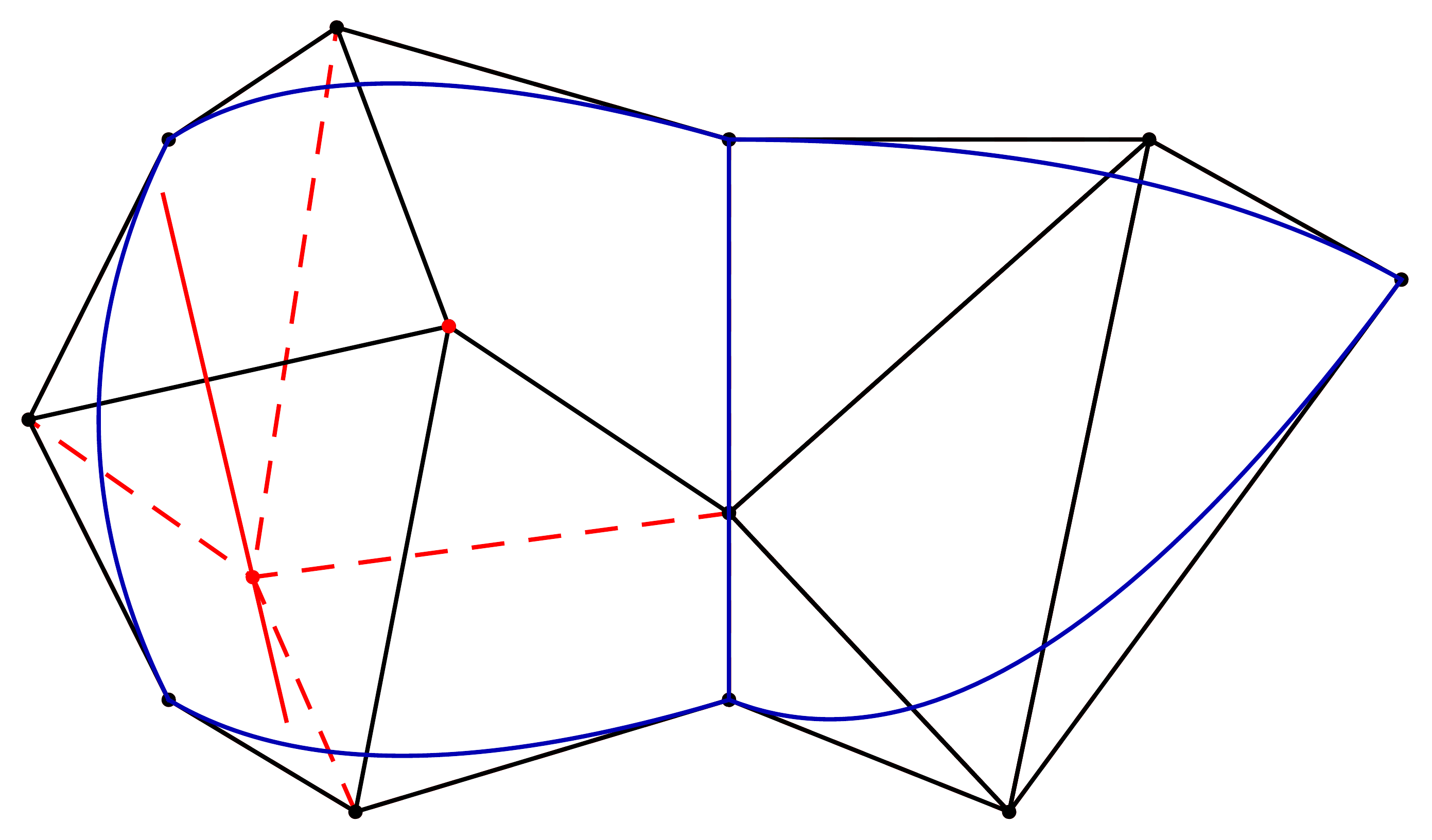}
\end{minipage}
 \begin{minipage}{0.45\textwidth}
\includegraphics[width=0.9\textwidth]{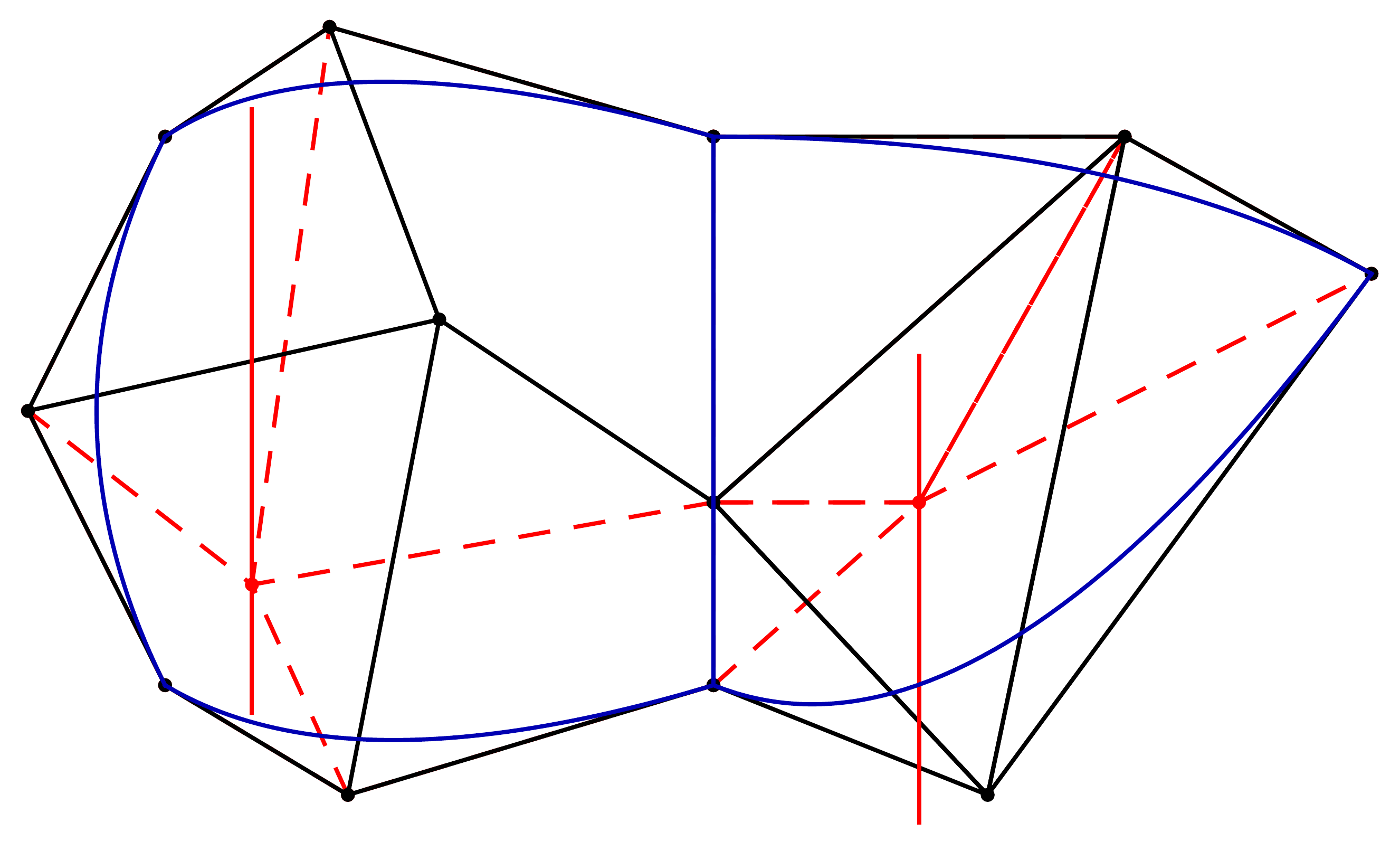}
\end{minipage}
\caption{Examples of mixed (bi-)quadratic triangular and quadrilateral mesh elements with non-uniformly parameterized linear interface from Example~\ref{example-2} with different special cases.}
\label{fig:example3}
\end{figure}
\end{example}

\begin{example} \label{example-3}
As the final example, we choose two mesh elements with uniformly parameterized linear interface, given by \eqref{example-2-C1And2}
with the control point $\cF_1 = \cF_{0,1}^{(1)} = \cF_{0,1}^{2)}$ replaced by 
$ \left(0, \frac{1}{2}\right)$. 
Then $\beta(v) = 1$, and the trace and the normal derivative are given by Proposition~\ref{proposition-linear-edge}. 
It is straightforward to compute that case (1) occurs iff 
$\frac{86}{15} - 4 x_1 + 4 x_2 - \frac{82}{15}y_1 - 8 x_2 y_1 - 6 y_2 + 8 x_1 y_2 \not= 0$. Else we are in case (2) which yields one additional degree of freedom. 
\end{example}

\subsection{Construction of isogeometric basis functions over two mesh elements} 

Let us demonstrate in the following the construction of a basis for $\mathcal{V}_\pd^1(\Omega)$, as presented in Section~\ref{sec:basis}, on a few examples. 
\begin{example} \label{example-basisFunctions}
Consider first the mesh from Figure~\ref{fig:example1}, left (first case of Example~\ref{example-1}). In this case we have $2\pd-1$ free parameters, i.e., $n_\tr = \pd-1$ parameters corresponding to the trace, $n_\w = \pd-2$ parameters corresponding to the normal derivative and two parameters $\mu_1$, $\mu_2$ for $\hat{\td}$, $\hat{\w}$. Let us first consider the case $d=6$. Then the interpolation functionals are chosen as \eqref{eq-IntFun-PI-even-d} and the interpolation problem is defined by 
\begin{linenomath}\begin{align*}
& \lambda_0^{(0)} \tr = a_1, \quad \lambda_0^{(1)} \tr = 6 a_2, \quad \lambda_1^{(0)} \tr = a_3, \quad \lambda_1^{(1)} \tr = 6 a_4, \quad  \lambda_0^{(0)} \w = 5 a_5, \quad  \lambda_1^{(0)} \w = 5 a_6,  \\
& \lambda_{1/2}^{(0)} \tr = a_7, \quad \lambda_{1/3}^{(0)} \w = 5 a_8, \quad \lambda_{2/3}^{(0)} \w = 5 a_9, \quad 
\mu_1 = a_{10}, \quad \mu_2 = a_{11}  
\end{align*}\end{linenomath}
with some vector $\bfm{a} = (a_i)_{i=1}^{2d-1} \in \RR^{2d-1}$. Taking $\bfm{a} = \bfm{e}_i$, $i=1,2,\dots, 2d-1$, where $\bfm{e}_i$ is the $i$-th basis vector in $\RR^{2d-1}$, we obtain $11$ basis functions shown in Figure~\ref{fig:example3-b} 
where for the last two functions we have additionally applied the Gram--Schmidt orthogonalization. The collocation matrix corresponding to the interpolation equations has the condition number (in the Euclidean norm) equal to $40.35$. This confirms that we can numerically compute these basis functions in a stable way. 
For $\pd=7$, the basis functions follow from \eqref{eq-IntFun-PI-odd-d}, 
\begin{linenomath}\begin{align*}
& \lambda_0^{(0)} \tr = a_1, \quad \lambda_0^{(1)} \tr = 7 a_2, \quad \lambda_0^{(2)} \tr = 7\cdot 6\, a_3, \quad \lambda_1^{(0)} \tr = a_4, \quad \lambda_1^{(1)} \tr = 7 a_5, \quad \lambda_1^{(2)} \tr = 7\cdot 6\, a_6,\\
&  \lambda_0^{(0)} \w = 6 a_7, \quad  \lambda_0^{(1)} \w = 6\cdot 5\, a_8,
 \quad  \lambda_1^{(0)} \w = 6 a_9, \quad  \lambda_1^{(1)} \w = 6\cdot5\, a_{10}, \\
& \lambda_{1/2}^{(0)} \w = a_{11}, \quad 
\mu_1 = a_{12}, \quad \mu_2 = a_{13},
\end{align*}\end{linenomath}
and are shown in Figure~\ref{fig:example4}. Again, the numerical computations are stable since the condition number of the collocation matrix equals $37.16$.
\begin{figure}[htb]
\centering\footnotesize
\includegraphics[width=.24\textwidth]{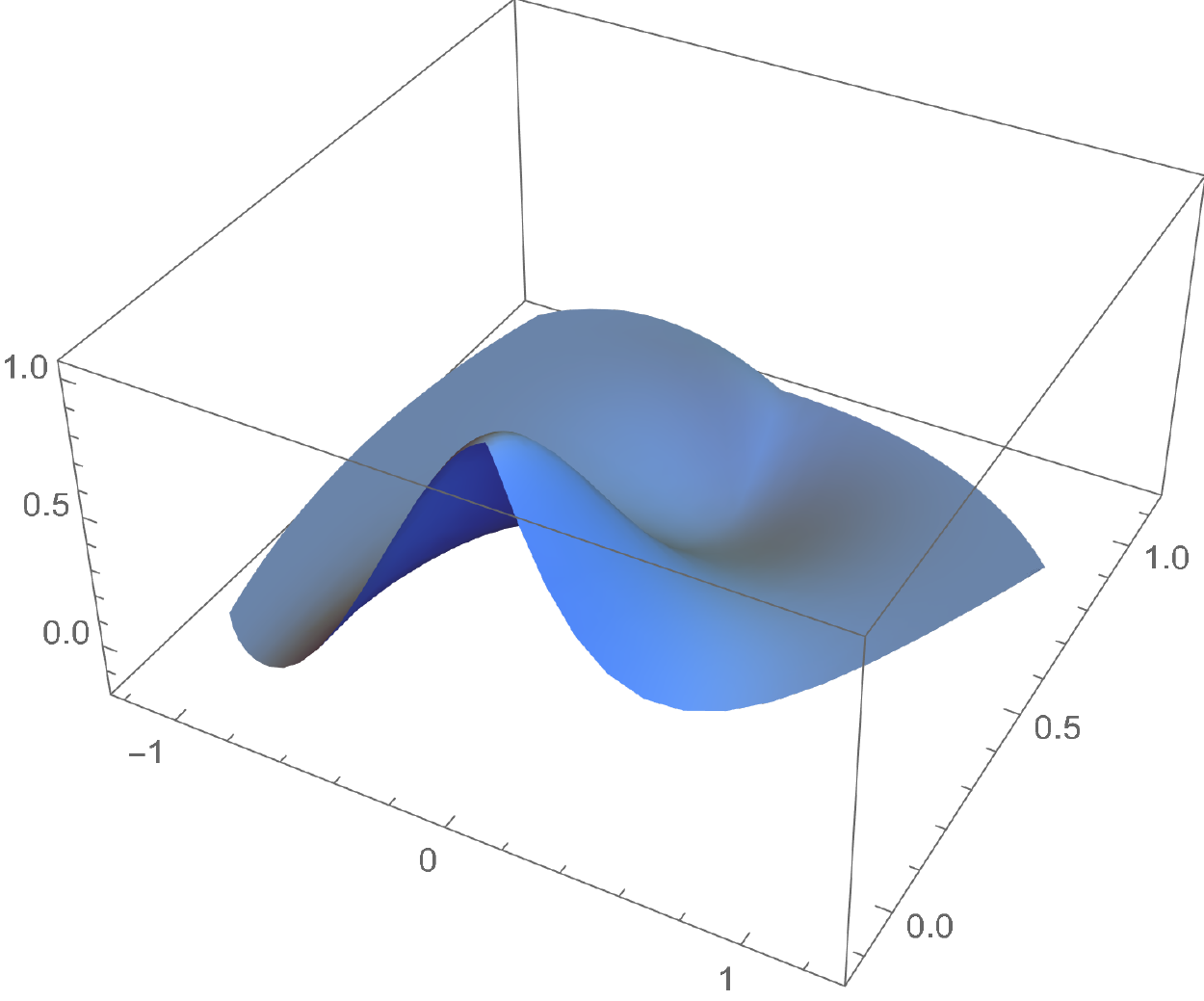}
\includegraphics[width=.24\textwidth]{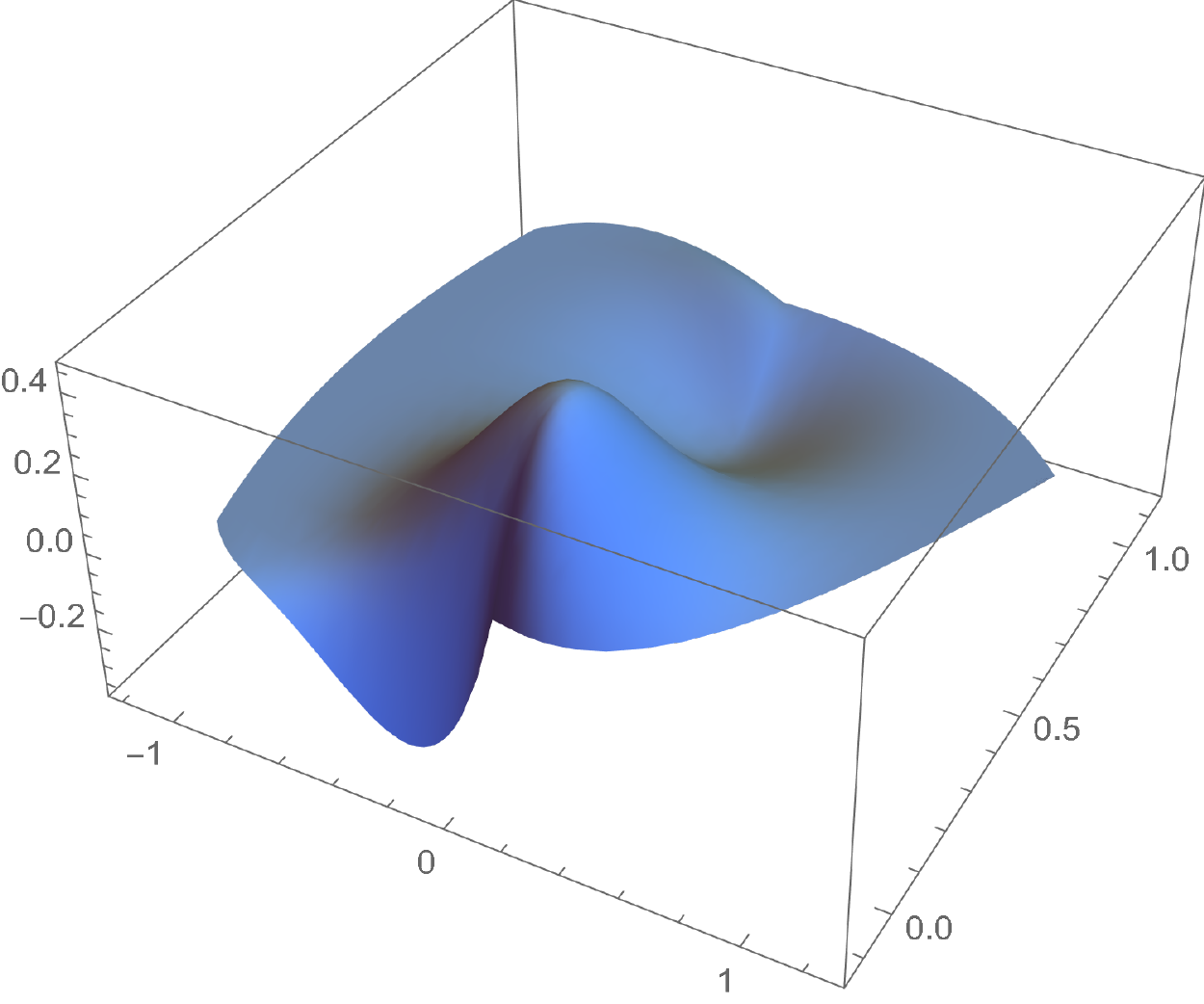}
\includegraphics[width=.24\textwidth]{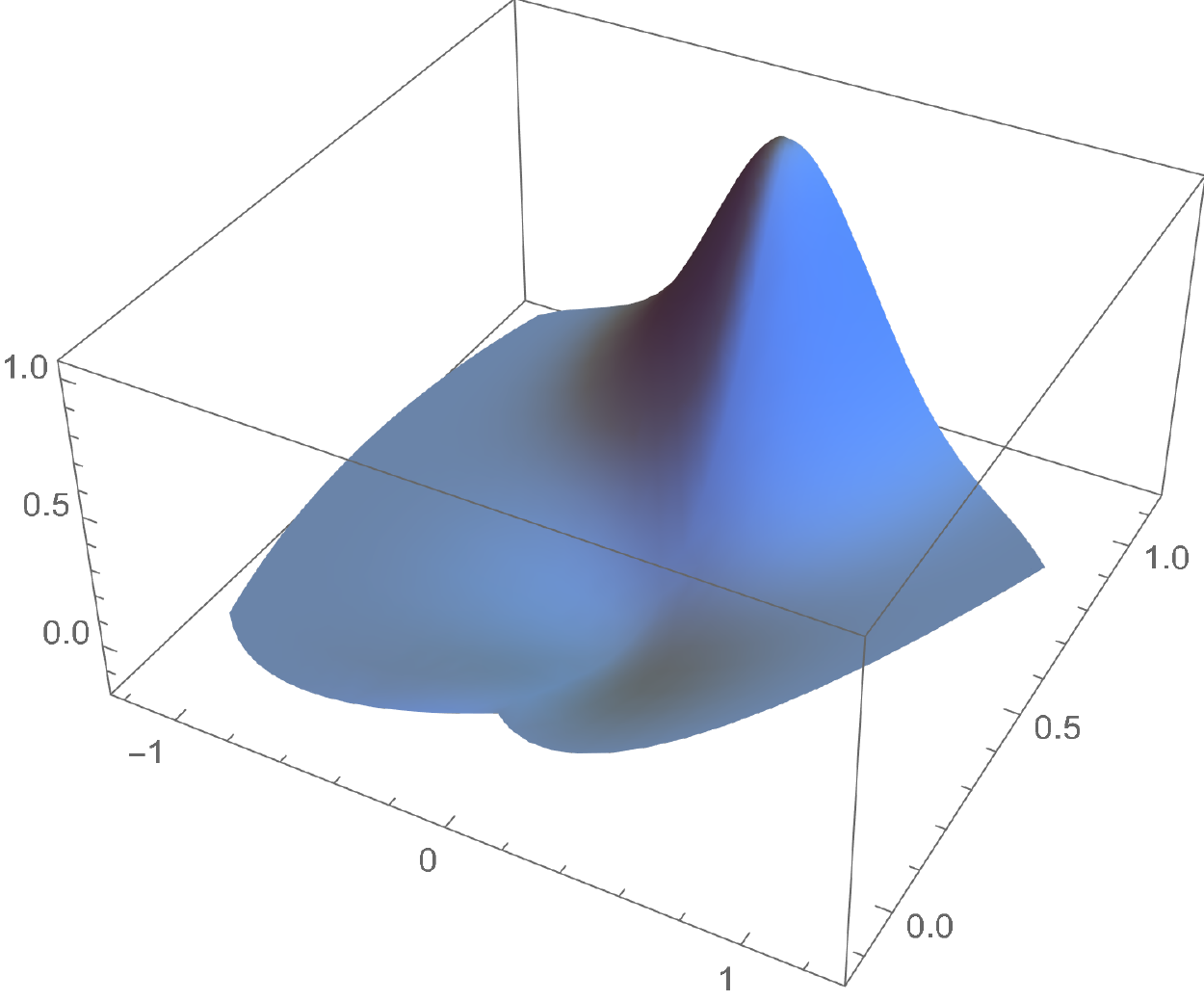}
\includegraphics[width=.24\textwidth]{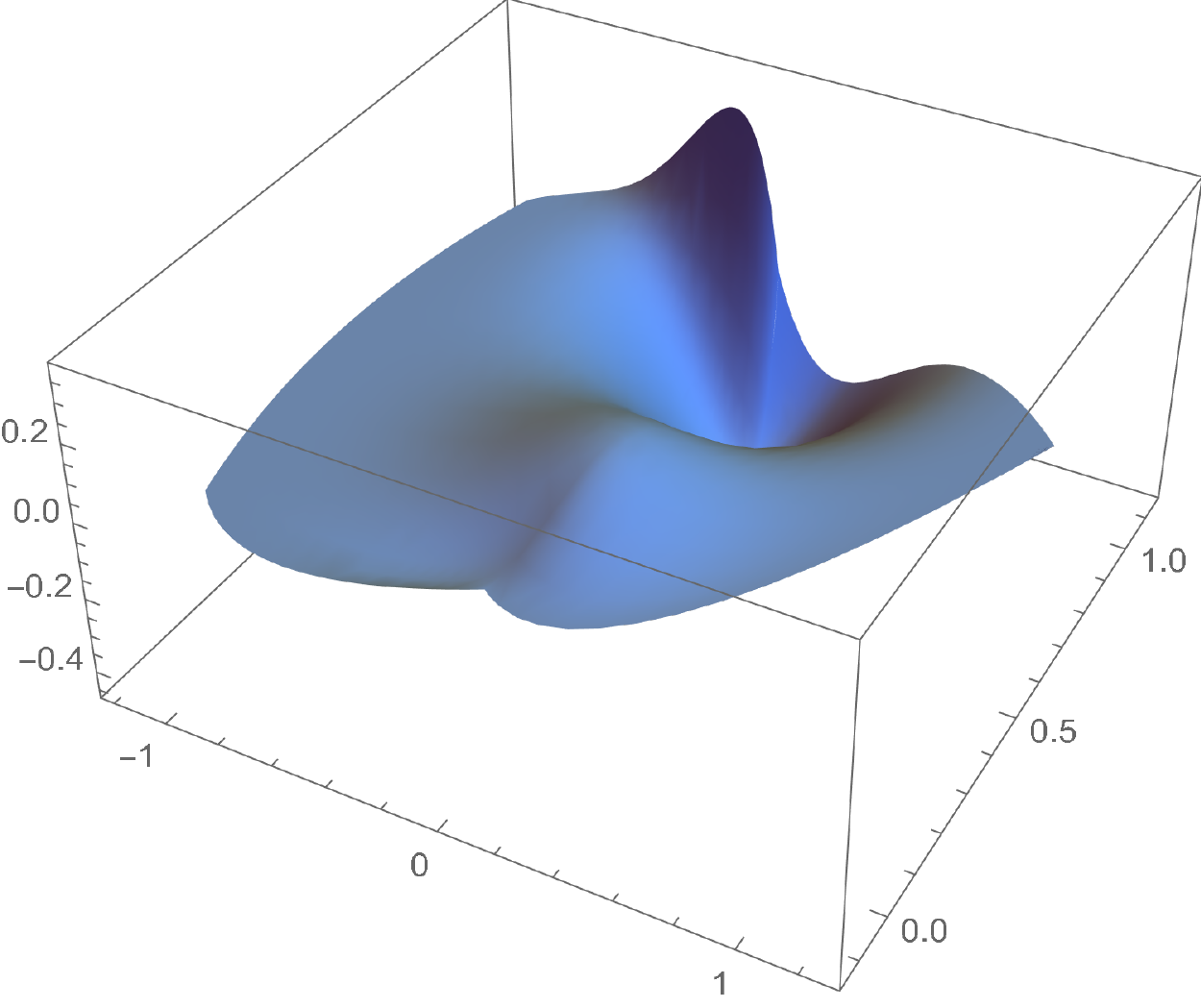}
\includegraphics[width=.24\textwidth]{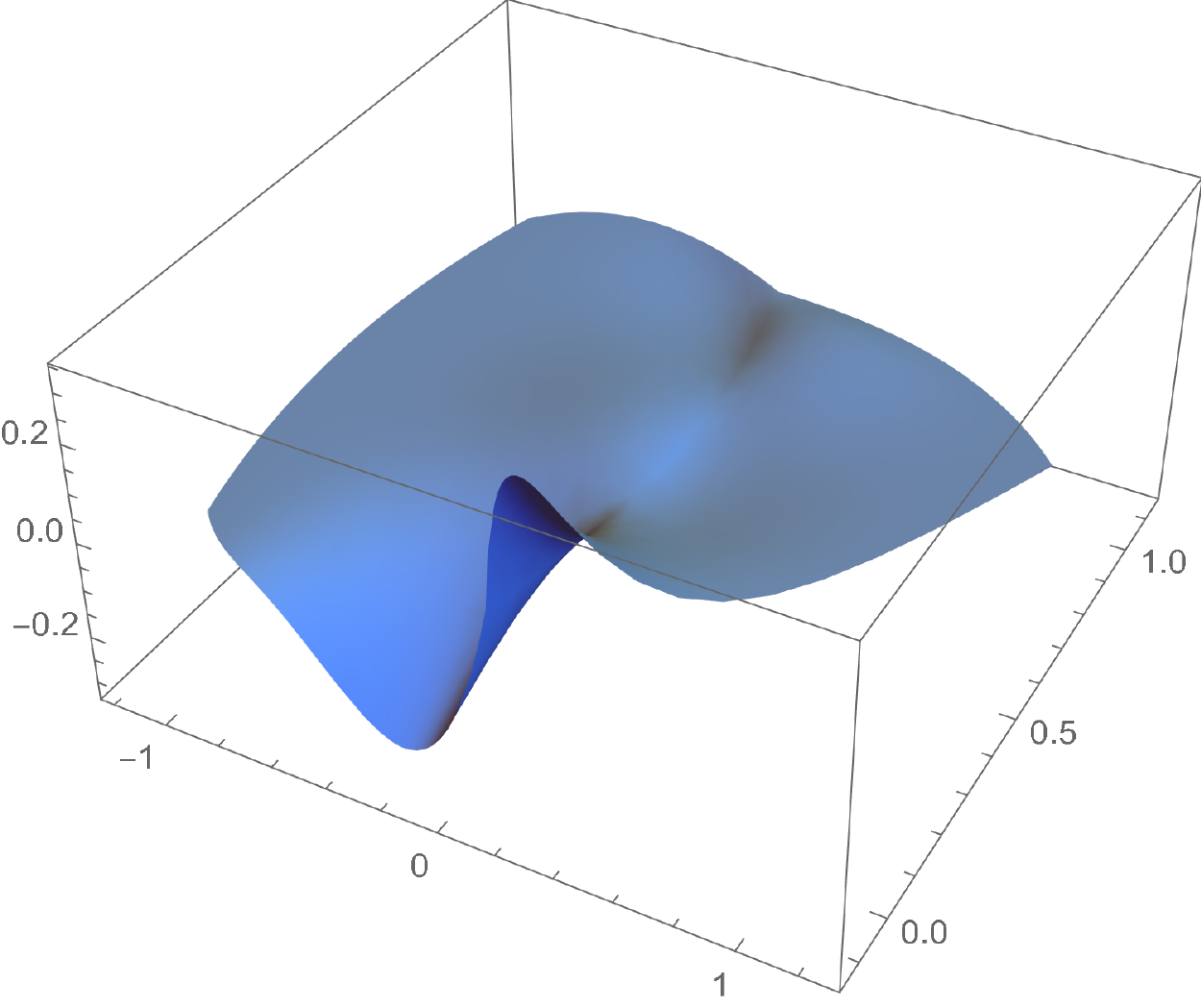}
\includegraphics[width=.24\textwidth]{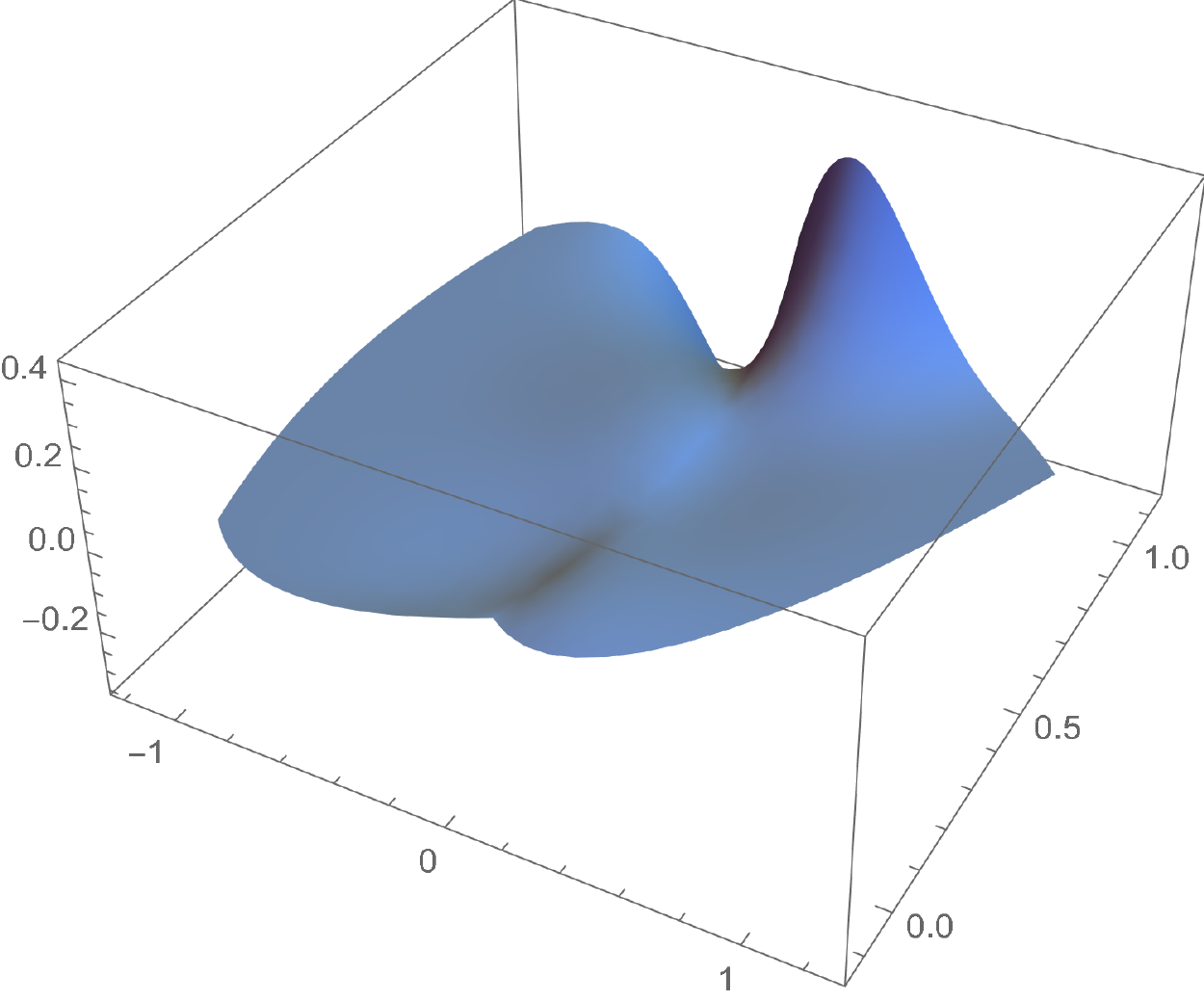}
\includegraphics[width=.24\textwidth]{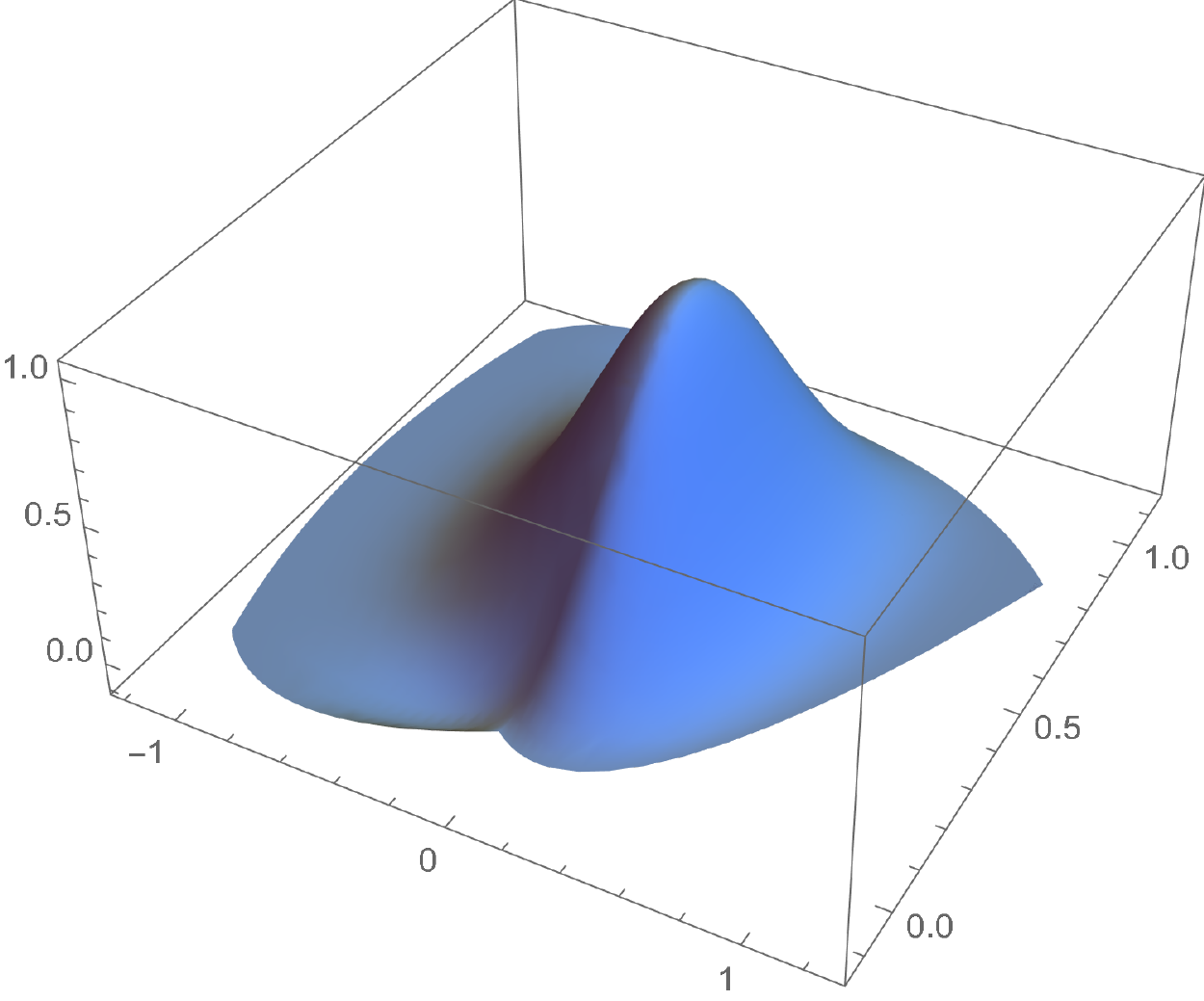}
\includegraphics[width=.24\textwidth]{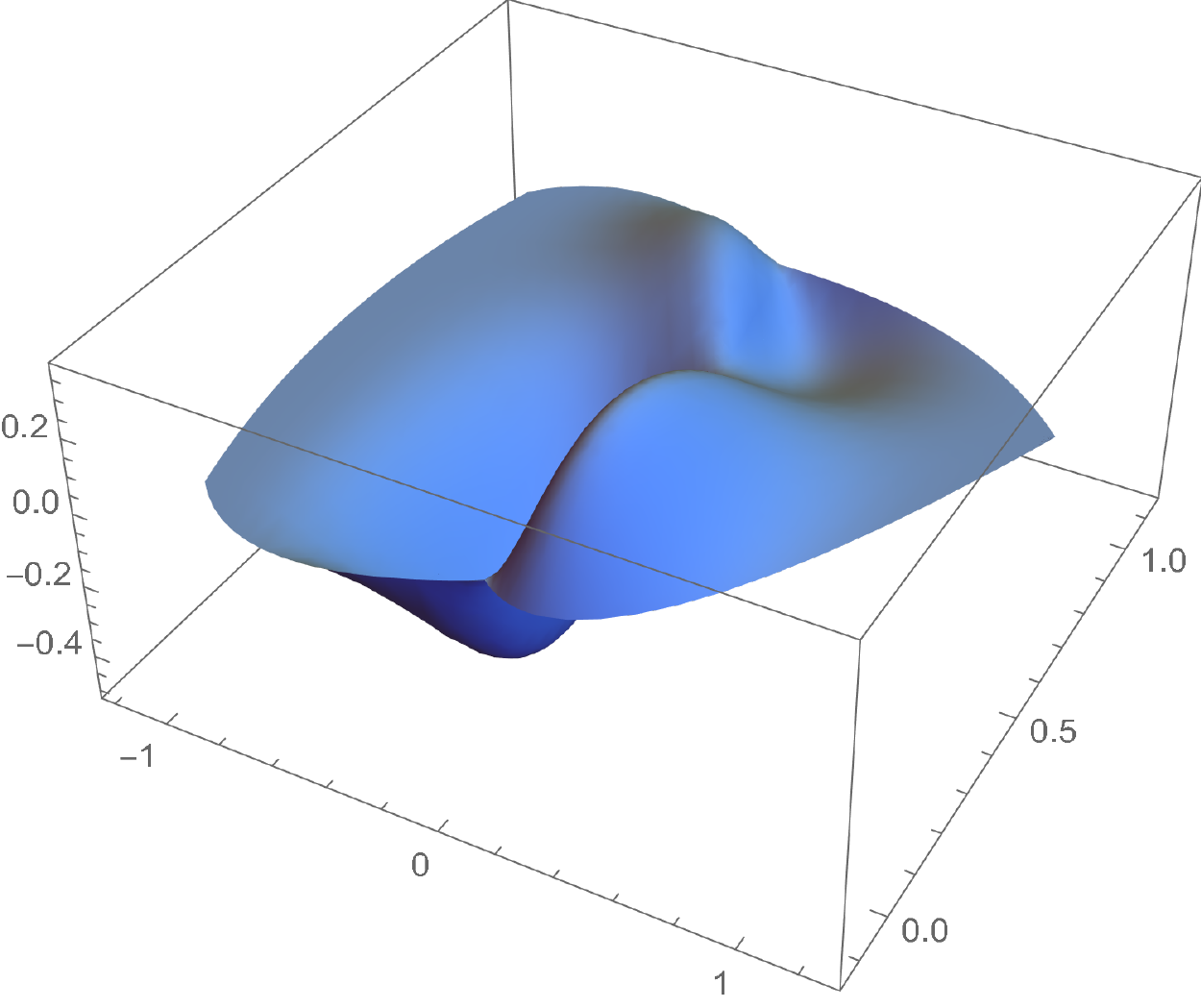}
\includegraphics[width=.24\textwidth]{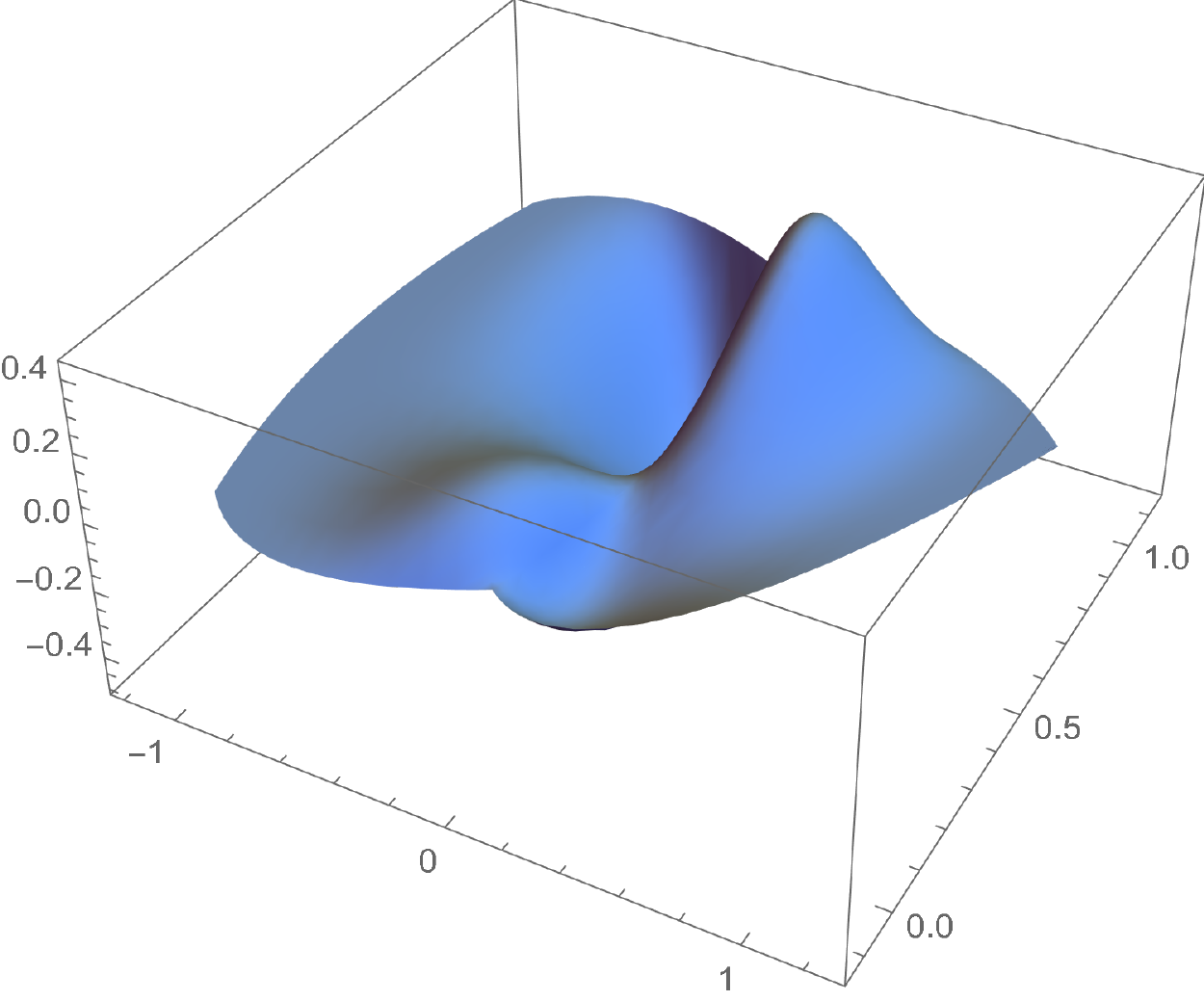}
\includegraphics[width=.24\textwidth]{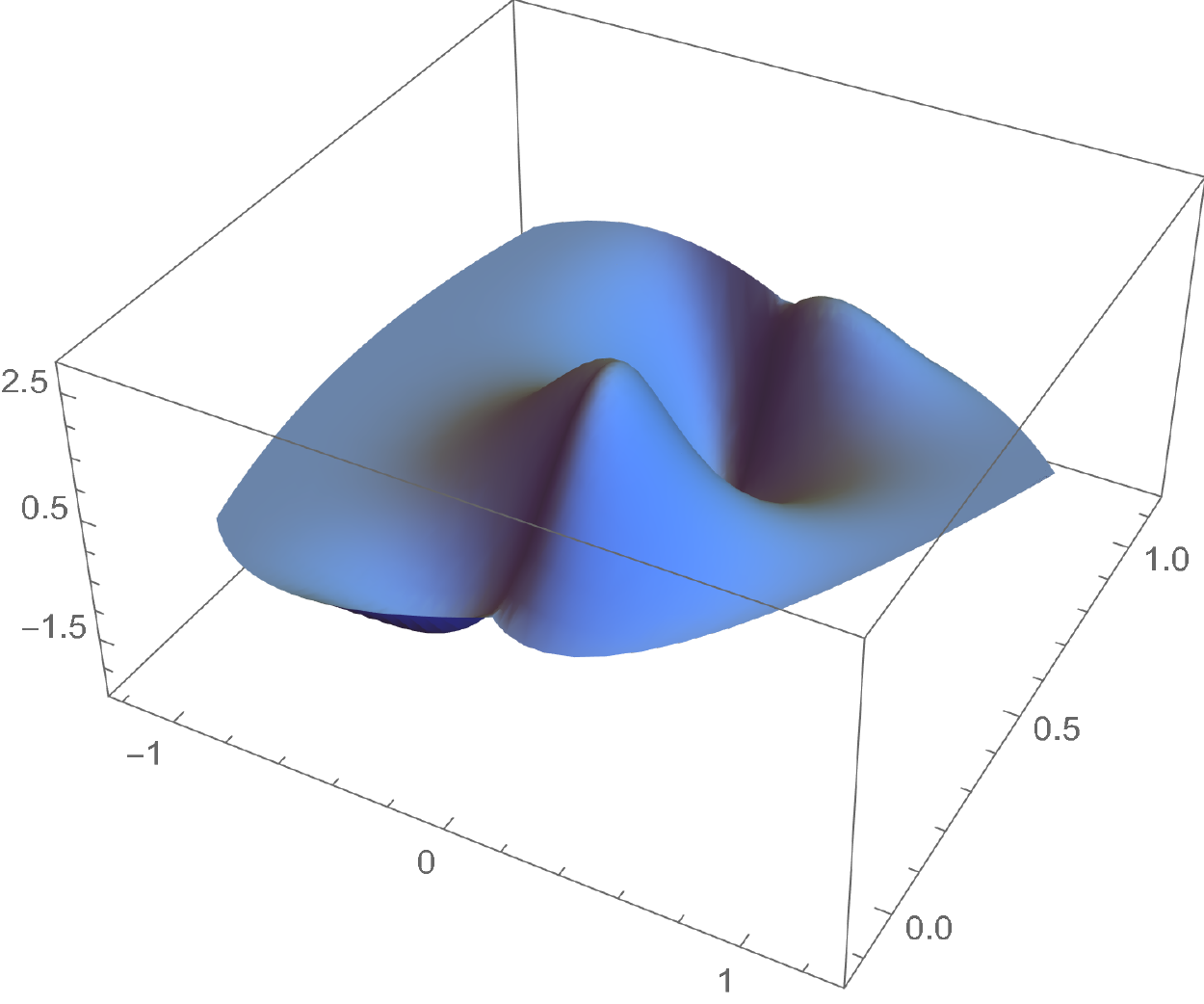}
\includegraphics[width=.24\textwidth]{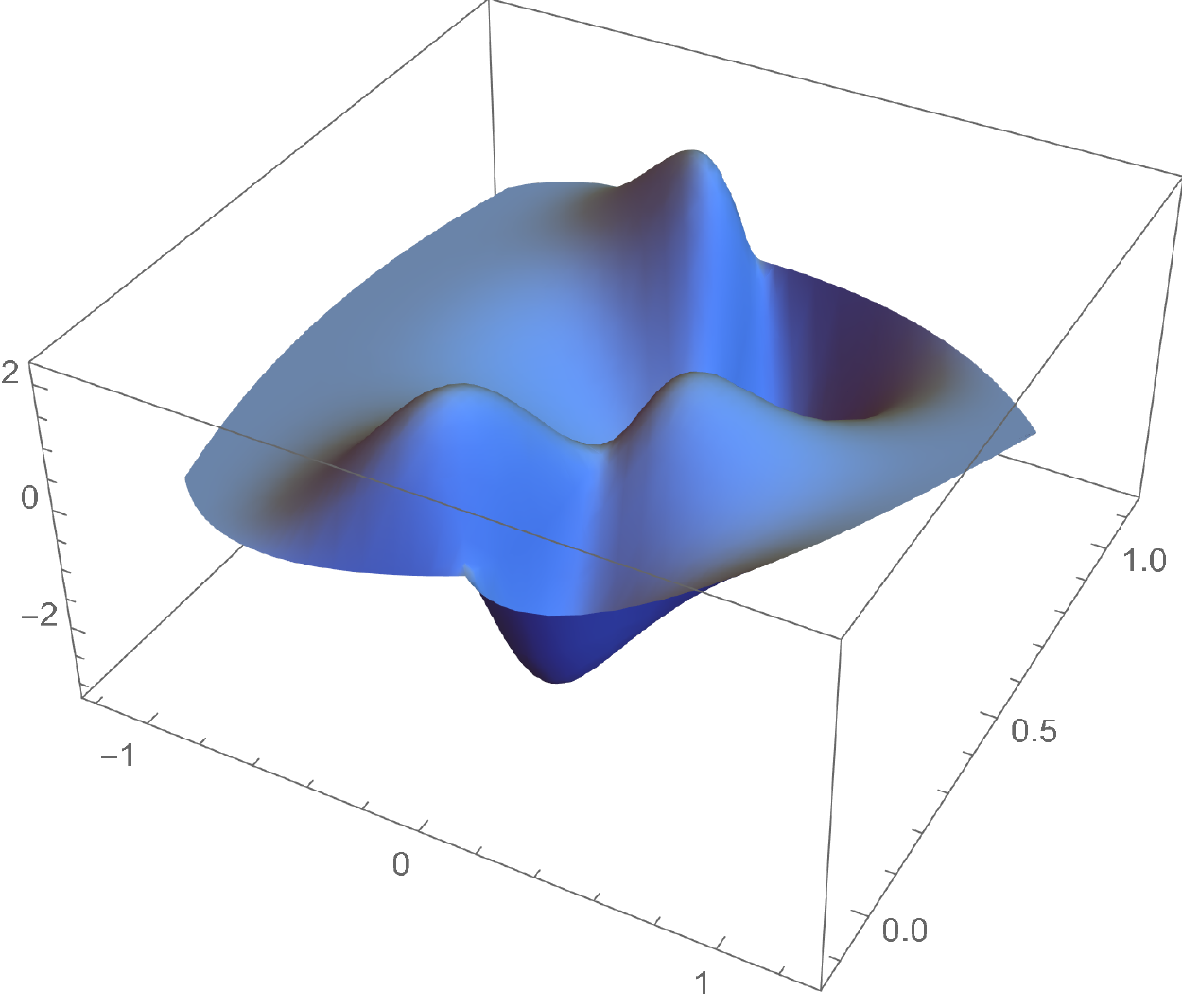}
\caption{Basis functions from Example~\ref{example-basisFunctions} for $\pd=6$.}
\label{fig:example3-b}
\end{figure}
\begin{figure}[htb]
\centering\footnotesize
\includegraphics[width=.24\textwidth]{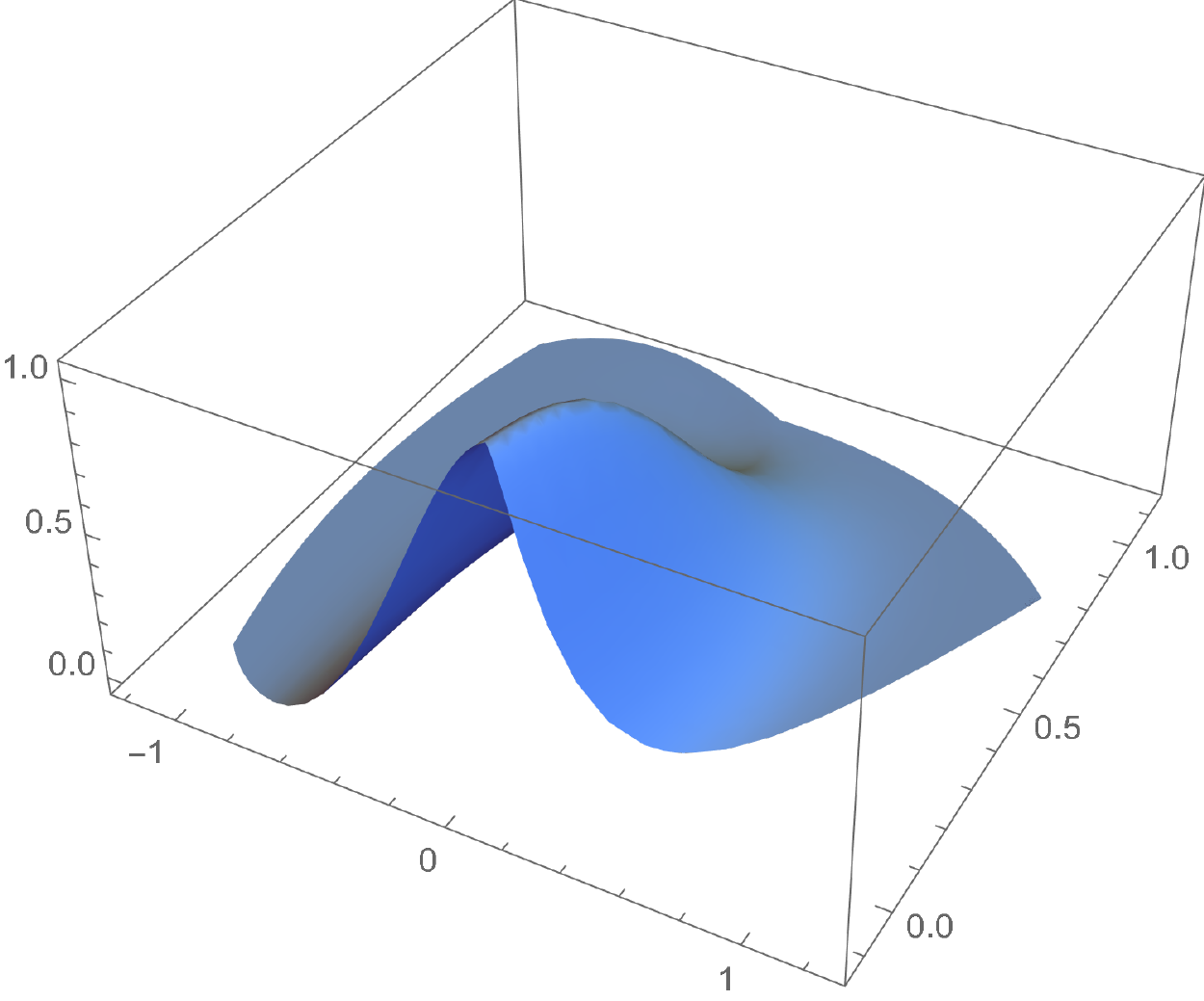}
\includegraphics[width=.24\textwidth]{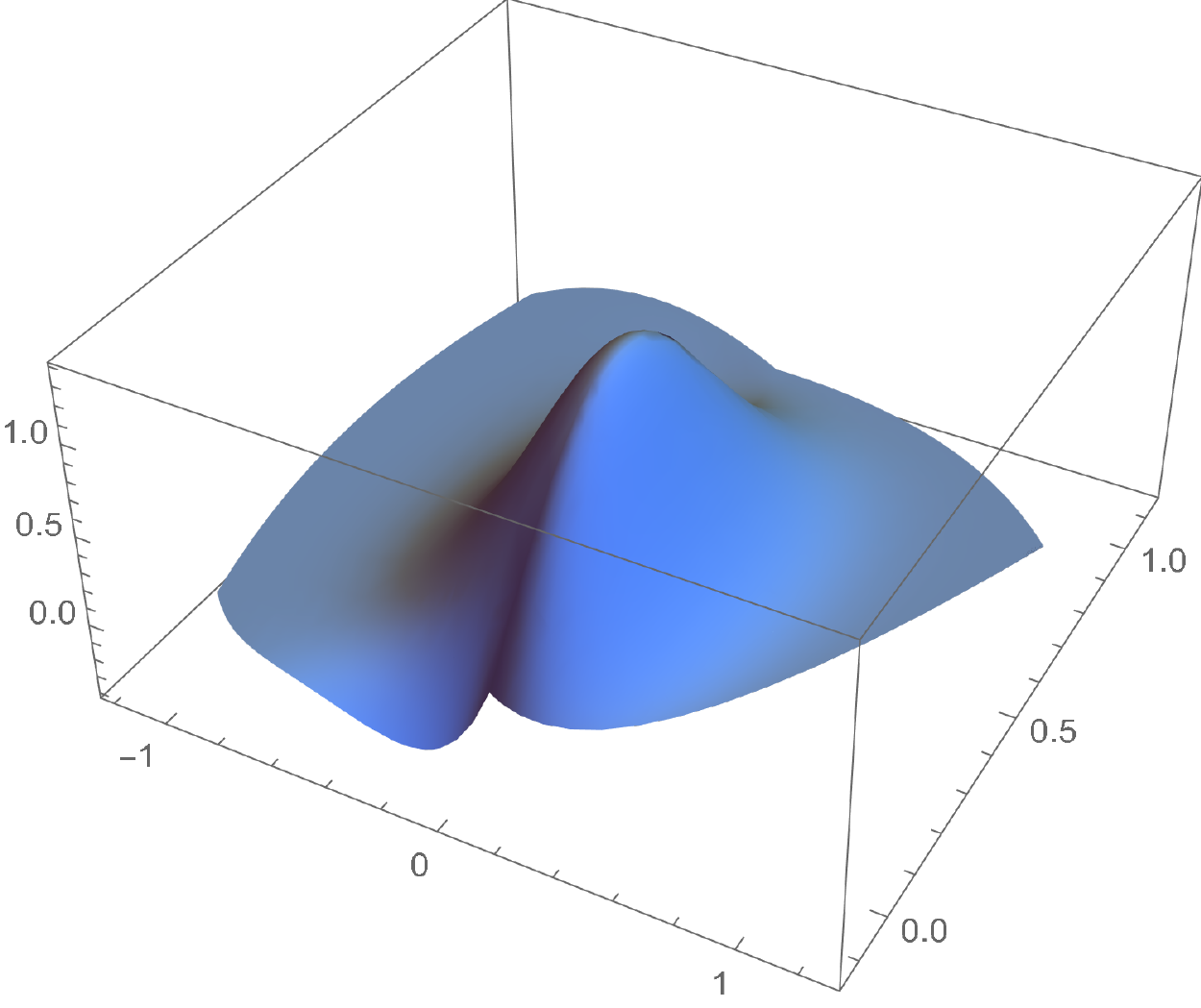}
\includegraphics[width=.24\textwidth]{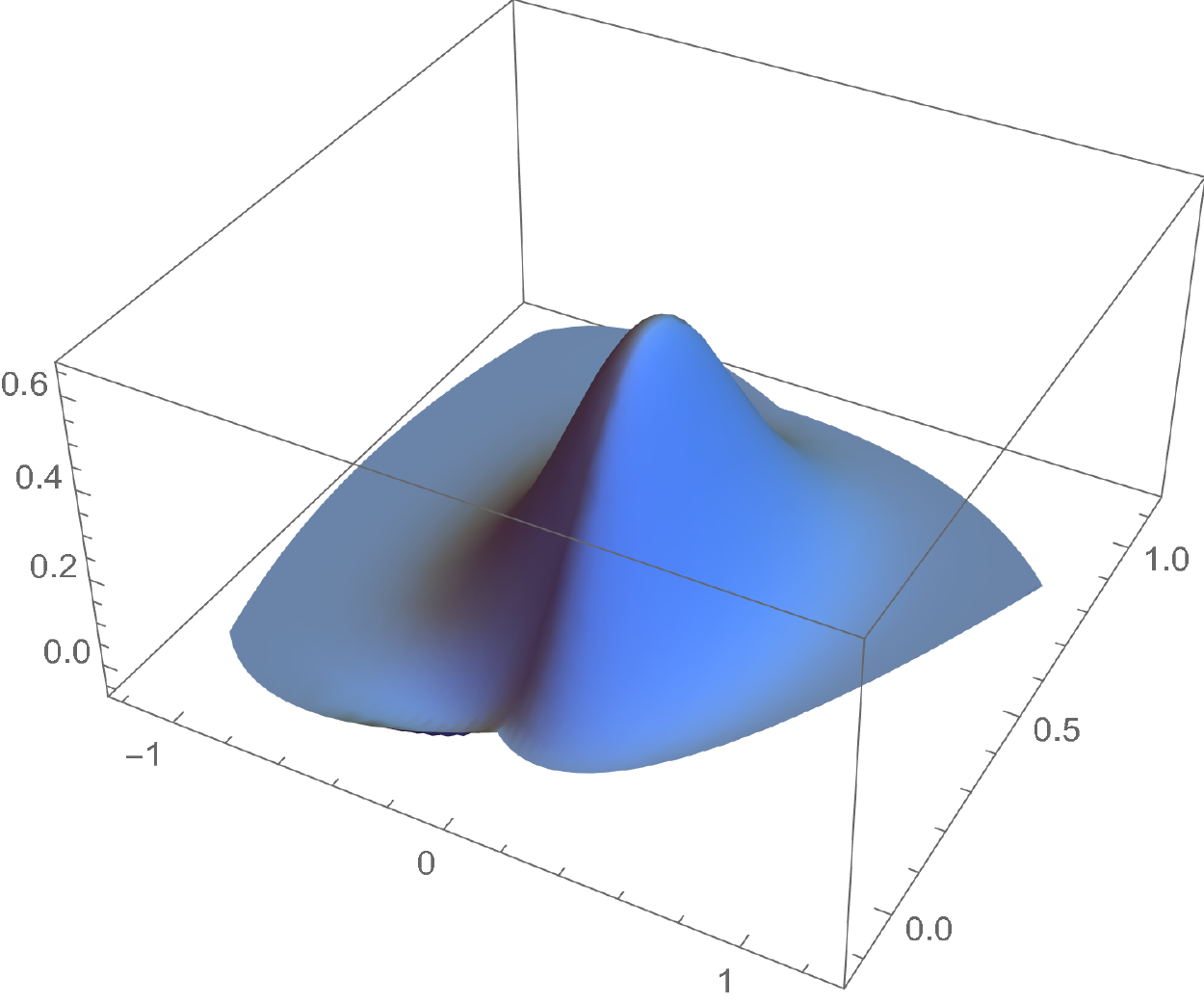}\\
\includegraphics[width=.24\textwidth]{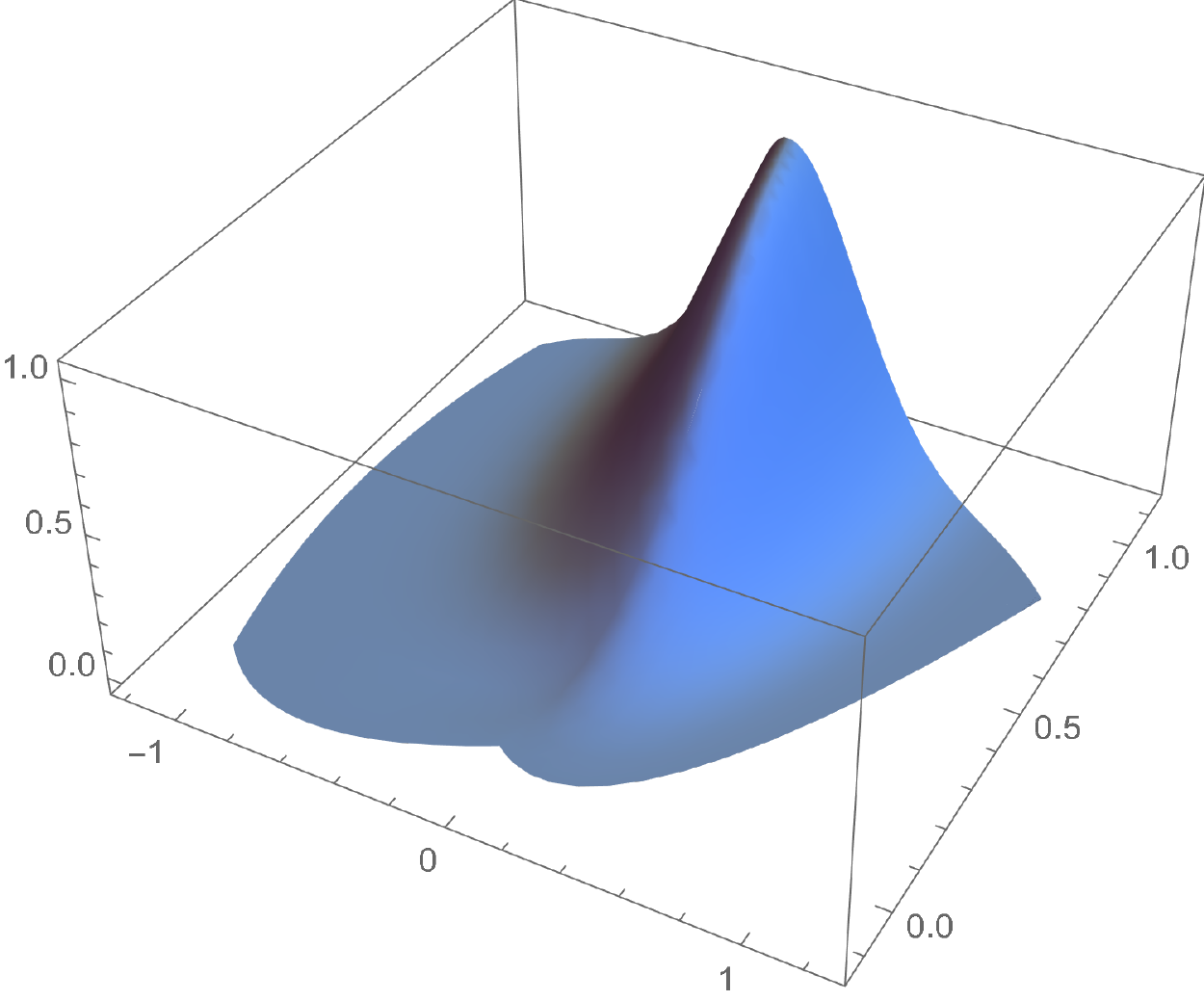}
\includegraphics[width=.24\textwidth]{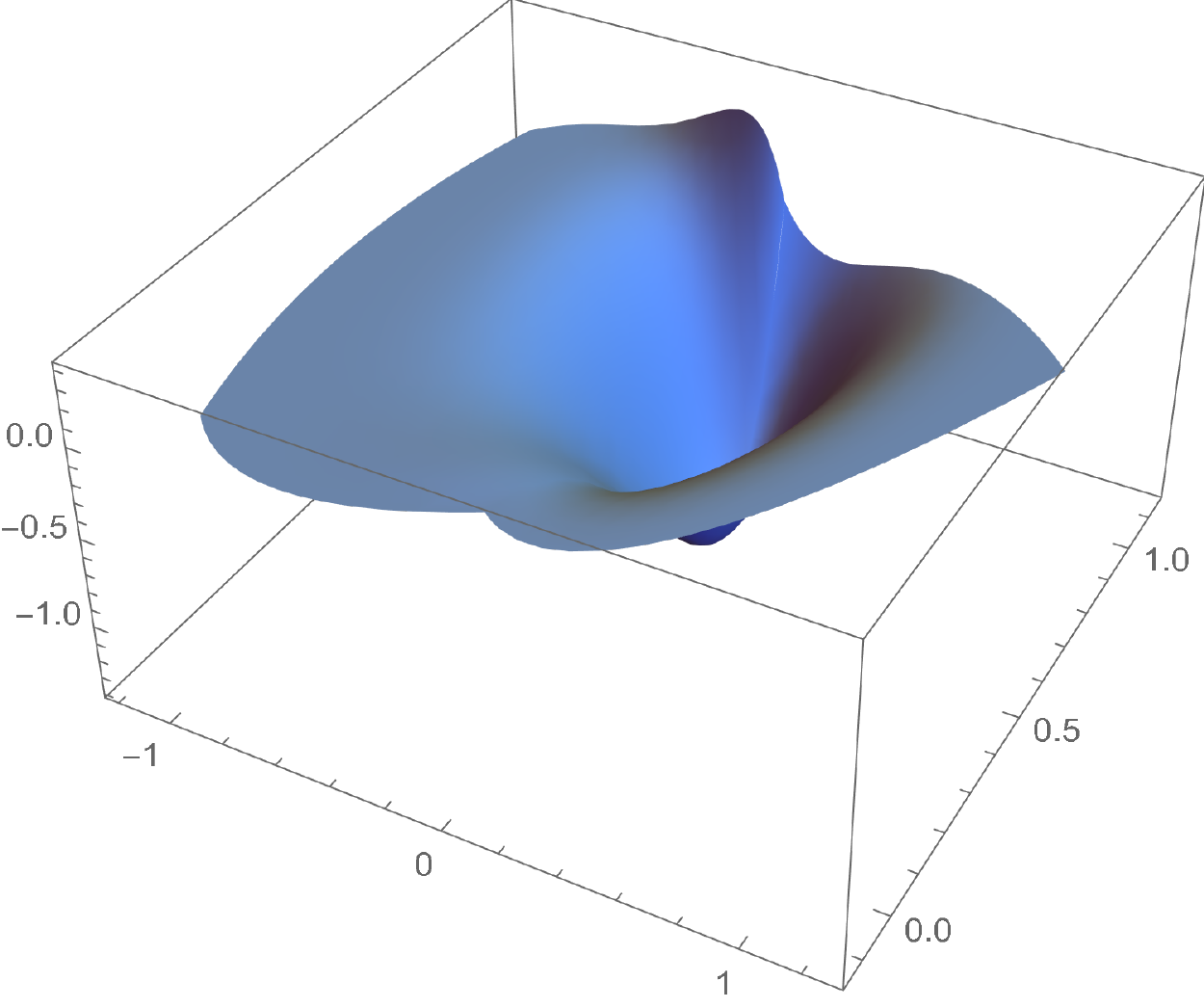}
\includegraphics[width=.24\textwidth]{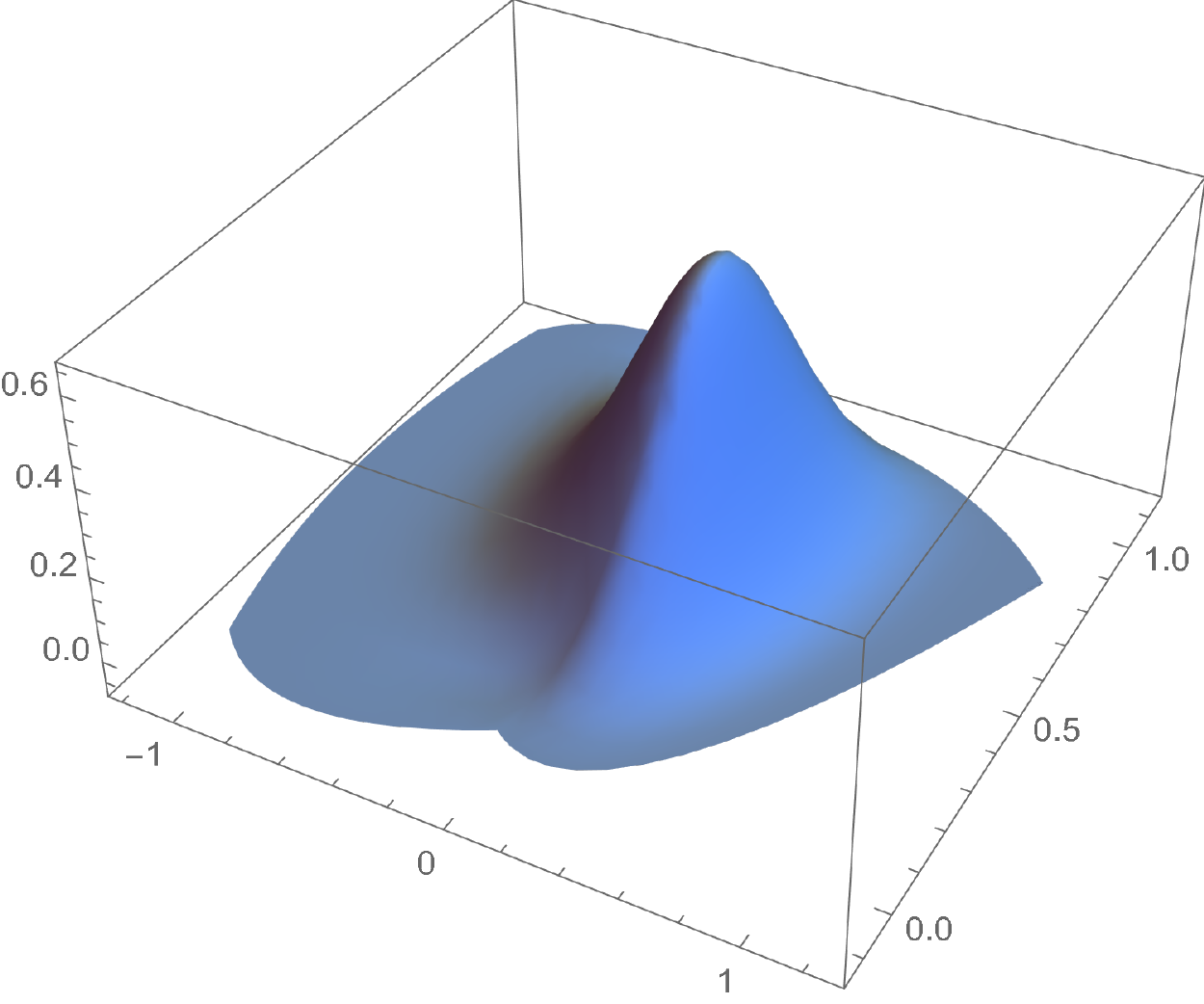}\\
\includegraphics[width=.24\textwidth]{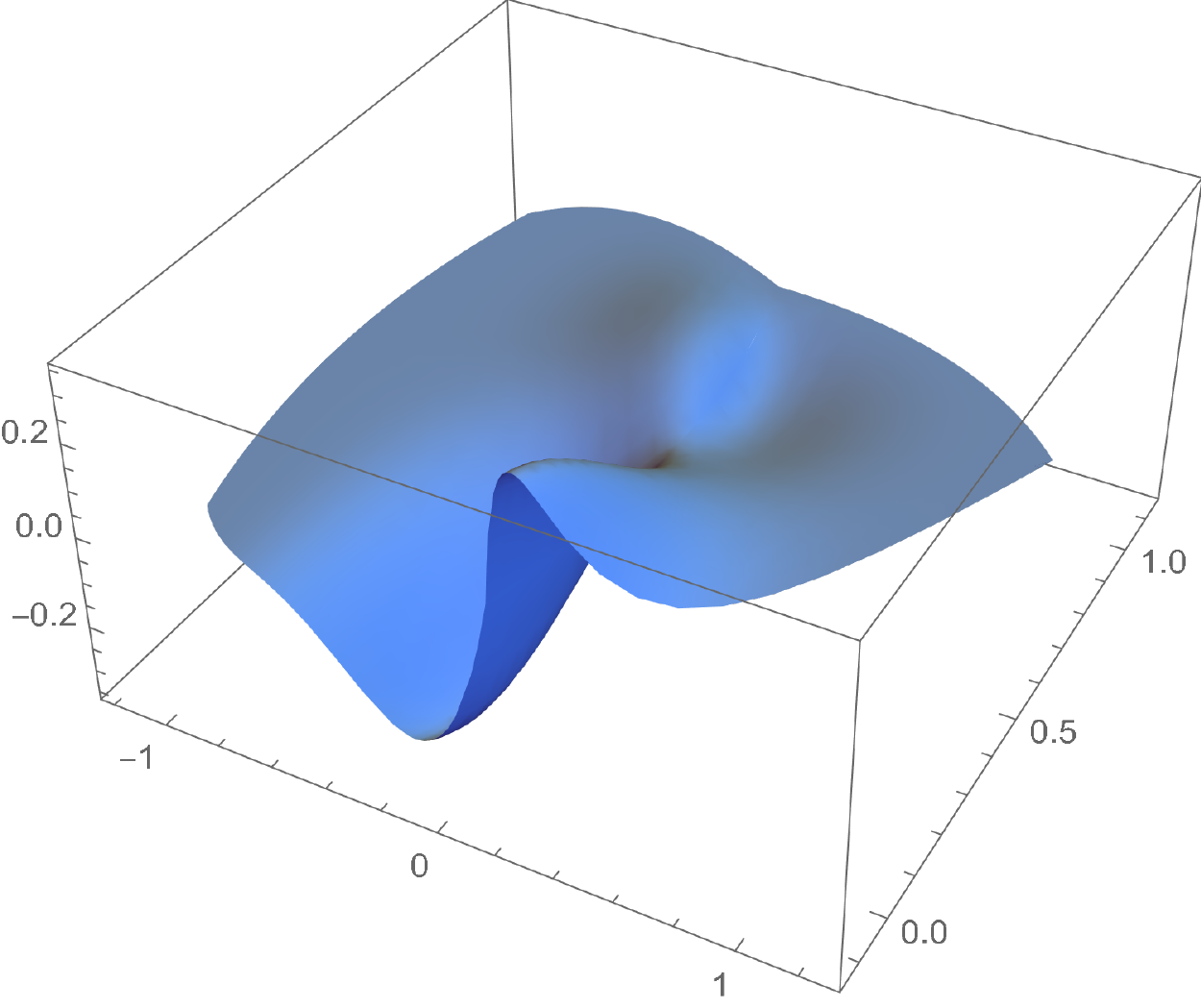}
\includegraphics[width=.24\textwidth]{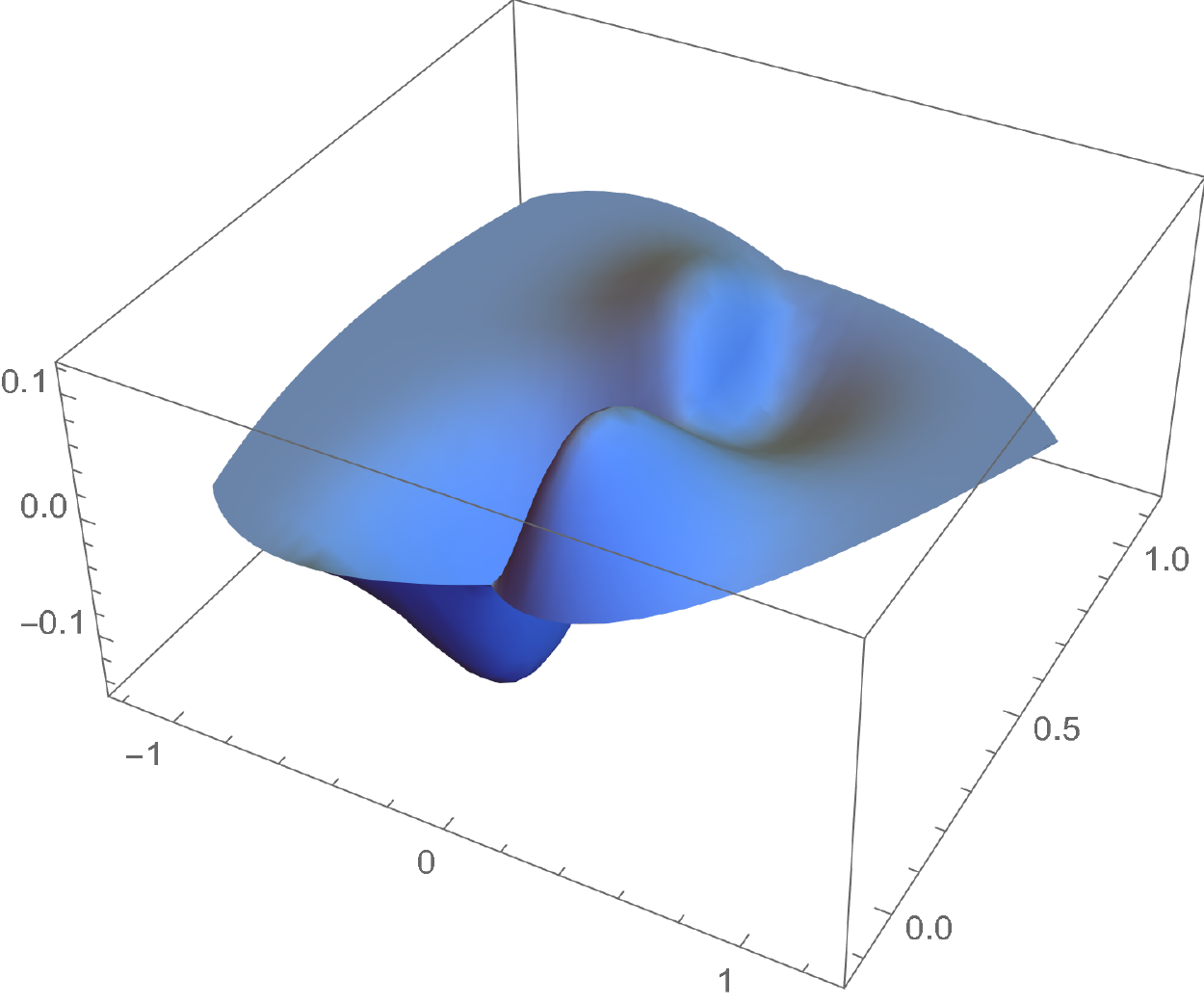}
\includegraphics[width=.24\textwidth]{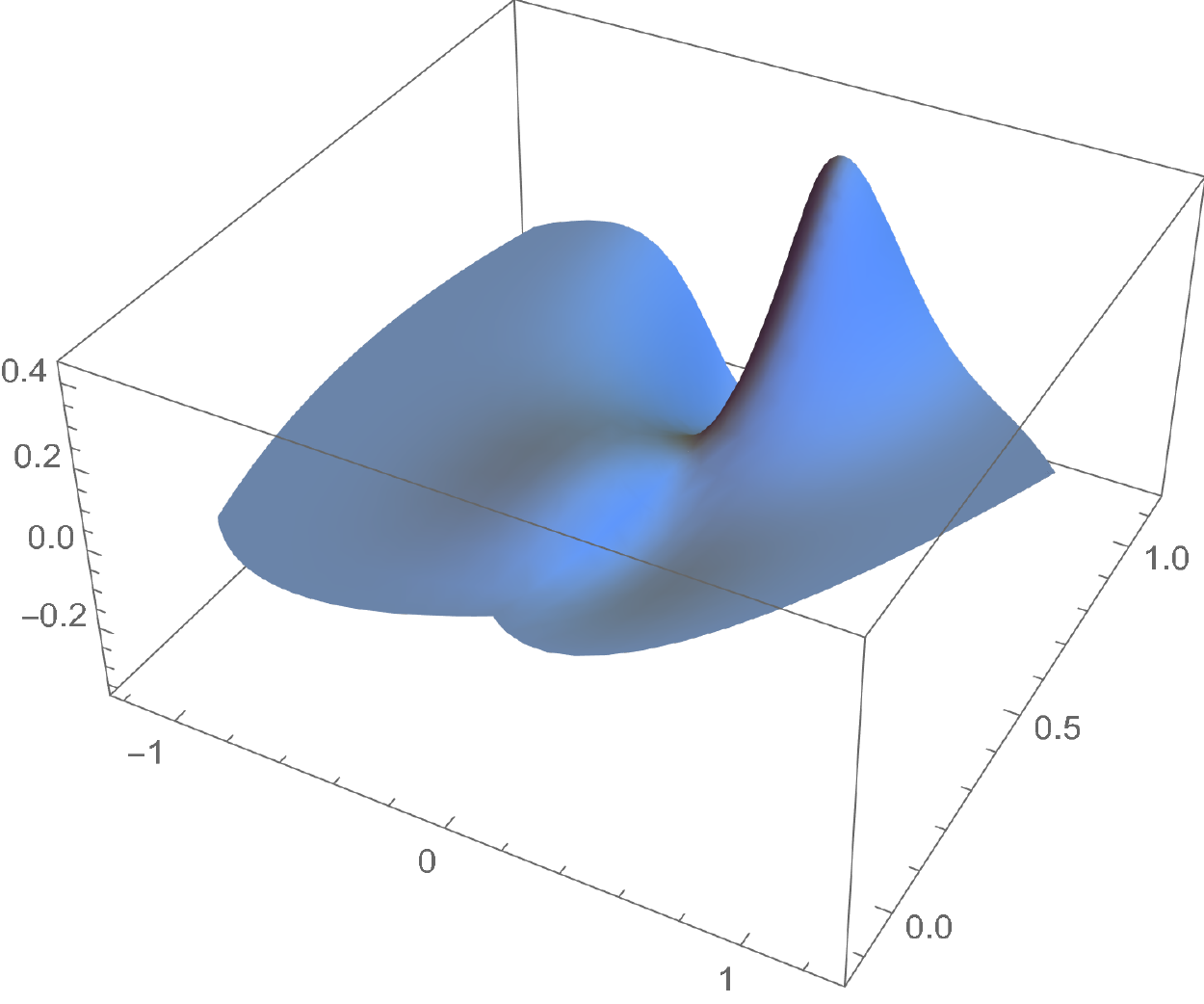}
\includegraphics[width=.24\textwidth]{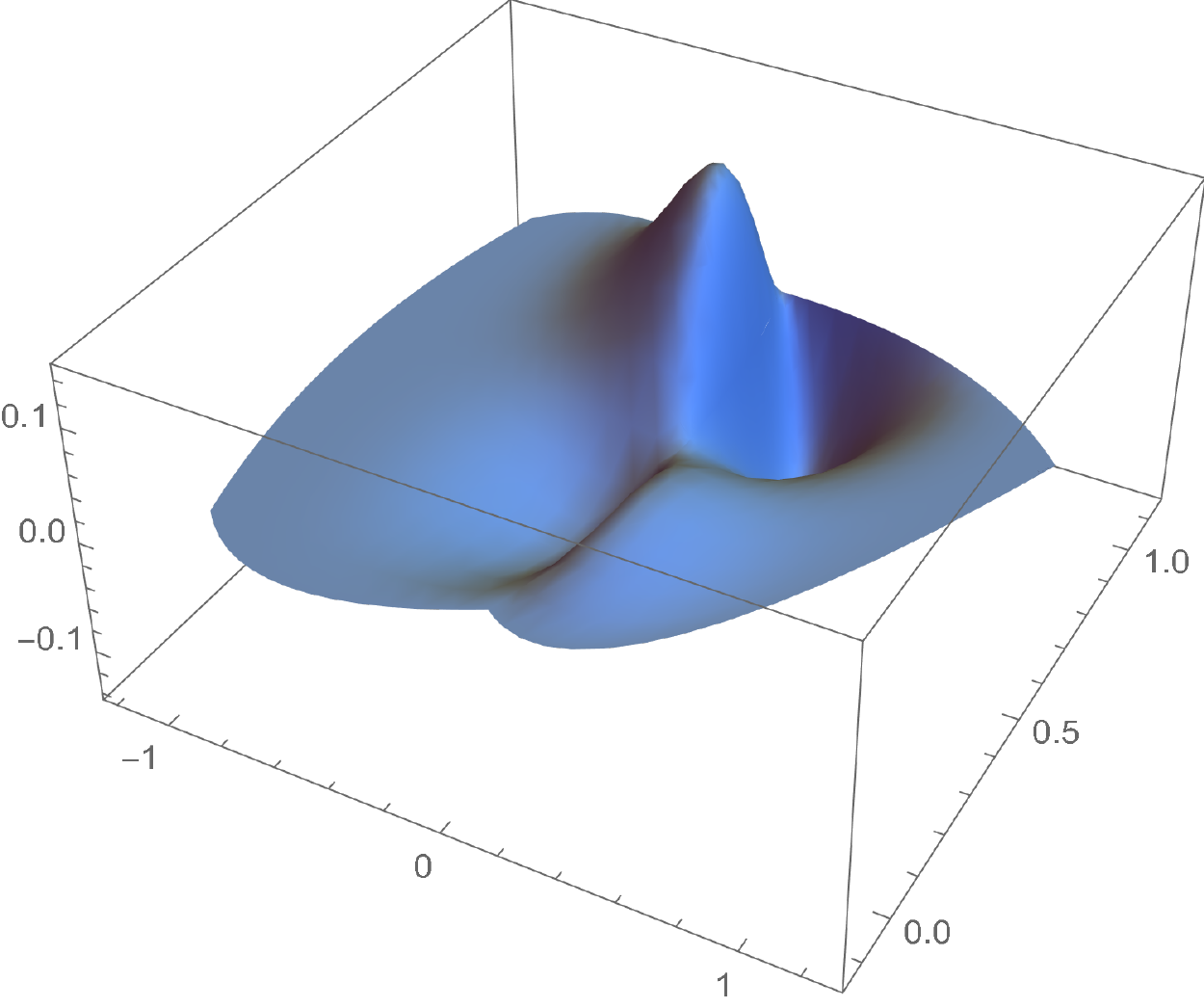}
\includegraphics[width=.24\textwidth]{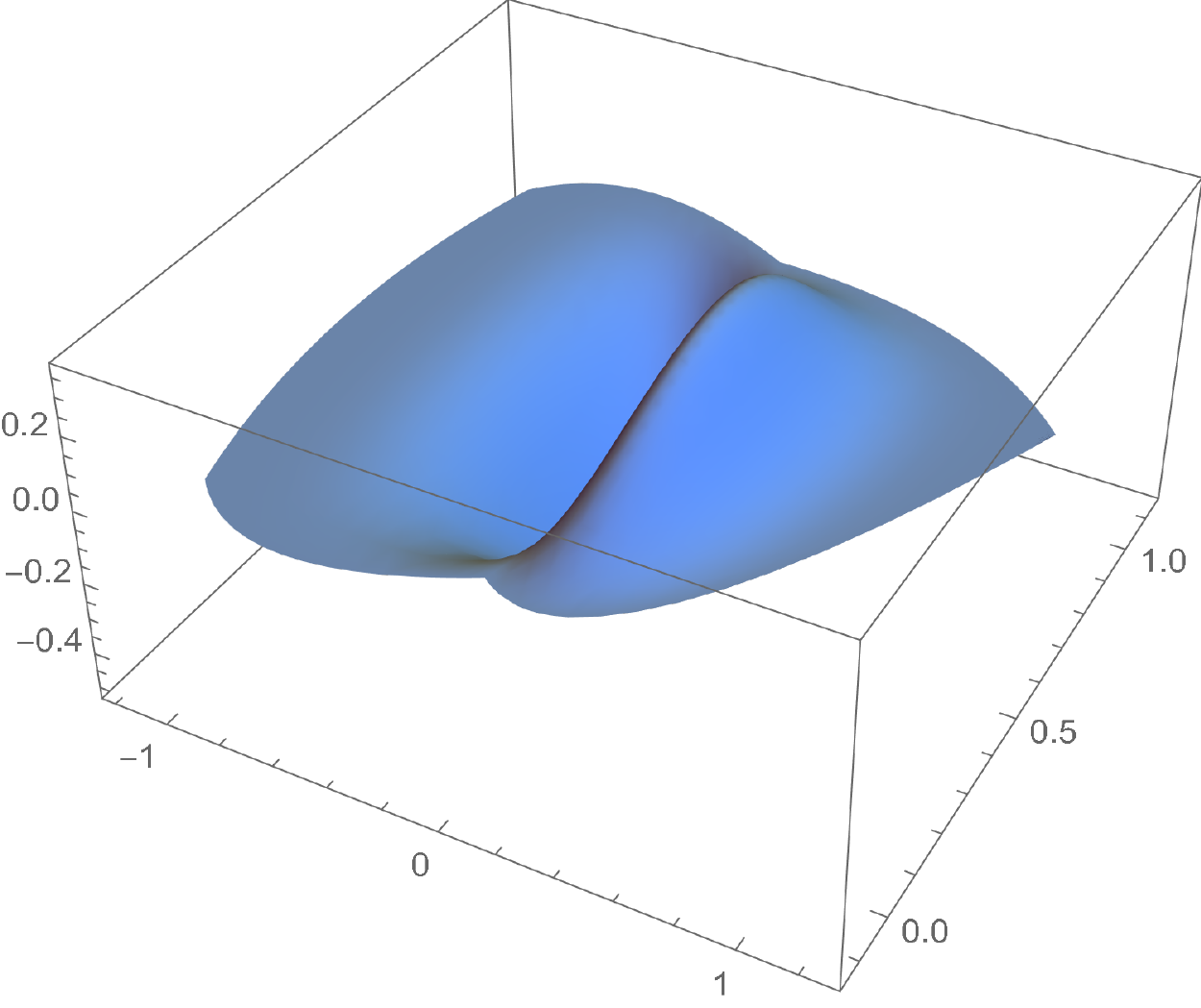}
\includegraphics[width=.24\textwidth]{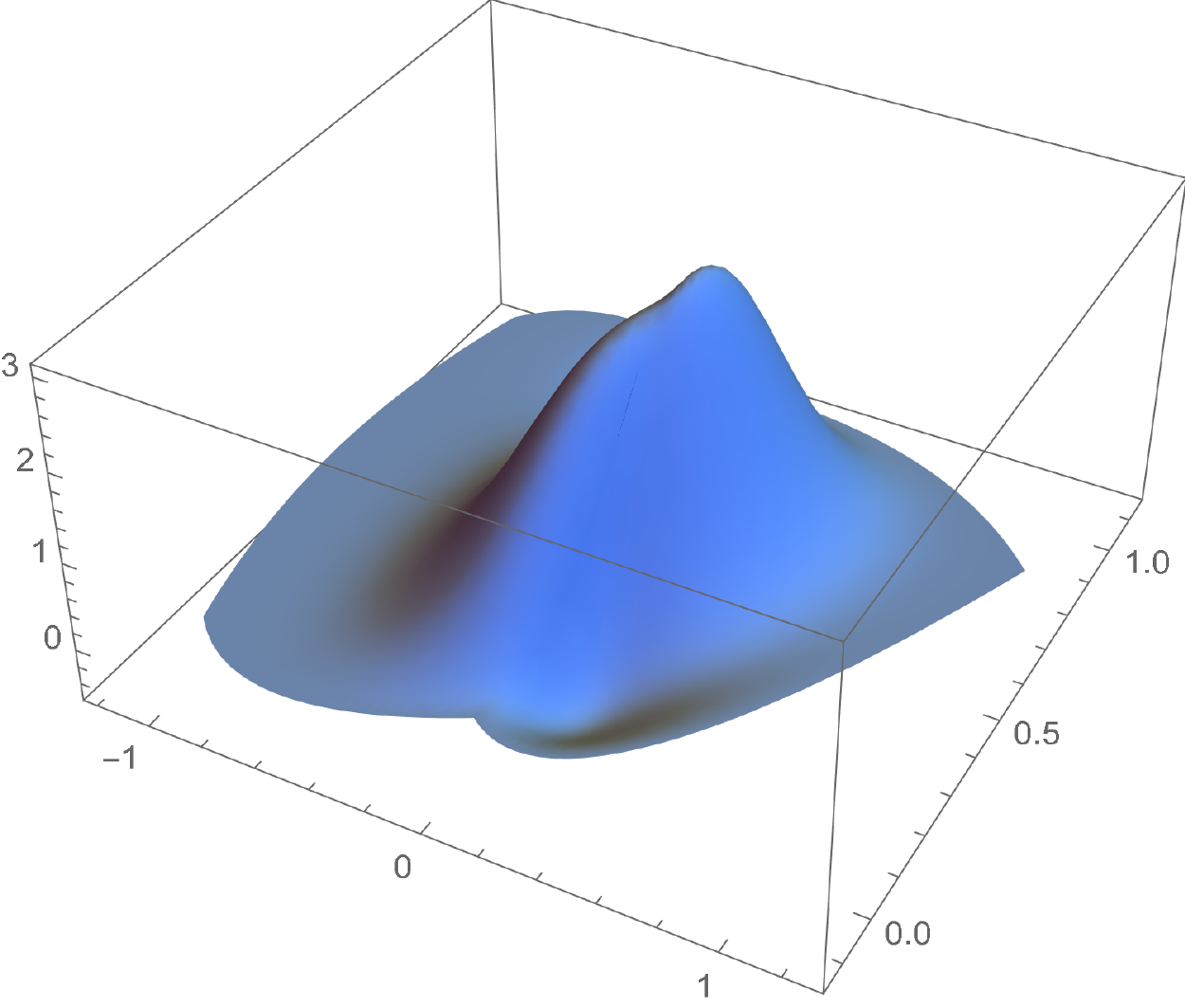}
\includegraphics[width=.24\textwidth]{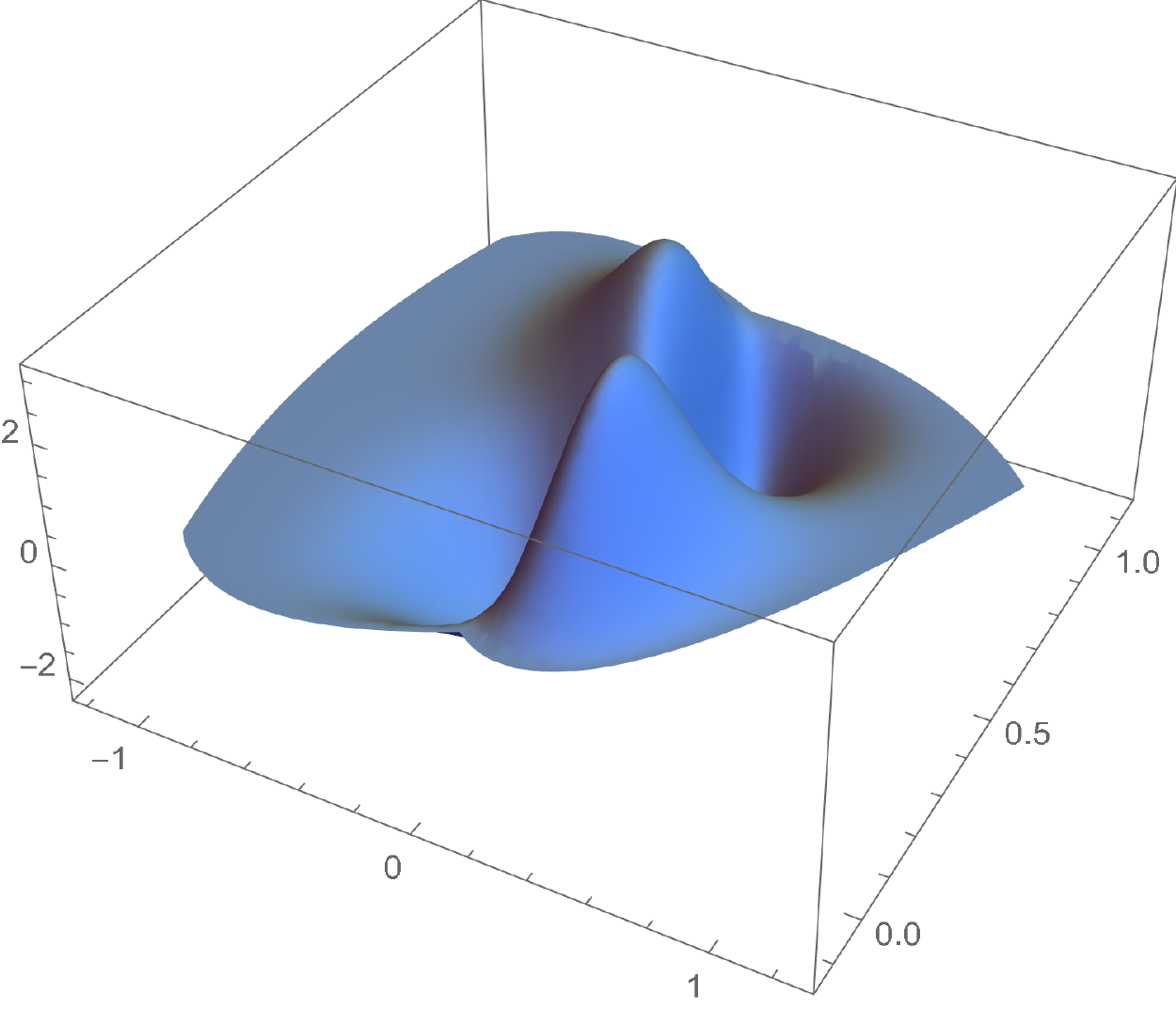}
\caption{Basis functions from Example~\ref{example-basisFunctions} for $\pd=7$.}
\label{fig:example4}
\end{figure}
\end{example}

\begin{example} \label{example-basisFunctions-2}
Choosing a mesh defined by \eqref{example-1-C1And2} and \eqref{example1-values-c} (see Figure~\ref{fig:example2}, left) we have $2\pd$ free parameters. In particular, we get one additional parameter for the construction of $\w$, i.e., $n_\w = d-1$. For $d=6$ we choose the interpolations functionals by \eqref{eq-IntFun-PI-even-d} where the last two functionals are replaced by  $\lambda_{1/4}^{(0)} \w$, $\lambda_{1/2}^{(0)} \w$, $\lambda_{3/4}^{(0)} \w$. The condition number of the collocation matrix is $238.96$, so numerical computations needed to solve the interpolation problem are stable.
\end{example}
 
One may expect that the basis computation is not stable in a configuration that is close to the special case where the dimension changes. However, this is not the case when we consider the parabolic interface. Namely, additional degrees of freedom are obtained if $ {\rm gcd}(\wt{\alpha}_1,\wt{\alpha}_2)$ is a nonconstant polynomial or if $\wt{\alpha}_\ell < 2+\sigma_\ell$, $\ell=1,2$.
This implies that $\pdw > \pd-3$ or equivalently that one can take  $\w^*_{low}$ of degree greater than $\pd-3$. This does not cause any instability when the parameterization is close to the special case. To illustrate this numerically let us construct configurations that are close to the setting from Example~\ref{example-basisFunctions-2} (but which remain generic configurations as in Example~\ref{example-basisFunctions}). Namely, we 
choose $x_1=\frac{9}{10}$, $y_1 = \frac{14}{25}$, $x_2=-\frac{1}{2}+\varepsilon_x, y_2 =\frac{41}{100}+\varepsilon_y$, where $\varepsilon_x,\varepsilon_y$ are random, non-zero rational numbers, where the numerator is selected from the interval $[-100,100]$ and the denominator from $[10^{12},2\cdot 10^{12}]$. We fix the degree to $d=6$ and observe that the condition numbers of the collocation matrices corresponding to interpolation functionals \eqref{eq-IntFun-PI-even-d} are always $40.35$, as in the generic case from Example~\ref{example-basisFunctions}. This shows that the computation of the basis through interpolation is stable in this configuration. Clearly, if we compute in a floating point arithmetic, the greatest common divisor and the degrees of $\wt{\alpha}_\ell$ cannot be determined exactly but only up to
some prescribed precision. Thus also the $C^1$ continuity conditions are satisfied only up to some precision.

\section{Conclusions}
\label{sec:Conclusion}
 
We investigated the $\C{1}$-smooth isogeometric spline space of general polynomial degree~$\pd \geq \delta$ over planar domains partioned into two elements, where each element can be a  B\'{e}zier triangle or a B\'{e}zier quadrilateral of (bi-)degree~$\delta \geq 1$. To fully explore the $\C{1}$-smooth isogeometric spline space, a theoretical framework was developed. It was used to analyze the $\C{1}$-smoothness conditions of the functions across the interface of the two elements and to study the representation of the functions in the neighborhood of the interface, more precisely, to study traces and normal derivatives along the interface. In case of $\delta=2$, i.e., in case of quadratic triangles and biquadratic quadrilaterals, we further provide for all possible configurations of the two mesh elements the exact dimension count as well as a basis construction of the $\C{1}$-smooth isogeometric spline space. The obtained results were demonstrated in detail for several examples of interesting configurations of the two mesh elements. We moreover provide a first study on the stability of the basis computation near a special case. A more exhaustive stability analysis is planned in the future. We also want to derive a algorithm to compute a stable subspace (a subspace of the complete $\C{1}$-smooth space, whose dimension and degree-of-freedom structure is independent of the geometry), for which certain approximation properties can be shown.

This paper is an important preliminary step to analyze the space of $\C{1}$-smooth isogeometric spline functions over a planar mixed (bi-)quadratic mesh composed of multiple triangles and quadrilaterals and to study the local polynomial reproduction properties of such a space. Moreover, the presented work is the basis for the surface case by using at least quadratic triangular and biquadratic quadrilateral surface patches. Beside these two topics for future research, we also plan to use the $\C{1}$-smooth isogeometric spline space to solve fourth order PDEs such as the biharmonic equation, the Kirchhoff--Love shell problem, problems of strain gradient elasticity or the Cahn--Hilliard equation over mixed \mbox{(bi-)quadratic} triangle and quadrilateral meshes.

\section*{Acknowledgments}

This paper was developed within the Scientific and Technological Cooperation ``Smooth splines over mixed triangular and quadrilateral meshes for numerical simulation'' between Austria and Slovenia 2023-24, funded by the OeAD under grant nr. SI 17/2023 and by ARRS bilateral project nr. BI-AT/23-24-018. 

The research of M. Kapl is partially supported by the Austrian Science Fund (FWF) through the project P~33023-N. The research of M.~Knez is partially supported by the research program P1-0288 and the research projects J1-3005 and N1-0137 from ARRS, Republic of Slovenia. The research of J.~Gro\v{s}elj is partially supported by the research program P1-0294 from ARRS, Republic of Slovenia. The research of V.~Vitrih is partially supported by the research program P1-0404 and research projects N1-0296, J1-1715, N1-0210 and J1-4414 from ARRS, Republic of Slovenia. This support is gratefully acknowledged.

\end{document}